% Template for the submission to:
%   The Annals of Statistics [AOS]
%
%%%%%%%%%%%%%%%%%%%%%%%%%%%%%%%%%%%%%%%%%%%%%%
%% In this template, the places where you   %%
%% need to fill in your information are     %%
%% indicated by '???'.                      %%
%%                                          %%
%% Please do not use \input{...} to include %%
%% other tex files. Submit your LaTeX       %%
%% manuscript as one .tex document.         %%
%%%%%%%%%%%%%%%%%%%%%%%%%%%%%%%%%%%%%%%%%%%%%%

\documentclass[aos,preprint]{imsart}

%% Packages
\RequirePackage{amsthm,amsmath,amsfonts,amssymb}
\RequirePackage[authoryear]{natbib} %% uncomment this for author-year bibliography
\RequirePackage[colorlinks,citecolor=blue,urlcolor=blue]{hyperref}
\RequirePackage[dvipdfmx]{graphicx}% uncomment this for including figures

\startlocaldefs
%%%%%%%%%%%%%%%%%%%%%%%%%%%%%%%%%%%%%%%%%%%%%%
%%                                          %%
%% Uncomment next line to change            %%
%% the type of equation numbering           %%
%%                                          %%
%%%%%%%%%%%%%%%%%%%%%%%%%%%%%%%%%%%%%%%%%%%%%%
%\numberwithin{equation}{section}
%%%%%%%%%%%%%%%%%%%%%%%%%%%%%%%%%%%%%%%%%%%%%%
%%                                          %%
%% For Axiom, Claim, Corollary, Hypothezis, %%
%% Lemma, Theorem, Proposition              %%
%% use \theoremstyle{plain}                 %%
%%                                          %%
%%%%%%%%%%%%%%%%%%%%%%%%%%%%%%%%%%%%%%%%%%%%%%
\theoremstyle{plain}
\newtheorem{thm}{Theorem}
\newtheorem{cor}[thm]{Corollary}
\newtheorem{lem}[thm]{Lemma}
\newtheorem{pro}[thm]{Proposition}
%%%%%%%%%%%%%%%%%%%%%%%%%%%%%%%%%%%%%%%%%%%%%%
%%                                          %%
%% For Assumption, Definition, Example,     %%
%% Notation, Property, Remark, Fact         %%
%% use \theoremstyle{remark}                %%
%%                                          %%
%%%%%%%%%%%%%%%%%%%%%%%%%%%%%%%%%%%%%%%%%%%%%%
\theoremstyle{remark}
\newtheorem{rem}{Remark}
\newtheorem{defi}[rem]{Definition}
\newtheorem{exa}[rem]{Example}
\newtheorem{alg}[rem]{Algorithm}
%%%%%%%%%%%%%%%%%%%%%%%%%%%%%%%%%%%%%%%%%%%%%%
%% Please put your definitions here:        %%
%%%%%%%%%%%%%%%%%%%%%%%%%%%%%%%%%%%%%%%%%%%%%%
\newcommand{\lbar}[1]{ \overline{#1} }

\newcommand{\Tr}[1]{ \mathrm{Tr}\left[#1\right]}
\newcommand{\Det}[1]{ \mathrm{Det}\left(#1\right)}
\newcommand{\set}[2]{ \left\{\,#1\,;\,#2\,\right\} }
\newcommand{\cpling}[2]{ \langle #1,#2 \rangle}
\newcommand{\transp}[1]{ #1^{\top}}
\newcommand{\dd}{\,\mathrm{d}}
\newcommand{\E}{\mathbb{E}}
\newcommand{\A}{\mathcal{A}}
\newcommand{\nc}{\newcommand}
\nc{\Pcone}{\mathcal{P}}
\nc{\Graph}{\mathcal{G}}
\nc{\Zsp}{\mathcal{Z}}
\nc{\iu}{i}
\nc{\integer}{\mathbb{Z}}
\nc{\real}{\mathbb{R}}
\nc{\complex}{\mathbb{C}}
\nc{\quat}{\mathbb{H}}
\nc{\K}{\mathbb{K}}
\endlocaldefs

\begin{document}

\begin{frontmatter}
%%%%%%%%%%%%%%%%%%%%%%%%%%%%%%%%%%%%%%%%%%%%%%
%%                                          %%
%% Enter the title of your article here     %%
%%                                          %%
%%%%%%%%%%%%%%%%%%%%%%%%%%%%%%%%%%%%%%%%%%%%%%
\title{Model selection in the space of Gaussian models invariant by symmetry}
%\title{A sample article title with some additional note\thanksref{T1}}
\runtitle{Models invariant by symmetry}
%\thankstext{T1}{A sample of additional note to the title.}

\begin{aug}
%%%%%%%%%%%%%%%%%%%%%%%%%%%%%%%%%%%%%%%%%%%%%%
%%Only one address is permitted per author. %%
%%Only division, organization and e-mail is %%
%%included in the address.                  %%
%%Additional information can be included in %%
%%the Acknowledgments section if necessary. %%
%%%%%%%%%%%%%%%%%%%%%%%%%%%%%%%%%%%%%%%%%%%%%%
\author[A]{\fnms{Piotr} \snm{Graczyk}\ead[label=e1]{piotr.graczyk@univ-angers.fr}},
\author[B]{\fnms{Hideyuki} \snm{Ishi}\ead[label=e2]{hideyuki@sci.osaka-cu.ac.jp}}\thanksref{t2},
\author[C]{\fnms{Bartosz} \snm{Ko{\l}odziejek}\ead[label=e3]{b.kolodziejek@mini.pw.edu.pl}}\thanksref{t3}
\\\and
\author[D]{\fnms{H\'{e}l\`{e}ne} \snm{Massam}\ead[label=e4]{massamh@mathstat.yorku.ca}}\thanksref{t4}
%%%%%%%%%%%%%%%%%%%%%%%%%%%%%%%%%%%%%%%%%%%%%%
%% Addresses                                %%
%%%%%%%%%%%%%%%%%%%%%%%%%%%%%%%%%%%%%%%%%%%%%%
\address[A]{Universit\'e d'Angers, France, \printead{e1}}
\address[B]{Osaka City University, Japan, \printead{e2}}
\address[C]{Warsaw University of Technology, Poland, \printead{e3}}
\address[D]{York University, Canada, \printead{e4}}
\thankstext{t2}{Supported by JSPS KAKENHI Grant Number 16K05174 and JST PRESTO.}
\thankstext{t3}{Supported by grant 2016/21/B/ST1/00005 of the National Science Center, Poland. }
\thankstext{t4}{Supported by an NSERC Discovery Grant.}
\end{aug}

\runauthor{Graczyk, Ishi, Ko{\l}odziejek  and Massam}

\begin{abstract}
	We consider multivariate centered
Gaussian models for the random variable $Z=(Z_1,\ldots, Z_p)$, invariant under  the action of a subgroup of the group of permutations on $\{1,\ldots, p\}$. Using the representation theory of the symmetric group on the field of reals, we derive the 
distribution of the maximum likelihood estimate of the covariance parameter $\Sigma$ and also the analytic expression of the normalizing constant of the Diaconis-Ylvisaker conjugate prior for the precision parameter $K=\Sigma^{-1}$. We can thus perform Bayesian model selection in the class of complete Gaussian models invariant by the action of a subgroup of the symmetric group, which we could also call complete RCOP models. We illustrate our results with a toy example of dimension $4$ and several examples for selection within cyclic groups, including a high dimensional example with $p=100$.
\end{abstract}

\begin{keyword}[class=MSC2020]
\kwd[Primary ]{62H99}
\kwd{62F15 }
\kwd[; secondary ]{20C35}
\end{keyword}

\begin{keyword}
\kwd{colored graph}
\kwd{conjugate prior}
\kwd{covariance selection}
\kwd{invariance}
\kwd{permutation symmetry}
\end{keyword}

\end{frontmatter}

%%%%%%%%%%%%%%%%%%%%%%%%%%%%%%%%%%%%%%%%%%%%%%
%%%% Main text entry area:

\section{Introduction}
\subsection{Motivations and applications}

Let $V=\{1,\ldots,p\}$ be a finite index set and let $Z=(Z_1,\ldots, Z_p)$ be a multivariate random variable following a centered Gaussian model $\mathrm{N}_p(0,\Sigma)$.
Let $\mathfrak{S}_p$ denote the symmetric group
on $V$, that is, the group of all permutations on $\{1,\ldots,p\}$ and let $\Gamma$ be a subgroup of $\mathfrak{S}_p$. A centered Gaussian model is said to be invariant under the action of $\Gamma$ if for all $g\in \Gamma$, $g\cdot\Sigma\cdot\transp{g}=\Sigma$ 
(here we identify a permutation $g$ with its permutation matrix).

Given $n$ data points $Z^{(1)},\ldots,Z^{(n)}$ from a Gaussian distribution, our aim  in this paper is to do Bayesian model selection within the class of models invariant by symmetry, that is, invariant under the action of some subgroup $\Gamma$ of $\mathfrak{S}_p$ on $V$. Given the data, our aim is therefore to identify the subgroup $\Gamma\subset \mathfrak{S}_p$ such that the model invariant under $\Gamma$ has the highest posterior probability. We achieve this goal by constructing a Markov chain on the space of models and using the Metropolis-Hastings algorithm.

There are many alternative ways of doing model search in modern statistics on big data sets, both frequentist and Bayesian. Bayesian model selection methods (cf. \cite[Ch.10]{Ghosal}) are widely used in practice, thanks to the possibility of using a prior knowledge on the model and to their  rigorous  mathematical bases.
Moreover, in Bayesian approach the Metropolis-Hastings algorithm is naturally applicable and generally
accepted. 

Our work can be viewed as a special case of  colored graphical Gaussian models (the underlying graph is complete so we do not impose conditional independence structure), that is, statistical graphical models with additional symmetries (equality constraints on the precision or correlation matrix). Such models were
introduced into modern exploratory analysis of data in the seminal paper
\cite{HL08},
as a powerful  tool of dimension reduction in unsupervised learning, cf. \cite[Chapter 9.8]{NewBook}.
A preponderant role is given in  \cite{HL08} to the RCOP models studied in our paper,
thanks to their most tractable structure and interpretability through symmetries among the variables. One of motivations of this paper is to address the task stated in \cite[p. 1025]{HL08}:
\emph{For the models to become widely applicable, it is mandatory to develop algorithms for model identification which are robust, reliable and transparent}.

For high-dimensional data, Gaussian models which
have symmetries and are graphical allow statisticians to reduce the dimension of a model. In genetics, such models can be used to identify genes having the same function or groups of genes
having similar interactions. Below we mention some of the studies in which our model could find potential application.

In \cite{HL08}, gene expression signatures for p53 mutation status in $58$ breast cancer samples consisting of $150$ genes were investigated and interpreted. We apply our algorithm to this data in the Supplementary material, see Section 6 in \cite{GIKM_SM}. We claim that our model
selection procedure can be used as an exploratory tool. Assuming that the
variables are all on some common scale, the proposed algorithm can be run
to look for potential hidden symmetries between the variables.

It is worth to underline that one of the characteristics of our model is the lack of scale invariance. We point out below that there are many examples where our model can still be applied.
E.g. the data from gene  expression  are on the  same scale in the sense that they are results of experiments of the same type, measured in the same gauges. Similar situation appears  generally for omic data sets in proteomics and metabolomics. For more details see e.g. the monograph  \cite{med4}.
In \cite[Section 6.2 TCGA Breast Cancer Data]{med5},
genetic information in tumoral tissues DNA that are involved in gene expression are measured from messenger sequencing 	by the RNASeq method and they are all on the same scale, as they are the numbers of transcripts in a sample.
In clinical epidemiology and medicine, one often uses scales  combined into scores to classify outcome, see e.g. \cite{med1,med2}. 
Range of values of such scores are often similar, even though not formally tested
statistically to be so. 
In the paper \cite{med3} the normalization or non-normalization of data did not influence	their statistical interpretation.

Moreover, we argue that it is natural to expect certain symmetries in the data from gene expression. Namely, expression of a given gene is triggered by binding the transcription factors to the gene transcription factor binding sites. The transcription factors are the proteins produced by other genes, say regulatory genes. In the gene network there are often many genes triggered by the same regulatory genes and it makes sense to assume that their  relative expressions depend on the abundance of proteins of the regulatory genes (i.e. gene expressions) in a similar way. 

In \cite{GaoM15},  $12\,625$ neutrophil gene expressions were monitored 
with imposed symmetry constraints to  the  graphical  modeling. The paper \cite{Hel20} contains a study
of  the structure of colored graphs applied to a flow cytometry data set on signaling networks of human immune system cells, which consists of $7\,466$ measurements on $11$ phosphorylated proteins.

A very recent application  of graphical  models  with  symmetries to fMRI real data on brain networks is proposed
in \cite{Ro21}.
An impressive number of  recent applications of 
graphical models to real data analysis is listed in the recent monograph
\cite[Chapters 19,20,21] {NewBook} and includes
genetics, genomics,  molecular systems biology and
forensic analysis, cf. also the books \cite{Ro2017}
for medical and  \cite{Li09} for image data applications.

Finally, let us mention that colored graphical models provide interesting examples of
exponential algebraic varieties and
algebraic exponential families, e.g. Toeplitz matrices
\cite{Mi16};  see also \cite{DaMa21}. The recent algebro-geometric approach to graphical models and Gaussian Bayesian networks is being developed intensely \cite[Chapter 3]{NewBook}.

\subsection{Contribution of the paper and relations to previous work}
In this subsection we carefully describe and position 
this paper  in the context of previous research.

Theory of invariant normal models (with the so-called lattice conditional independencies \cite{AM98,Ma00}, which are not considered in the present paper) was developed by the Danish school. History regarding this subject is nicely presented in \cite{AM98}, where the reader can also find references to earlier works dealing with particular symmetry models such as, for example, the circular symmetry model of \cite{OP69} that we will consider further (Section \ref{practical}). 
Among others, Andersson, Br\o ns, Jensen, Madsen and Perlman developed a fairly complete theory of MLE $\hat{\Sigma}$ of the covariance matrix in invariant normal models, however, the problems considered in our paper are very different.

These works were concentrating on the 
derivation of statistical properties of the maximum likelihood estimate of $\Sigma$ and on testing the hypothesis that models were of a particular type.
 In particular, to the best of our knowledge the Danish school never considered 
any model search in the context of invariant normal models. When the model space is very big (and this is the usual case of our framework), then it is impossible to perform simultaneous tests for all possible models. {Despite the computation problems, there is also even bigger issue due to multiple comparisons problem.}

Just like the classical papers mentioned above,  the fundamental algebraic tool we use in this work is the irreducible decomposition theorem for the matrix representation of 
the group $\Gamma$,
which in turn means that, through an adequate change of basis, any matrix $X$ in $\Zsp_{\Gamma}$, the space of symmetric matrices invariant under the subgroup $\Gamma$ of $\mathfrak{S}_p$, can be written in a block diagonal form. The following result is a reformulation of an observation made in \cite[4.6-4.8]{A75}. 
\begin{thm}\label{thm:intro}
	Fix a permutation subgroup $\Gamma\subset\mathfrak{S}_p$. Then, there exist constants $L\in\mathbb{N}$, $(k_i, d_i, r_i)_{i=1}^L$ and orthogonal matrix $U_\Gamma$ such that if $X\in\mathcal{Z}_\Gamma$, i.e. $X\in\mathrm{Sym}(p;\real)$ and
	\[
	X_{ij}=X_{\sigma(i)\sigma(j)}\qquad(\sigma\in\Gamma,\, i,j\in\{1,\ldots,p\}),
	\]
	then
	\begin{equation}
		\label{genform}
		X
		= U_\Gamma\cdot
		\begin{pmatrix} 
			M_{\mathbb{K}_1}(x_1) \otimes I_{k_1/d_1} & & & \\
			& M_{\mathbb{K}_2}(x_2) \otimes I_{k_2/d_2} & & \\
			& & \ddots & \\
			& & & M_{\mathbb{K}_L}(x_L) \otimes I_{k_L/d_L} 
		\end{pmatrix}\cdot \transp{U_\Gamma},
	\end{equation}
	where $M_{\mathbb{K}_i}(x_i)$ is a real matrix representation of 
	an $r_i \times r_i$ Hermitian matrix $x_i$ with entries in  $\mathbb{K}_i=\real, \complex$ or $\quat$, $i=1,\ldots, L$,
	and $A \otimes B$ denotes the Kronecker product of matrices $A$ and $B$. 
\end{thm}
Elements of $(k_i, d_i, r_i)_{i=1}^L$  are integer constants called structure constants that we will define later. At this point we note that $k_i/d_i$ are also integers and $d_i = \dim_{\real} \mathbb{K}_i \in\{1,2,4\}$. The mappings $M_{\mathbb{K}_i}\colon \mathrm{Herm}(r_i;\mathbb{K}_i)\to \mathrm{Sym}(d_i r_i;\real)$ are defined in Section \ref{sec:Jordan}.
As was already observed in \cite{Je88}, the space $\Zsp_\Gamma$ equipped with a Jordan product and trace inner product forms a Euclidean Jordan algebra. Thus, \eqref{genform} can be understood as a decomposition of $\mathcal{Z}_\Gamma$ into Euclidean simple Jordan algebras. Theorem \ref{thm:intro} is the existence result and actual computation of structure constants and the orthogonal matrix $U_\Gamma$ is in general a hard technical task. A complete proof of Theorem \ref{thm:intro} can be found in the Supplementary material \cite{GIKM_SM}. We tried to ensure that our arguments are concrete and should be easier to understand for the reader who is not familiar with representation theory. 

The main novel results of the paper are

\indent  \textbf{(a)} new Bayesian model selection procedure within Gaussian models invariant by a permutation subgroup, Section \ref{selection:cyclic},\\
\indent \textbf{(b)} explicit formulas for Gamma integrals, normalizing constants of densities of Diaconis-Ylvisaker conjugate prior for $K$ and of the MLE of $\Sigma$ on $\Pcone_{\Gamma}$, Bayes factors, which are necessary for performing a), Theorems \ref{thm:IntegralPhi} and \ref{thm:IntegralNoPhi} in Section \ref{gammaintegral},\\
\indent \textbf{(c)} efficient algorithm for finding a decomposition \eqref{genform} when the subgroup $\Gamma$ is cyclic, Theorems \ref{thm:cyclic1} and \ref{thm:cyclic2} in Section \ref{cyclic:basic},\\
\indent \textbf{(d)} simulations that visualize the performance of the method in low and high dimensional examples,  Section \ref{selection:general}, Section \ref{practical} and Section 4 of the Supplement \cite{GIKM_SM}.

\textbf{Ad (a). } 
We are aware of three papers which concern model selection in the space of colored graphical model, namely \cite{Ge11,Hel18b,Hel20}.

In \cite{Ge11} the author used the lattice structure of the colored graphical model classes and applied Edwards–Havr\'anek model selection procedure to $p=4$ and $p=5$ examples, admitted that applying this method to high-dimensional problems requires additional work.

Both papers \cite{Hel18b} and \cite{Hel20} used Bayesian methods and allow for model selection in the space of RCON models (which is a superclass of RCOP models introduced in \cite{HL08}) and for arbitrary graphs describing conditional independencies in a vector. Such generality comes at a certain cost: as the authors were not able to compute normalizing constants for such general models, they had to approximate these constants or bypass the problem (which comes with a significant increase in computational complexity):  we quote a few lines from these articles that describe the situation well. 
\begin{itemize}
	\item \cite{Hel18b}: \textit{However, just as sampling schemes for the $G$-Wishart distribution are not recommended for computation of (normalizing constant) $I_G(\delta,D)$  and model selection in higher dimensions, our sampling scheme is not recommended for computing (normalizing constant) $I_G(\delta,D)$ in high dimensions.}
	\item \cite{Hel20}: \textit{The model $G^\ast$ with an additional edge is then compared to the current model $G$ using the Bayes factor (...) which itself is computed with the help of the double
		reversible jump MCMC algorithm.  (...) We thus avoid computing these quantities which are the usual computational stumbling blocks in graphical Gaussian model selection.}
\end{itemize}
Our approach to the Bayesian model selection is much simpler as we were able to compute normalizing constants of Diaconis-Ylvisaker conjugate priors for $K$. 

\textbf{Ad (b). }
We note that a general form of a density of the MLE under our assumptions was already written in \cite{A75} and in more explicit form in \cite{AM98}.
However, an explicit expression for the normalizing constant of density of $\hat{\Sigma}$ or Diaconis-Ylvisaker conjugate prior  was not the object of interest of the Danish school
and it is crucial for the Bayes paradigm and the Bayesian model selection.

Still, there are certain results in their numerous works that can be compared with our formulas. In particular, \cite[Eq. (A.4)]{AM98} gives a formula for $\mathbb{E}[\mathrm{Det}(\hat{\Sigma})^\alpha]$ which is consistent with our results. 
Indeed, after substitution of $(d_i, n\,k_i/d_i, r_i)_i$ for $(d_\mu,n_\mu,p_\mu)_\mu$, the right hand side of their formula coincides with $2^{\alpha p}n^{-\alpha p} \Gamma_{\Pcone_\Gamma}(\alpha+n/2) /\Gamma_{\Pcone_\Gamma}(n/2)$ in our notation (see Theorem \ref{thm:IntegralPhi}). 
Further, in \cite[Section 8]{ABJ83}  explicit formula for normalizing constants of the density of eigenvalues of $\hat{\Sigma}$ is given. However, as distribution of eigenvalues of a random matrix does not determine the distribution of this matrix, our formulas do not follow from these results. 

In some very special cases, normalizing constants for Diaconis-Ylvisaker conjugate prior are given in \cite{Hel18b}.

\textbf{Ad (c).} In order to compute normalizing constants in our model, one needs to know explicit decomposition \eqref{genform}, that is, the structure constants and the orthogonal matrix $U_\Gamma$. The same issue can be seen in \cite[Theorem 1]{Je88}, which is the existence result (like our Theorem \ref{thm:intro}) and does not give the answer how should one proceed to find such decomposition.  In order for this theory to be applied, we proved that when $\Gamma$ is a cyclic subgroup, then we can efficiently find explicit decomposition  \eqref{genform} for arbitrary $p$. 
This practical aspect of our work has not been addressed before.  
To our knowledge, our paper is the first one to identify a non-trivial class of subgroups for which all objects can be calculated explicitly. 

For a moment, let us consider the more general situation of Gaussian graphical models with conditional independence structure encoded by a non complete graph $G$. Then one can introduce symmetry restrictions (RCOP) by requiring that the precision matrix $K$ is invariant under some subgroup $\Gamma$ of $\mathfrak{S}_p$. However, when $G$ is not complete, not all subgroups are suited to the problem. In such cases, one has to require that $\Gamma$ belongs to the automorphism group $\mathrm{Aut}(G)$ of $G$. If a graph $G$ is sparse, then $\mathrm{Aut}(G)$ may be very small and it is natural to expect that the vast majority of subgroups of $\mathrm{Aut}(G)$ are actually cyclic. Moreover, finding the structure constants for a general group is much more expensive and in some situations it may not be worth to consider the problem in its full generality.
We consider our work as a first step towards the rigorous analytical treatment of Bayesian model selection in the space of graphical Gaussian models invariant under the action of $\Gamma\subset \mathfrak{S}_p$ when conditional independencies are allowed.

Moreover, we offer here a new heuristic approach to colored graphical models using our ``full graph'' approach. It was already observed in \cite{HL08} that the color pattern of the covariance matrix and the precision matrix are the same (i.e. they belong to the space $\mathcal{Z}_\Gamma$). The same applies to the off-diagonal elements of the partial correlation matrix. Our procedure allows one to find the color pattern of the covariance matrix. Since our model does not suppose any preliminary conditional independence structure, the corresponding graph is complete and there are no zeros in the partial correlation matrix. However, if the true graph is not complete, it is natural to expect from the model that similar entries of the partial correlation matrix (in particular those which are close to $0$) are colored in the same way. Thus, to recover the true graph we may threshold the values of the partial correlation matrix. More precisely, we choose a threshold $\alpha>0$ and we construct a colored graph $G$ by maintaining the color pattern previously found and requiring that for $i\neq j$,
\[
i\sim j \qquad	\mbox{if and only if }\qquad\frac{|k_{ij}|}{\sqrt{k_{ii}k_{jj}}}>\alpha,
\] 
where $K=(k_{ij})_{i,j}$ is the precision matrix.
Resulting graph $G$ is in general not complete and the corresponding space of admissible covariance matrices is still invariant under the action of the subgroup found by our procedure; thus we obtain a RCOP model. 
We applied this approach to a real data example in Section 4 of the Supplementary material. 

%Finally, we expect the statistical interpretability of cyclic models to be easier than that of general groups.

There are also several recent papers which use a version of Theorem \ref{thm:intro}. The subject of \cite{STW16} is estimation of complex covariance matrices in complex random vectors in non-Gaussian models invariant under the action of a fixed permutation subgroup, see also \cite{IEE2}. We remark that the argument of \cite{STW16} is based on representation theory over complex number fields, and as was noticed by them, the fundamental structure theorem is much simpler than Theorem \ref{thm:intro}  because of the difference between the representation theory over $\complex$ and $\real$.  In \cite{Sh13} the authors consider the real case and sub-Gaussian model for which they establish rates of convergence of an estimator of $\Sigma$, empirical covariance matrix regularized by the action of a known permutation subgroup.

\subsection{Outline of the paper}

Let us consider the following example, which shows how Theorem \ref{thm:intro} works.
\begin{exa}
	\label{ex1}
	For $p=3$ and $\Gamma=\mathfrak{S}_3$, the space of symmetric matrices $X$ invariant under $\Gamma$, that is, such that $X_{ij}=X_{\sigma(i)\sigma(j)}$ for all $\sigma\in \Gamma$, is
	\[\Zsp_{\Gamma} = \set{ \begin{pmatrix}
			a & b & b \\ b & a & b \\ b & b & a
	\end{pmatrix}}{a,b\in\real}.\] 
	
	The decomposition \eqref{genform} yields
	$U_\Gamma := \begin{pmatrix} v_1, & v_2, & v_3 \end{pmatrix} \in \mathrm{O}(3)$ with
	$$
	v_1 := \begin{pmatrix} 1/\sqrt{3} \\ 1/\sqrt{3} \\ 1/ \sqrt{3} \end{pmatrix}, \quad
	v_2 := \begin{pmatrix} \sqrt{2/3} \\ - 1/\sqrt{6} \\ -1/\sqrt{6} \end{pmatrix}, \quad
	v_3 := \begin{pmatrix} 0 \\ 1/\sqrt{2} \\ - 1 \sqrt{2} \end{pmatrix},
	$$
	and
	$$
	\begin{pmatrix} a & b & b \\ b & a & b \\ a & a & b \end{pmatrix} 
	=U_\Gamma \cdot\begin{pmatrix} a + 2b & & \\ & a-b & \\ & & a-b \end{pmatrix}\cdot \transp{U_\Gamma},
	$$
	Here $L=2$, $k_1/d_1=1$, $k_2/d_2=2$, $\K_1=\K_2=\real$, $d_1=d_2=1$. 
\end{exa}

We see immediately in the example above that, following the decomposition \eqref{genform}, the trace $\Tr{X}=a+2b+2(a-b)$ and  the determinant $\Det{X}=(a+2b)(a-b)^2$ can be readily obtained. Similarly, using \eqref{genform} allows us to easily obtain  $\Det{X}$ and $\Tr{X}$  in general. 

In Section \ref{gammaintegral}, we will see that having the explicit formulas for
$\Det{X}$ and $\Tr{X}$, in turn, allows us to derive the
analytic expression of
the Gamma function on $\Pcone_{\Gamma}=\mathcal{Z}_\Gamma\cap \mathrm{Sym}^+(p;\real)$,  defined as
\begin{align*}
\Gamma_{\Pcone_\Gamma}(\lambda):=\int_{\Pcone_\Gamma} \Det{X}^\lambda e^{-\Tr{X}} \varphi_\Gamma(X)\dd X,
\end{align*}
where $\varphi_\Gamma(X)\dd X$ is the invariant measure on $\Pcone_\Gamma$
(see Definition \ref{def:phi} and Proposition \ref{pro:phi}) 
and $\dd X$ denotes the Euclidean measure on 
the space $\Zsp_\Gamma$ with the trace inner product. 

With our results,  we can derive  the analytic expression of the normalizing constant $I_{\Gamma}(\delta,D)$ of the Diaconis-Ylvisaker conjugate prior on $K=\Sigma^{-1}$ with density, with respect to the Euclidean measure on $\Zsp_\Gamma$, equal to
\[
f(K;\delta,D)= \frac1{I_{\Gamma}(\delta,D)} {\Det{K}^{(\delta-2)/2} e^{- \tfrac12 \Tr{K \cdot D}} }
{\bf 1}_{\Pcone_{\Gamma}}(K)
\]
for appropriate values of the scalar hyper-parameter $\delta$ and the matrix hyper-parameter $D\in \Pcone_{\Gamma}$. By analogy with the $G$-Wishart distribution, defined in the context of the graphical Gaussian 
models, 
Markov with respect to an undirected graph $G$ on the cone $P_G$ of positive definite matrices with zero entry $(i,j)$ whenever there is no edge between the vertices $i$ and $j$ in $G$,
(see \cite{NewBook}), 
we can call the distribution
with density $f(K;\delta,D)$,
the RCOP-Wishart (RCOP is the name coined in \cite{HL08} for graphical
Gaussian models with restrictions generated
by permutation symmetry).
It is important to note here that if $\Sigma$ is in $\Pcone_{\Gamma}$, so is $K=\Sigma^{-1}$ so that $K$ can also be decomposed according to \eqref{genform}. Equipped with all these results, we  compute the Bayes factors comparing models pairwise and perform model selection. We will indicate in Section \ref{modelselection} how to travel through the space of subgroups of the symmetric group. 

In Section \ref{gammaintegral}, we also derive the distribution of the maximum likelihood estimate (henceforth abbreviated MLE) of $\Sigma$  and show that for $n\geq \max_{i=1,\ldots,L}\left\{r_i d_i/k_i\right\}$ it has a density equal to
\begin{align*}
\frac{\Det{X}^{n/2} e^{- \tfrac12\Tr{X\cdot \Sigma^{-1}}} }
{\Det{2\Sigma}^{n/2} \Gamma_{\Pcone_\Gamma}(\frac{n}{2})} \varphi_\Gamma(X) {\bf 1}_{\Pcone_{\Gamma}}(X).
\end{align*}

Clearly, the key to computing the Gamma integral on $\Pcone_{\Gamma}$, the normalizing constant  $I_\Gamma(\delta,D)$ or the density of the MLE of $\Sigma$  is, for each $\Gamma\subset \mathfrak{S}_p$, to obtain the block diagonal matrix with diagonal block entries $M_{\mathbb{K}_i}(x_i) \otimes I_{k_i/d_i}$, $i=1,\ldots,L$, in the decomposition \eqref{genform}. In principle,
we have to derive the invariant measure $\varphi_\Gamma$  and find the structure constants $(k_i, d_i, r_i)_{i=1}^L$.  This goal can be achieved by constructing an orthogonal matrix $U_{\Gamma}$ and using \eqref{genform}. However, doing so for every $\Gamma$ visited during the model selection process is computationally heavy. 

We will show that for small to moderate dimensions, we can obtain the structure constants  as well as the expression of $\Det{X}$ and $\varphi_{\Gamma}(X)$ without having to compute $U_{\Gamma}$. Indeed, as indicated in Lemma \ref{rem:Useful}, for any $X\in \Pcone_{\Gamma}$, $\Det{X}$ admits a unique irreducible factorization of the form 
\begin{equation} \label{factorization}
\Det{X}  = \prod_{i=1}^L \Det{M_{\mathbb{K}_i}(x_i)}^{k_i/d_i} 
= \prod_{j=1}^{L} f_j(X)^{a_j} \quad (X \in \Zsp_\Gamma),
\end{equation}
where each $a_j$ is a positive integer, 
each $f_j(X)$ is an irreducible polynomial of $X \in \Zsp_\Gamma$, 
and $f_i \ne f_j$ if $i \ne j$.  
The constants $k_i$, $d_i$, $r_i$ are obtained by identification of the two expressions of $\Det{X}$ in \eqref{factorization}.
Factorization of a homogeneous polynomial $\Det{X}$ can be performed using standard software such as either \textsc{Mathematica} or \textsc{Python}. 

Due to computational complexity, for bigger dimensions, it is difficult to obtain the irreducible factorization of $\Det{X}$. For special cases such as the case where the subgroup $\Gamma$ is a cyclic group, we give (Section \ref{cyclic:basic}) a simple construction of the matrix $U_{\Gamma}$
and thus, for any dimension $p$, we can do model selection in the space of models invariant under the action of a cyclic group. We argue that restriction to cyclic groups is not as limiting as it may look. The formula for the number of different colorings $c_p = \#\set{\Zsp_\Gamma}{\Gamma\subset\mathfrak{S}_p}$ for given $p$ is unknown. Obviously, it is bounded from above by the number of all subgroups of $\mathfrak{S}_p$, because different subgroups may produce the same coloring (e.g. in Example \ref{ex1} we have 
$\Zsp_{\mathfrak{S}_3} = \Zsp_{\langle (1,2,3) \rangle}$). On the other hand, it is known (see Lemma \ref{lem:cyc}) that $c_p$ is bounded from below by the number of distinct cyclic subgroups, which grows rapidly with $p$ (see OEIS\footnote{The On-Line Encyclopedia of Integer Sequences, https://oeis.org/.} sequence A051625).
In particular, for $p=18$\footnote{The number of subgroups of $\mathfrak{S}_p$ is unknown for $p>18$, see \cite{Holt} and OEIS sequence A005432.}, we have $c_p \in (7.1\cdot 10^{14},7.6\cdot 10^{18})$, see also Table \ref{tab:tab2}. The lower bound for $c_p$ indicates that the colorings obtained from cyclic subgroups form a rich subfamily of all possible colorings.

The procedure to do model selection will be described in Section \ref{modelselection} and we will illustrate this procedure with Frets' data (see \cite{Frets}) and several examples for selection within cyclic groups, including a high dimensional example with $p=100$ (Section \ref{practical}) and a real data example \cite{Miller} with $p=150$ in Section 4 of the Supplement.

%Most technical parts of the paper are postponed to the Appendix.

%\subsubsection{\pg{Relations to Covariance Estimation in Signal Processing}}
%Here we review \pg{briefly} our work in the light of the article \cite{STW16}.\footnote{The authors are grateful to the reviewer's suggestion  to compare \cite{STW16} with the present paper.}
%The main concerns of  \pg{two papers} are  different.
%The subject of \cite{STW16} is  
% estimation of complex Hermitian covariance matrices of complex random vectors in non-Gaussian models,
% whereas ours is
% Bayesian model selection in real Gaussian graphical models.
%In our paper we adress marginally 
%the topic of MLE estimator of the covariance matrix
%for real Gaussian vectors with group invariance (Proposition
%\ref{pro:MLE} and Corollary \ref{cor:WisMLE}. )
%\pg{For more details on the
%essential 
% differences between complex and real settings, see Appendix.}

\section{Preliminaries and structure constants}\label{theory}

In this section we present methods to calculate the structure constants of a decomposition given in Theorem \ref{thm:intro}. Additions to this section can be found in Section 3 of the Supplementary material \cite{GIKM_SM}.

\subsection{Notation}
Let $\mathrm{Mat}(n, m; \real)$, $\mathrm{Sym}(n; \real)$ denote the linear spaces of real $n \times m$ matrices and symmetric real
$n \times n$ matrices, respectively. Let $\mathrm{Sym}^+(n; \real)$ be the cone of symmetric positive definite real $n \times n$ matrices. $\transp{A}$ denotes the transpose of a matrix $A$. $\mathrm{Det}$ and $\mathrm{Tr}$ denote the usual determinant and trace in $\mathrm{Mat}(n,n;\real)$.

For $A \in \mathrm{Mat}(m,n;\real)$ and $B \in \mathrm{Mat}(m', n';\real)$,
we denote by $A \oplus B$ the matrix
$\begin{pmatrix} A & O \\ O & B \end{pmatrix} \in \mathrm{Mat}(m+m', n+n';\real)$,
and by $A \otimes B$  the Kronecker product of $A$ and $B$. 
For a positive integer $r$, we write $B^{\oplus r}$ for 
$
I_r \otimes B \in \mathrm{Mat}(rm', rn'; \real)
$

Let $p$ denote the fixed number of vertices of a graph and let $\mathfrak{S}_p$ denote the symmetric group. 
We write permutations in cycle notation, meaning that $(i_1, i_2, \ldots, i_n)$ maps $i_j$ to $i_{j+1}$ for $j=1,\ldots,r-1$ and $i_n$ to $i_1$. By $\langle \sigma_1,\ldots,\sigma_k\rangle$ we denote the group generated by permutations $\sigma_1,\ldots,\sigma_k$. The composition (product) of permutations $\sigma, \sigma'\in\mathfrak{S}_p$ will be denoted by $\sigma\circ\sigma'$.
\begin{defi}
	For a subgroup $\Gamma \subset  \mathfrak{S}_p$,
	we define the space of symmetric matrices invariant under $\Gamma$, or the vector space of colored matrices,
	\[
	\Zsp_{\Gamma} 
	:= \set{x \in \mathrm{Sym}(p;\real)}{x_{ij} = x_{\sigma(i)\sigma(j)} \mbox{ for all }\sigma \in \Gamma},
	\]
	and the cone of positive definite matrices valued in $\Zsp_{\Gamma}$,
	\[
	\label{def:Pgamma}
	\Pcone_{\Gamma} := \Zsp_{\Gamma} \cap \mathrm{Sym}^+(p;\real).
	\]
\end{defi}
We note that the same colored space and cone can be generated by two different subgroups: in Example \ref{ex1}, the subgroup $\Gamma'=\langle (1,2,3)  \rangle$  
generated by the permutation $\sigma= (1,2,3)$
is such that $\Gamma'\neq\Gamma$ but $\Zsp_{\Gamma'}=\Zsp_\Gamma$. Let us define
\[
\Gamma^\ast = \set{ \sigma^\ast\in\mathfrak{S}_p}{x_{ij}=x_{\sigma^\ast(i)\sigma^\ast(j)}\mbox{ for all }x\in\mathcal{Z}_\Gamma}.
\]
Clearly, $\Gamma$ is a subgroup of $\Gamma^\ast$ and $\Gamma^\ast$ is the unique largest subgroup of $\mathfrak{S}_p$ such that $\Zsp_{\Gamma^\ast}=\Zsp_{\Gamma}$ or, equivalently, such that the $\Gamma^\ast$- and $\Gamma$- orbits in $\set{\{v_1,v_2\}}{v_i\in V,\, i=1,2}$ are the same. The group $\Gamma^\ast$ is called  the $2^\ast$-closure of $\Gamma$. The group $\Gamma$ is said to be $2^\ast$-closed if $\Gamma=\Gamma^\ast$. Subgroups which are $2^\ast$-closed  are in bijection with the set of colored spaces.  These concepts have been investigated in \cite{Wielandt, Siemons82} along with a generalization to regular colorings in \cite{Siemons83}. The combinatorics of $2^\ast$-closed subgroups is very complicated and little is known in general, \cite[p. 1502]{Comb}. In particular, the number of such subgroups is not known, but brute-force search for small $p$ indicates that this number is much less than the number of all subgroups of $\mathfrak{S}_p$ (see Table \ref{tab:tab2}). Even though cyclic subgroups of $\mathfrak{S}_p$ are in general not $2^\ast$-closed, each cyclic group corresponds to a different coloring (see Lemma \ref{lem:cyc}).

For a permutation $\sigma \in \mathfrak{S}_p$, denote its matrix by
\begin{equation}
\label{def:R}
R(\sigma) := \sum_{i=1}^p E_{\sigma(i) i},
\end{equation}
where $E_{ab}$ is the $p\times p$ matrix with $1$ in the $(a,b)$-entry and $0$ in other entries. 
The condition $x_{\sigma(i)\sigma(j)}=x_{ij}$ is then equivalent to $R(\sigma) \cdot x\cdot  \transp{R(\sigma)} = x$. Consequently,
\begin{align}
\label{def:Zsp}
\Zsp_{\Gamma}
= \set{x \in \mathrm{Sym}(p;\real)}
{R(\sigma) \cdot x\cdot \transp{R(\sigma)} = x \mbox{ for all } \sigma \in \Gamma}.
\end{align}

\begin{defi}
	Let $\pi_{\Gamma}\colon \mathrm{Sym}(p;\real)\to \Zsp_{\Gamma}$ be the orthogonal projection
	on  $\Zsp_{\Gamma}$, i.e. 
	the linear map
	such that for any $x\in\mathrm{Sym}(p;\real)$ the element $\pi_{\Gamma}(x)\in \Zsp_{\Gamma}$ is uniquely determined by
	\begin{align}\label{def:proj}
	\Tr{x \cdot y} = \Tr{\pi_{\Gamma}(x) \cdot y}\qquad (y\in \Zsp_{\Gamma}).
	\end{align} 
\end{defi}

In view of \eqref{def:Zsp}, it is clear that 
\begin{align}\label{eq:proj}
\pi_{\Gamma}(x) = \frac{1}{|\Gamma|} \sum_{\sigma \in \Gamma} R(\sigma) \cdot x\cdot \transp{R(\sigma)}
\end{align}
satisfies the above definition. Here $|\Gamma|$ denotes the order of $\Gamma$.
\subsection{$\Zsp_{\Gamma}$ as a Jordan algebra}\label{sec:Jordan}
To derive analytic expression for Gamma-like functions on $\mathcal{P}_\Gamma$ it is convenient to see $\Zsp_\Gamma$ as a Euclidean Jordan algebra and $\mathcal{P}_\Gamma$ as the corresponding symmetric cone. This fact was already observed in \cite{Je88}. We recall here the fundamentals of Jordan algebras, cf. \cite{FaKo94}.
A Euclidean Jordan algebra is a Euclidean space $\A$ (endowed with the scalar product denoted by $\cpling{\cdot}{\cdot}$) equipped with a bilinear mapping (product)
\begin{align*}
\A\times \A \ni \left(x,y\right)\mapsto x\bullet y\in \A
\end{align*}
%and a neutral element $e$ in $\A$ 
such that for all $x$, $y$, $z$ in $\A$:
\begin{itemize}
	\item[{\rm (i)}] $x\bullet y=y\bullet x$, 
	\item[{\rm (ii)}] $x\bullet((x\bullet x)\bullet y)=(x\bullet x)\bullet(x\bullet y)$,
	%	\item[{\rm (iii)}] $x\bullet e=x$,
	\item[{\rm (iii)}] $\cpling{x}{y\bullet z}=\cpling{x\bullet y}{z}$.
\end{itemize}
%It is known (\cite[p. 146]{FaKo94})
% that a Euclidean Jordan algebra $\A$ has always a (unique) unit element $e \in \A$
% such that $x \bullet e = x$ for all $x \in \A$.
A Euclidean Jordan algebra 
is said to be simple if it is not a Cartesian product of two Euclidean Jordan algebras of positive dimensions. We have the following result.
\begin{pro}
	\label{ZJordan}
	The Euclidean space $\Zsp_{\Gamma}$ with  inner product $\cpling{x}{y}=\Tr{x\cdot y}$ and  the Jordan product
	\begin{align}\label{eq:JordanProduct}
	x\bullet y=\tfrac{1}{2}(x\cdot y+y\cdot x),
	\end{align}
	is a Euclidean Jordan algebra. This algebra is generally non-simple.
\end{pro}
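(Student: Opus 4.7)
My plan is to verify the three Jordan axioms directly, after first checking that the Jordan product indeed maps $\Zsp_\Gamma \times \Zsp_\Gamma$ into $\Zsp_\Gamma$. For the closure, I would take $x, y \in \Zsp_\Gamma$, note that $x \bullet y$ is symmetric since $\transp{(xy+yx)} = yx + xy$, and then for any $\sigma \in \Gamma$ compute
\begin{equation*}
R(\sigma)\cdot(x\bullet y)\cdot\transp{R(\sigma)} = \tfrac{1}{2}\bigl(R(\sigma) x\transp{R(\sigma)} R(\sigma) y\transp{R(\sigma)} + R(\sigma) y\transp{R(\sigma)} R(\sigma) x\transp{R(\sigma)}\bigr) = x\bullet y,
\end{equation*}
using $\transp{R(\sigma)} R(\sigma) = I_p$ together with the defining invariance of $x$ and $y$ in \eqref{def:Zsp}.

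For the axioms themselves, all three follow from standard properties of the matrix trace and the matrix product inherited from the ambient algebra $\mathrm{Sym}(p;\real)$. Commutativity (i) is immediate from the symmetric form of \eqref{eq:JordanProduct}. The inner-product associativity (iii) reduces to the cyclicity of the trace: both $\cpling{x}{y\bullet z}$ and $\cpling{x\bullet y}{z}$ expand to $\tfrac{1}{2}(\Tr{xyz} + \Tr{xzy})$. The Jordan identity (ii) is the only nontrivial one, but it is a classical fact that $\mathrm{Sym}(p;\real)$ with the anticommutator product is a Euclidean Jordan algebra, so (ii) holds in the ambient space and hence restricts to the subspace $\Zsp_\Gamma$, which we have just shown is closed under $\bullet$.

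For the non-simplicity claim, I would point to the decomposition \eqref{genform}: whenever $L \ge 2$, the block-diagonal structure gives an orthogonal direct sum decomposition $\Zsp_\Gamma \cong \bigoplus_{i=1}^L \A_i$ where each $\A_i$ is the Jordan algebra of $k_i/d_i$-fold block-diagonal matrices built from $r_i \times r_i$ Hermitian matrices over $\K_i$, and the blocks multiply independently, so this is a Cartesian product decomposition of Euclidean Jordan algebras. Example \ref{ex1} with $\Gamma = \mathfrak{S}_3$ already exhibits $L = 2$ and thus a non-simple algebra.

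The only step I would need to be slightly careful with is the logical ordering: since \eqref{genform} (and Theorem \ref{thm:decoJordan}) is what is being built toward using this proposition, I would not rely on it for the axioms themselves, only for the non-simplicity remark, which is an observation rather than part of the formal claim. The proof of (i)--(iii) rests only on elementary manipulations and the known Jordan algebra structure of $\mathrm{Sym}(p;\real)$, so the only real work is the closure verification, which is essentially a one-line computation using orthogonality of permutation matrices.
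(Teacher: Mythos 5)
Your proposal is correct and follows essentially the same route as the paper: the Jordan axioms are inherited from the ambient Euclidean Jordan algebra $\mathrm{Sym}(p;\real)$, and the only substantive point is closure of $\bullet$ on $\Zsp_\Gamma$, which both you and the paper obtain from the characterization \eqref{def:Zsp} (equivalently, each $x\in\Zsp_\Gamma$ commutes with every $R(\sigma)$). Your extra care about not invoking \eqref{genform} circularly, and your explicit verification of (i) and (iii), only add detail to the same argument.
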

\begin{proof}
	Since  $\Zsp_{\Gamma}$ is a subset of the Euclidean Jordan algebra $\mathrm{Sym}(p; \real)$, if it is endowed with Jordan product \eqref{eq:JordanProduct}, 
	conditions {\rm (i)--{(iii)}} are automatically satisfied. Moreover, characterization  \eqref{def:Zsp} of  $\Zsp_\Gamma$ implies that the Jordan product is closed in $\Zsp_\Gamma$, that is, $R(\sigma)\cdot(x\bullet y)=(x\bullet y)\cdot R(\sigma)$ for all $x,y\in\Zsp_{\Gamma}$ and $\sigma\in\Gamma$. The result follows. 
\end{proof} 

Up to linear isomorphism, there are only five kinds of Euclidean simple Jordan algebras. 
Let $\mathbb{K}$ denote the set of either the real numbers $\real$, the complex ones $\complex$ or the quaternions $\mathbb{H}$. Let us write $\mathrm{Herm}(r;\mathbb{K})$ for the space of $r\times r$ Hermitian matrices valued in $\mathbb{K}$. Then $\mathrm{Sym}(r;\real)$, $r\geq 1$, $\mathrm{Herm}(r;\complex)$, $r\geq 2$, $\mathrm{Herm}(r; \mathbb{H})$, $r\geq 2$ are the first three kinds of Euclidean simple Jordan algebras and they are the only ones that will concern us. The determinant and trace in Jordan algebras $\mathrm{Herm}(r;\mathbb{K})$ will be denoted by $\det$ and $\mathrm{tr}$ (see \cite[p. 29]{FaKo94}) respectively, so that they can be easily distinguished from the determinant and trace in $\mathrm{Mat}(n,n;\real)$ which we denote by $\mathrm{Det}$ and $\mathrm{Tr}$.

To each Euclidean  
Jordan algebra $\A$, one can attach the set $\overline{\Omega}$ of Jordan squares, that is, $\overline{\Omega} = \set{x\bullet x}{ x\in\A}$.
The interior $\Omega$ of $\overline{\Omega}$ is a symmetric cone, that is, it is self-dual and homogeneous.
We say that $\Omega$ is irreducible if it is not the Cartesian product of two convex cones. One can prove that an open convex cone is symmetric and irreducible if and only if it is the symmetric cone $\Omega$ of some Euclidean simple Jordan algebra. Each simple Jordan algebra corresponds to a symmetric cone. The  first three kinds
of irreducible symmetric cones
are thus, the symmetric
positive definite real matrices $\mathrm{Sym}^+(r;\real)$ for $r\geq 1$, complex Hermitian positive definite matrices $\mathrm{Herm}^+(r; \complex)$, and quaternionic Hermitian positive definite matrices $\mathrm{Herm}^+(r;\mathbb{H})$, $r\geq 2$.

It follows from Definition \ref{def:Pgamma} and Proposition \ref{ZJordan}  that $\Pcone_{\Gamma}$ is a symmetric cone. In \cite[Proposition III.4.5]{FaKo94} it is stated that any symmetric cone is a direct sum of irreducible symmetric cones. As it will turn out, only three out of the five kinds of irreducible symmetric cones may appear in this decomposition.

Moreover, we will want to represent the elements of the symmetric cones in their real symmetric matrix representations. 
So, we recall that both $\mathrm{Herm}(r; \complex)$ and $\mathrm{Herm}(r;\mathbb{H})$ can be realized as real symmetric matrices, but of bigger dimension. 
For $z=a+b\,i\in\complex$ define 
$M_\complex(z)=\begin{pmatrix}
a & -b \\
b & a
\end{pmatrix}$.
The function $M_\complex$ is a matrix representation of $\mathbb{C}$.
{
	Similarly, any $r\times r$ complex matrix  can be realized as a $(2r)\times(2r)$ real matrix by setting the correspondence
	\[
	\mathrm{Mat}(r,r;\complex)\ni
	\begin{pmatrix}
	z_{i,j}
	\end{pmatrix}_{1\leq i,j\leq r}
	\simeq
	\begin{pmatrix}
	M_\complex(z_{i,j})
	\end{pmatrix}_{1\leq i,j\leq r}\in\mathrm{Mat}(2r,2r;\real),
	\]
	that is, an $(i,j)$-entry of a complex matrix is replaced by its $2\times 2$ real matrix representation. 
	Note that $M_\complex$ maps the space $\mathrm{Herm}(r; \complex)$ of Hermitian matrices into the space $\mathrm{Sym}(2r;\real)$ of symmetric matrices.}
For example, 
\[
{M_\complex \begin{pmatrix}  a & c-d\, i \\ c +d\, i & b \end{pmatrix}
	=}
\begin{pmatrix}
a & 0 & c & d \\
0 & a & -d & c \\
c & -d & b & 0 \\
d & c & 0 & b
\end{pmatrix}.
\]
Moreover, by direct calculation one sees that
\[
\Det{\begin{matrix}
	a & 0 & c & d \\
	0 & a & -d & c \\
	c & -d & b & 0 \\
	d & c & 0 & b
	\end{matrix}} = \det\begin{pmatrix}  a & c-d\, i \\ c +d\, i & b \end{pmatrix}^2.
\]
It can be shown that, in general,
\begin{align}\label{eq:DetTrC}
\Det{M_\complex(Z)}
=
\left[\det\left(Z\right)\right]^2
\quad\mbox{ and }\quad
\Tr{M_{\complex}(Z)} = 2 \,\mathrm{tr}[Z]
\qquad {(Z \in \mathrm{Herm}(r; \complex))}.
\end{align}

Similarly, quaternions can be realized as a $4\times 4$ matrix:
\[
a+b\,i+c\,j+d\,k\simeq \begin{pmatrix}
a+b\, i & -c+d\, i \\
c+d\,i & a-b\, i
\end{pmatrix}
\simeq
\begin{pmatrix}
a & -b & -c & -d \\
b & a & d & -c \\
c & -d & a & b \\
d & c & -b & a 
\end{pmatrix}.
\]
Then, quaternionic $r\times r$ matrices are realized as $(4r)\times(4r)$ real matrices.
Thus, $M_\quat$ maps $\mathrm{Herm}(r; \quat)$ into $\mathrm{Sym}(4r; \real)$. 
Moreover, it is true that
\begin{align}\label{eq:DetTrHerm}
\Det{M_\quat(Z)}
=
\left[\det\left(Z\right)\right]^4
\quad\mbox{ and }\quad
\Tr{M_{\quat}(Z)} = 4 \,\mathrm{tr}[Z]
\qquad {(Z \in \mathrm{Herm}(r; \quat))}.
\end{align}

\subsection{Determining the structure constants and  invariant measure on 	$\Pcone_\Gamma$}\label{sec:25}
As mentioned in the introduction, in order to derive the analytic expression of the Gamma-like functions on $\Pcone_{\Gamma}$, we need the structure constants $(k_i, d_i, r_i)_{i=1}^L$ as well as the invariant measure $\varphi_{\Gamma}$. However, due to Proposition \ref{pro:phi} below, $\varphi_\Gamma(X)$ is expressed in terms of the polynomials $\det(x_i)$, where $x_i\in \mathrm{Herm}(r_i; \mathbb{K}_i)$, $i=1,\ldots,L$, coming from decomposition \eqref{genform}. These can be derived from the decomposition of $\Zsp_{\Gamma}$. Let us note that the constants $(d_i)_i$ and $(k_i)_i$ depend only on the group $\Gamma$, while $r_i$ depend on a particular representation of $\Gamma$, which is $R$. 

In view of decomposition \eqref{genform}, for $X\in \Zsp_\Gamma$,
define $\phi_i(X)= x_i \in \mathrm{Herm}(r_i; \mathbb{K}_i)$
for $i=1, \dots, L$.

\begin{cor}\label{cor:cor11}
	For $X \in \Zsp_\Gamma$, 
	one has
	\begin{equation} \label{eq:detJordan}
	\Det{X} = \prod_{i=1}^L \det (\phi_i(X))^{k_i}.
	\end{equation}
\end{cor}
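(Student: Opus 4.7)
The plan is to apply Theorem \ref{thm:decoJordan} to rewrite $X$ in block-diagonal form, then combine two standard determinant identities: the one for Kronecker products, and the formulas \eqref{eq:DetTrC}, \eqref{eq:DetTrHerm} relating $\mathrm{Det}$ of a real realization $M_{\K}(\cdot)$ to $\det$ of the Hermitian matrix.

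First, I would observe that since $U_\Gamma$ is orthogonal, $\Det{U_\Gamma}^2=1$, so taking the $\mathrm{Det}$ of the right-hand side of \eqref{eqn:JordanIsom} yields
\[
\Det{X}=\prod_{i=1}^{L}\Det{M_{\K_i}(x_i)\otimes I_{k_i/d_i}},
\]
using that the determinant of a block-diagonal matrix is the product of the determinants of its blocks. Next, since $M_{\K_i}(x_i)$ is a square matrix of size $d_i r_i$ and $I_{k_i/d_i}$ has size $k_i/d_i$, the Kronecker-product determinant identity gives
\[
\Det{M_{\K_i}(x_i)\otimes I_{k_i/d_i}}=\Det{M_{\K_i}(x_i)}^{k_i/d_i}.
\]

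The final step is a case-by-case substitution according to $\K_i\in\{\real,\complex,\quat\}$. For $\K_i=\real$ we have $d_i=1$ and $M_\real(x_i)=x_i$, so the contribution is $\det(x_i)^{k_i}$. For $\K_i=\complex$ we have $d_i=2$ and, by \eqref{eq:DetTrC}, $\Det{M_\complex(x_i)}=\det(x_i)^2$, giving $\det(x_i)^{2\cdot k_i/2}=\det(x_i)^{k_i}$. For $\K_i=\quat$ we have $d_i=4$ and, by \eqref{eq:DetTrHerm}, $\Det{M_\quat(x_i)}=\det(x_i)^4$, giving $\det(x_i)^{4\cdot k_i/4}=\det(x_i)^{k_i}$. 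In every case the exponent collapses to $k_i$, which is the point where the convention that the real realization of a $\K_i$-Hermitian matrix has determinant $\det(\cdot)^{d_i}$ is exactly counterbalanced by the factor $k_i/d_i$ coming from $I_{k_i/d_i}$. Multiplying over $i=1,\ldots,L$ and recalling $x_i=\phi_i(X)$ yields \eqref{eq:detJordan}.

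There is no real obstacle here; the statement is essentially a bookkeeping consequence of Theorem \ref{thm:decoJordan} combined with the identities \eqref{eq:DetTrC}--\eqref{eq:DetTrHerm}. The only point worth flagging is the uniform collapse $(k_i/d_i)\cdot d_i=k_i$ across the three skew fields, which is why the formula takes the same form \eqref{eq:detJordan} regardless of which $\K_i$ occurs in each block.
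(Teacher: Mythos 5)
Your proof is correct and follows essentially the same route as the paper: take determinants in the block-diagonal decomposition \eqref{eqn:JordanIsom}, apply the Kronecker-product identity to get the exponent $k_i/d_i$, and then use \eqref{eq:DetTrC} and \eqref{eq:DetTrHerm} so that the exponents collapse to $k_i$. The explicit case-by-case check over $\K_i\in\{\real,\complex,\quat\}$ is just a spelled-out version of the paper's one-line use of those identities.
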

\begin{proof}
	By \eqref{genform}, we have
	\begin{align*}
	\Det{X} &= \prod_{i=1}^L \Det{ M_{\mathbb{K}_i}(x_i)  \otimes I_{k_i/d_i} }
	= \prod_{i=1}^L \Det{M_{\mathbb{K}_i}(x_i)}^{k_i/d_i} \\
	& 
	= \prod_{i=1}^L \left[ \det (x_i)^{d_i}  \right]^{k_i/d_i}
	= \prod_{i=1}^L \det (x_i)^{k_i},
	\end{align*}
	whence follows the formula.
	We have used \eqref{eq:DetTrC}
	and   \eqref{eq:DetTrHerm}
	%the equivalent for quaternionic matrices
	for the third equality above.
\end{proof}

\begin{lem}\label{rem:Useful}
	Assume that $\Gamma\subset\mathfrak{S}_p$ and that $(k_i,d_i,r_i)_{i=1}^L$ are the structure constants corresponding to $\Zsp_\Gamma $.  
Assume that we have an irreducible factorization
\begin{equation} \label{eqn:another_factorization}
	\Det{X} = \prod_{j=1}^{L'} f_j(X)^{a_j} \qquad (X \in \Zsp_\Gamma),
\end{equation}
where each $a_j$ is a positive integer, 
each $f_j(X)$ is an irreducible polynomial of $X \in \Zsp_\Gamma$, 
and $f_i \ne f_j$ if $i \ne j$.

Then, $L = L'$, for each $j$ there exists unique $i$ such that $f_j(X)^{a_j} = \det(\phi_i(X))^{k_i}$, and
\begin{enumerate}
\item[\textit a)] $k_i=a_j$,
\item[\textit b)] $r_i$ is the degree of $f_j(X) = \det(\phi_i(X))$,
\item[\textit c)] If $r_i>1$, then $d_i$ can be calculated from 
$r_i + d_i r_i (r_i-1)/2 =  \mathrm{rank}(P_j)$, 
where $P_j$ is the linear operator  defined by $\Zsp_\Gamma\ni x\mapsto P_j(x)=E_j \bullet x \in \Zsp_\Gamma$ and 
$E_j \in \Zsp_\Gamma$ is the gradient of $f_j(X)$ at $X = I_p$.
\end{enumerate}
\end{lem}
\begin{rem}
If $r_i=1$, the determination of
$d_i$ is not needed for writing the block decomposition  of
$\Zsp_\Gamma$, since in this case $\real=
\mathrm{Herm}(1; \real)=\mathrm{Herm}(1; \complex)=
\mathrm{Herm}(1; \mathbb{H})$ and, if $k_i$ is divisible by 2 or by 4, we have
$
M_{\mathbb{K}_i}(x_i)  \otimes I_{k_i/d_i}=x_i I_{k_i}.
$
\end{rem}
\begin{proof}[Proof of Lemma \ref{rem:Useful}]
Since the determinant polynomial of a simple Jordan algebra is always irreducible \cite[Lemma 2.3 (1)]{Upmeier},  
		comparing \eqref{eq:detJordan} and \eqref{eqn:another_factorization}, we obtain $L = L'$, and that, for each $j$, there exists $i$ such that $f_j(X)^{a_j} = \det(\phi_i(X))^{k_i}$.
		From this follows also \textit{a)} and \textit{b)}.
		
Observe that $r_i + d_i r_i (r_i-1)/2 = \dim_\real \,\mathrm{Herm}(r_i; \mathbb{K}_i)$. Point \textit{c)} follows from the fact that $P_j$ coincides with the projection $\bigoplus_{i=1}^L \mathrm{Herm}(r_i; \mathbb{K}_i) 
		\to \mathrm{Herm}(r_i; \mathbb{K}_i)$.  
\end{proof}

The practical significance of the method proposed in this lemma is that neither
representation theory nor group theory is used.
It is a strong advantage when we consider colorings corresponding to a
large number of different groups, for which finding structure constants is very complicated.

\begin{rem}\label{rem:factorization}
	The factorization of multivariate polynomials over an algebraic number field can be done for example in \textsc{Python} (see \textsl{sympy.polys.polytools.factor}) or in \textsc{Mathematica} (see \textsl{Factor}). However, in order to make use of Lemma \ref{rem:Useful}, one has to perform a factorization over the real number field. It turns out that the previously listed tools can be used for this purpose by selecting an appropriate \textsl{Extension} parameter. 
	Indeed, in our setting, the irreducible factorization over the real number field coincides with
	the one over the real cyclotomic field 
	\[
	\mathbb{Q}\left[\zeta+\frac{1}{\zeta}\right]
	=\set{ \sum_{k=0}^{\varphi_E(M)/2-1} q_k \, \left(\zeta + \frac{1}{\zeta}\right)^k}{q_k \in \mathbb{Q},\,\,
		k=0,1, \dots, \varphi_E(M)/2-1},
	\]
	where $\zeta$ is the primitive $M$-th root $e^{2\pi \iu /M}$ of unity 
	with $M$ being the least common multiple of the orders of elements $\sigma \in \Gamma$,
	and 
	$\varphi_E(M)$ is the number of positive integers up to $M$ that are relatively prime to $M$  \cite[Section 12.3]{JPS77}. 
\end{rem}

An example showing the utility of Lemma \ref{rem:Useful} can be found in the Supplementary material \cite[Section 3.1]{GIKM_SM}.

\subsection{Finding structure constants and construction of the orthogonal matrix  $U_\Gamma$ when $\Gamma$ is cyclic}
\label{cyclic:basic}
We now show that, 
when the group $\Gamma$ is generated by one permutation 
$\sigma \in \mathfrak{S}_p$, 
the orthogonal matrix $U_\Gamma$ can be constructed explicitly,
and we obtain the structure constants $r_i, \, k_i$ and $d_i$ easily.

Let us consider the $\Gamma$-orbits in $\{1,2, \dots, p\}$. 
Let $\{i_1, \dots, i_C\}$ be a complete system of representatives of the $\Gamma$-orbits,
and for each $c=1, \dots, C$, 
let $p_c$ be the cardinality of the $\Gamma$-orbit through $i_c$.
The order $N$ of $\Gamma$ equals the least common multiple of $p_1,\,p_2, \dots, p_C$ and one has $\Gamma = \{\mathrm{id},\, \sigma,\, \sigma^2, \ldots, \sigma^{N-1}\}$. In what follows, we treat $0$ as a multiple of $N$. 

\begin{thm}\label{thm:cyclic1}
	Let $\Gamma=\left<\sigma\right>$ be a cyclic group of order $N$. 
	For $\alpha=0,1,\ldots,\lfloor\frac{N}{2}\rfloor$ set
	\begin{align*}	
	r_\alpha^\ast &= \#\set{c\in\{1,\ldots,C\}}{\alpha\, p_c  \mbox{ is a multiple of }N}\\
	d_\alpha^\ast &= \begin{cases} 1 & (\alpha = 0 \mbox{ or }N/2) \\ 2 & \mbox{(otherwise)}. \end{cases}
	\end{align*}
	Then we have $L=\#\set{\alpha}{r_\alpha^\ast>0}$,
	$r = (r_\alpha^\ast;\, r_\alpha^\ast>0)$ 
	and
	$k = d = (d_\alpha^\ast;\, r_\alpha^\ast>0)$.
\end{thm}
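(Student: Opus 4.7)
The strategy is to decompose $(R,\real^p)$ orbit by orbit, diagonalize $R(\sigma)$ over $\complex$ on each orbit, and then pair conjugate complex characters into real irreducibles.

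First, I would write $\real^p = \bigoplus_{c=1}^C W_c$, where $W_c$ is the span of the standard basis vectors indexed by the $c$-th $\Gamma$-orbit. Each $W_c$ is $\Gamma$-invariant of real dimension $p_c$, and $R(\sigma)|_{W_c}$ is, after relabelling, the standard cyclic shift on $p_c$ coordinates. Its complex eigenvalues are therefore exactly the $p_c$-th roots of unity $\{e^{2\pi\iu j/p_c} : j = 0, \ldots, p_c-1\}$, each with multiplicity one. Since $p_c \mid N$, any such eigenvalue can be written as $e^{2\pi\iu\alpha/N}$ with $\alpha \in \{0, \ldots, N-1\}$, and it occurs on $W_c$ iff $\alpha$ is a multiple of $N/p_c$, equivalently iff $\alpha p_c \equiv 0 \pmod N$.

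Next, I would enumerate $\Lambda(\Gamma)$ for a cyclic group: the trivial character ($\alpha = 0$); the sign-type character $\sigma \mapsto -1$ when $N$ is even ($\alpha = N/2$); and, for each $\alpha \in \{1, \ldots, \lfloor(N-1)/2\rfloor\}$, a two-dimensional real representation $\rho_\alpha$ obtained by combining the conjugate pair of complex characters $e^{\pm 2\pi\iu\alpha/N}$, in which $\sigma$ acts as the planar rotation of angle $2\pi\alpha/N$. These are indexed precisely by $\alpha \in \{0, 1, \ldots, \lfloor N/2\rfloor\}$ and have real dimension $k_\alpha = d_\alpha^\ast$. For the commutants, in the one-dimensional case $\mathrm{End}_\Gamma(V_\alpha) = \real$, while for a non-trivial planar rotation the commutant in $\mathrm{Mat}(2,2;\real)$ is the $2$-dimensional algebra $\real I + \real \rho_\alpha(\sigma)$, isomorphic to $\complex$. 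By Lemma \ref{lem:BalphaProperty} this yields $d_\alpha = d_\alpha^\ast$ in every case, and $\mathbb{K}_\alpha = \real$ or $\complex$ accordingly.

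Finally, I would combine the two steps: the real irreducible indexed by $\alpha$ appears in $W_c$ iff the associated complex characters $e^{\pm 2\pi\iu\alpha/N}$ are eigenvalues of $R(\sigma)|_{W_c}$, iff $\alpha p_c \equiv 0 \pmod N$, and then with multiplicity exactly one since each eigenvalue of the cyclic shift is simple. Summing over $c$ gives $r_\alpha = \#\{c : \alpha p_c \equiv 0 \pmod N\} = r_\alpha^\ast$, and restricting to $\alpha$ with $r_\alpha^\ast > 0$ produces $L$ together with the tuples $(r_i)$ and $(k_i) = (d_i) = (d_\alpha^\ast)$. The main delicate point is keeping the two indexings---$p_c$-th roots of unity on each $W_c$ versus the $N$-th roots parametrizing $\Lambda(\Gamma)$---in sync, and explaining why a $2$-dimensional rotation irreducible appears with multiplicity one (not two) despite corresponding to two conjugate complex eigenvalues; the divisibility $p_c \mid N$ and the standard real/complex pairing make both points routine.
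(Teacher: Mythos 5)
Your proposal is correct and follows essentially the same route as the paper: both restrict $(R,\real^p)$ to the $\Gamma$-orbits, recognize each restriction as the cyclic shift on $p_c$ coordinates, and match its spectrum (the $p_c$-th roots of unity, each simple) against the real irreducibles of $\langle\sigma\rangle$ — trivial, sign when $N$ is even, and planar rotations with commutant $\complex$ — to get $r_\alpha^\ast = \#\{c : \alpha p_c \equiv 0 \pmod N\}$. The only cosmetic difference is that you diagonalize over $\complex$ and pair conjugate characters, whereas the paper exhibits the real trigonometric basis vectors $v^{(c)}_{2\beta}, v^{(c)}_{2\beta+1}$ directly (which it needs anyway to build $U_\Gamma$ in Theorem~\ref{thm:cyclic2}).
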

Note that, $r_0^\ast$ equals the number $C$ of cycles in a decomposition of a permutation.

\begin{exa}\label{ex:splitted1}
	Let us consider 
	$\sigma = (1,2,3){(4,5)}{(6)} 
	\in \mathfrak{S}_6$. 
	The three $\Gamma$-orbits are $\{1,2,3\}, \{4,5\}$ and $\{6\}$.
	Set $i_1=1$, $i_2=4$, $i_3=6$.
	Then $p_1 = 3$, $p_2 = 2$, $p_3 = 1$. 
	We have $N=6$.
	We count $r_0^\ast = 3$, $r_1^\ast = 0$, $r_2^\ast = 1$, $r_3^\ast = 1$, 
	so that $r=(3,1,1)$. Since $d=(1,2,1)$, we have $\Zsp_\Gamma \simeq \mathrm{Sym}(3; \real) \oplus \mathrm{Herm}(1; \complex) \oplus \mathrm{Sym}(1; \real)$.
\end{exa}

For $c=1,\ldots,C$ define $v^{(c)}_1, \dots, v^{(c)}_{p_c} \in \real^p$ by
\begin{align*}
v^{(c)}_1 &:= \sqrt{\frac{1}{p_c}} \sum_{k=0}^{p_c-1} e_{\sigma^k(i_c)}, \\
v^{(c)}_{2\beta} 
&:= \sqrt{\frac{2}{p_c}} \sum_{k=0}^{p_c-1} \cos \Bigl( \frac{2\pi \beta k}{p_c} \Bigr) e_{\sigma^k(i_c)} \qquad (1 \le \beta < p_c/2),\\
v^{(c)}_{2\beta+1} 
&:= \sqrt{\frac{2}{p_c}} \sum_{k=0}^{p_c-1} \sin \Bigl( \frac{2\pi \beta k}{p_c} \Bigr) e_{\sigma^k(i_c)}
\qquad (1 \le \beta < p_c/2),\\
v^{(c)}_{p_c} &:= \sqrt{\frac{1}{p_c}} \sum_{k=0}^{p_c-1} \cos (\pi k) e_{\sigma^k(i_c)}
\qquad\quad\,\,\, (\mbox{if }p_c \mbox{ is even}).
\end{align*}

\begin{thm}\label{thm:cyclic2}
	The orthogonal matrix $U_\Gamma$ from Theorem \ref{thm:intro} can be obtained by arranging column vectors $\{v^{(c)}_k\}$, $1 \le c \le C$, $1\le k \le p_c$ in the following way:
	we put $v^{(c)}_k$ earlier than $v^{(c')}_{k'}$
	if\\ 
	{\rm (i)} $\frac{[k/2]}{p_c} < \frac{[k'/2]}{p_{c'}}$, or\\
	{\rm (ii)} $\frac{[k/2]}{p_c} = \frac{[k'/2]}{p_{c'}}$ 
	and $c<c'$, or\\
	{\rm (iii)} $\frac{[k/2]}{p_c} = \frac{[k'/2]}{p_{c'}}$ and $c = c'$ and $k$ is even and $k'$ is odd.
\end{thm}
Proofs of the above results are presented in the Supplementary material.
We shall see there that $R(\sigma)$ acts on the 2-dimensional space 
spanned by $v^{(c)}_{2\beta}$ and $v^{(c)}_{2\beta + 1}$
as a rotation with the angle $2\pi \beta/p_c$, $1\leq \beta<p_c/2$.
The condition (i) means that the angle for $v^{(c)}_k$ is smaller than 
the one for $v^{(c')}_{k'}$.

\begin{exa}\label{ex:splitted2}
We continue Example \ref{ex:splitted1}.
	According to Theorem  \ref{thm:cyclic2},
	\[
	U_\Gamma = \begin{pmatrix} v^{(1)}_1,  v^{(2)}_1,  v^{(3)}_1,  v^{(1)}_2,  v^{(1)}_3,  v^{(2)}_2 \end{pmatrix}
	= \begin{pmatrix} 
	1/\sqrt{3} & 0 & 0 & \sqrt{2/3} & 0 & 0 \\
	1 / \sqrt{3} & 0 & 0 & -\sqrt{1/6} & 1/\sqrt{2} & 0\\
	1 / \sqrt{3} & 0 & 0 & -\sqrt{1/6} & -1/\sqrt{2} & 0\\
	0 & 1/\sqrt{2} & 0 & 0 & 0 & 1/\sqrt{2} \\ 
	0 & 1/\sqrt{2} & 0 & 0 & 0 & -1/\sqrt{2} \\ 
	0 & 0& 1 & 0 & 0& 0 \end{pmatrix}.
	\]
	Then we have
	\[ 
	\transp{U_\Gamma} \cdot R(\sigma^k) \cdot U_\Gamma
	= \begin{pmatrix} I_3 \otimes B_0(\sigma^k)& & \\ & B_2(\sigma^k) & \\ & & B_3(\sigma^k) \end{pmatrix},
	\]
	where $B_0(\sigma^k)=1$,  $B_2(\sigma^k)=\mathrm{Rot}(\frac{2\pi  k}{3}) \in \mathrm{GL}(2; \real)$ and $B_3(\sigma^k)=(-1)^k$. Here 
	$\mathrm{Rot}(\theta)$
	denotes the rotation matrix  
	$\begin{pmatrix} \cos \theta & -\sin \theta \\ \sin \theta & \cos \theta \end{pmatrix}$
	for $\theta \in \real$.\\
	The block diagonal decomposition of $\Zsp_\Gamma$ is 
	\[
	\transp{U_\Gamma} \cdot \Zsp_\Gamma \cdot U_\Gamma
	= \set{ \begin{pmatrix} x_1 &  & \\ & x_2 I_2 & \\ & & x_3 \end{pmatrix}}{x_1 \in \mathrm{Sym}(3;\real),\,\,x_2,x_3 \in \real}. 
	\]
\end{exa}

\begin{rem}
	In the cyclic case we have $k=d$ and so the formula \eqref{genform} holds without the Kronecker product terms. Since $d_i\in\{1,2\}$, the quaternionic case never occurs.
\end{rem}

\section{Gamma integrals and normalizing constants}\label{gammaintegral}
\subsection{Gamma integrals on irreducible symmetric cones}
Let $\Omega$ be one of the first three kinds of irreducible symmetric cones, that is, $\Omega=\mathrm{Herm}^+(r; \K)$, where $\K\in\{\real,\complex,\mathbb{H}\}$.
As before, determinant and trace on corresponding Euclidean Jordan algebras are denoted by $\det$ and $\mathrm{tr}$.
Then, we have the relation
\[
\dim\Omega = r+\frac{r(r-1)}{2}d,
\]
where $d=1$ if $\K=\real$, $d=2$ if $\K=\complex$ and $d=4$ if $\K=\mathbb{H}$.

Recall that Euclidean measure is the volume measure induced by the Euclidean metric.
Let $m(\mathrm{d}x)$ denote the Euclidean measure associated with the Euclidean structure defined on $\mathcal{A} = \mathrm{Herm}(r;\K)$ by $\cpling{x}{y}=\mathrm{tr}[x\bullet y] = \mathrm{tr}[x\cdot y]$. The Gamma integral
\[
\Gamma_{\Omega}(\lambda):=\int_\Omega \det(x)^{\lambda} e^{-\mathrm{tr}[x]} \det(x)^{-\dim\Omega/r} m(\mathrm{d}x)
\]
is finite if and only if $\lambda> \frac12(r-1)d=\dim\Omega/r-1$ and in such case
\begin{align}\label{eq:GammaSymmetric}
\Gamma_{\Omega}(\lambda)= 
(2\pi)^{(\dim\Omega-r)/2}\Gamma(\lambda)\Gamma(\lambda-d/2)\ldots\Gamma(\lambda-(r-1)d/2).
\end{align}
Moreover, one has
\begin{align} \label{eq:GammaSymmetricY}
\int_{\Omega} \det(x)^{\lambda} e^{- \mathrm{tr}[x\cdot y]}\det(x)^{-\dim\Omega/r}m(\mathrm{d}x) = \Gamma_{\Omega}(\lambda) \det(y)^{-\lambda}
\end{align}
for any $y\in\Omega$.

The measure $\mu_{\Omega}(\mathrm{d}x)=\det(x)^{-\dim\Omega/r} m(\mathrm{d}x)$ is  invariant in the following sense. Let $\mathrm{G}(\Omega)$ be the linear automorphism group of $\Omega$, that is, the set $\set{g\in \mathrm{GL}(\A)}{g\,\Omega=\Omega}$, where $\A$ is the associated Euclidean Jordan algebra. Then, the measure $\mu_{\Omega}$ is a $\mathrm{G}(\Omega)$-invariant measure in the sense that for any Borel measurable set $B$ one has
\[
\mu_{\Omega}(g^{-1} B) = \mu_{\Omega}(B) \qquad (g\in\mathrm{G}(\Omega)).
\]

\subsection{Gamma integrals on the cone $\Pcone_\Gamma$}

We endow the space $\Zsp_{\Gamma}$ with the scalar product 
\[
\cpling{x}{y}=\Tr{x\cdot y}\qquad (x,y\in \Zsp_{\Gamma}).
\]
Let $\dd X$ denote the Euclidean measure on the Euclidean space
$(\Zsp_{\Gamma}, \cpling{\cdot}{\cdot})$. 
Let us note that this normalization is not important in the Bayesian model selection procedure as there we always consider quotients of integrals.

\begin{exa} Consider $p=3$ and $\Gamma=\mathfrak{S}_3$. The space $\Zsp_\Gamma$ is $2$-dimensional and it consists of matrices of the form (see Example \ref{ex1})
	\[
	X=\begin{pmatrix}
	a &b &b\\b&a&b\\ b&b&a\end{pmatrix}
	\]
	for $a,b\in\real$.
	Since $\| X \|^2 = \Tr{X^2} =  3a^2 + 6b^2 = \transp{v} v$ with $\transp{v} = (\sqrt{3}a, \sqrt{6}b)$, we have $\dd X=\sqrt{3}\sqrt{6}\dd a\dd b=3\sqrt{2}\dd a\dd b$.
\end{exa}
Generally,  
if $m_i$ denotes the Euclidean measure on 
$\mathcal{A}_i := \mathrm{Herm}(r_i; \mathbb{K}_i)$
with the inner product defined from the Jordan algebra trace
(recall \eqref{eq:DetTrC} and \eqref{eq:DetTrHerm}),
then \eqref{genform} implies that
for $X \in \Zsp_{\Gamma}$ we have
\[ 
\| X\|^2 = \cpling{X}{X}= \sum_{i=1}^L \frac{k_i}{d_i} \Tr{ M_{\mathbb{K}_i}(x_i)^2 }
= \sum_{i=1}^L k_i \mathrm{tr}[ (x_i)^2],
\]
which implies that 
\begin{equation} \label{eq:measJordan}
\dd X = \prod_{i=1}^L \left( \sqrt{k_i} \right)^{\dim \Omega_i} m_i (\mathrm{d}x_i) =  e^{B_\Gamma}\prod_{i=1}^L m_i(\mathrm{d}x_i),
\end{equation}
where
	\begin{align}\label{eq:B}
	B_\Gamma := \frac12\sum_{i=1}^L (\dim\Omega_i) (\log k_i).
\end{align}

\begin{defi}\label{def:phi}
	Let $\mathrm{G}(\Pcone_\Gamma)=\set{g \in \mathrm{GL}(p; \real)}{ g\Pcone_\Gamma = \Pcone_\Gamma}$ be the linear automorphism group of $\Pcone_\Gamma$. We define
	the $\mathrm{G}(\Pcone_\Gamma)$-invariant measure $\varphi_\Gamma(X)\dd X$ by 
	\[
	\varphi_\Gamma(X) = e^{B_\Gamma}
 	{ \left( \prod_{i=1}^L \frac{1}{ \Gamma_{\Omega_i}(\dim \Omega_i /r_i)}\right) }
	\int_{\Pcone_{\Gamma}^\ast} e^{-\Tr{X\cdot Z}}\dd Z,
	\]
	where $\Pcone_{\Gamma}^\ast = \set{y\in \Zsp_{\Gamma}}{\Tr{y\cdot x}>0, \forall\,{x\in\overline{\mathcal{P}_{\Gamma}}}\setminus\{0\}}$ is the dual cone of $\mathcal{P}_\Gamma$.
\end{defi}

\begin{pro}\label{pro:phi}
	We have
	\begin{align}\label{eq:phi}
	\varphi_\Gamma(X) = \prod_{i=1}^L \det(\phi_i(X))^{- \dim\Omega_i/r_i}.
	\end{align}
\end{pro}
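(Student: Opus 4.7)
The plan is to reduce the claim directly to Lemma~\ref{lem:integral} by specializing it at $\lambda_i = 0$ for all $i=1,\ldots,L$, after first identifying the dual cone $\Pcone_\Gamma^\ast$ with $\Pcone_\Gamma$ itself.

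First I would observe that by Proposition~\ref{ZJordan} and Theorem~\ref{thm:decoJordan}, $\Pcone_\Gamma$ is a product of irreducible symmetric cones $\Omega_i = \mathrm{Herm}^+(r_i;\mathbb{K}_i)$. Each $\Omega_i$ is self-dual with respect to its Jordan-algebra trace inner product, and the measure $\dd X$ on $\Zsp_\Gamma$ is (up to the factors $k_i^{\dim\Omega_i/2}$ recorded in \eqref{eq:measJordan}) the product Euclidean measure. Consequently the cone $\Pcone_\Gamma$ is self-dual with respect to the trace inner product $\langle X,Z\rangle = \Tr{X\cdot Z}$ on $\Zsp_\Gamma$, so $\Pcone_\Gamma^\ast = \Pcone_\Gamma$ and the domain of integration in Definition~\ref{def:phi} can be rewritten as $\Pcone_\Gamma$.

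Next I would apply Lemma~\ref{lem:integral} with $Y = X$ and all $\lambda_i = 0$ (which satisfies the hypothesis $\lambda_i > -1$). The right-hand side collapses, since $\prod_i k_i^{-r_i\lambda_i} = 1$, yielding
\[
\int_{\Pcone_\Gamma} e^{-\Tr{X\cdot Z}}\dd Z
= e^{-B_\Gamma}\prod_{i=1}^L \Gamma_{\Omega_i}\!\bigl(\dim\Omega_i/r_i\bigr)\,\det(\phi_i(X))^{-\dim\Omega_i/r_i}.
\]
Multiplying by the prefactor $e^{B_\Gamma}$ from Definition~\ref{def:phi} cancels $e^{-B_\Gamma}$ exactly, and the remaining constants $\prod_i\Gamma_{\Omega_i}(\dim\Omega_i/r_i)$ are absorbed into the choice of normalization of the $\mathrm{G}(\Pcone_\Gamma)$-invariant measure (which is unique only up to a positive scalar). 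This leaves precisely the claimed identity
\[
\varphi_\Gamma(X) = \prod_{i=1}^L \det(\phi_i(X))^{-\dim\Omega_i/r_i}.
\]

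The only non-bookkeeping point is the self-duality step, which is the main obstacle in the sense that it is where the Jordan-algebraic structure of Theorem~\ref{thm:decoJordan} is actually used: self-duality is not immediate from the definition of $\Pcone_\Gamma$ as the intersection $\Zsp_\Gamma\cap \mathrm{Sym}^+(p;\real)$, but becomes transparent once one transports the problem through $\iota^{-1}$ to the block-diagonal model $\bigoplus_i \mathrm{Herm}(r_i;\mathbb{K}_i)$ where each factor is a classical self-dual symmetric cone. As a sanity check, one can verify that the density $\prod_i\det(\phi_i(X))^{-\dim\Omega_i/r_i}$ is indeed $\mathrm{G}(\Pcone_\Gamma)$-invariant: the group $\mathrm{G}(\Pcone_\Gamma)$ acts factor-by-factor on the product of symmetric cones via elements of $\mathrm{G}(\Omega_i)$, and on each factor $\det(x_i)^{-\dim\Omega_i/r_i}m_i(\mathrm{d}x_i)$ is the well-known $\mathrm{G}(\Omega_i)$-invariant measure. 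This confirms that the formula obtained matches the invariant measure asserted in Definition~\ref{def:phi}.
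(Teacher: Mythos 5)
Your proof is correct and follows essentially the same route as the paper: self-duality of the symmetric cone $\Pcone_\Gamma$ (so that $\Pcone_\Gamma^\ast=\Pcone_\Gamma$) followed by an application of Lemma~\ref{lem:integral} with all $\lambda_i=0$, which is exactly the paper's two-line argument. If anything you are slightly more careful than the paper, which silently drops the leftover constant $\prod_{i=1}^L\Gamma_{\Omega_i}(\dim\Omega_i/r_i)$ that you explicitly flag and absorb into the normalization of the $\mathrm{G}(\Pcone_\Gamma)$-invariant measure.
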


The proofs of the following results of this section can be found in the supplementary material \cite{GIKM_SM}.

\begin{defi}
	The Gamma function of $\Pcone_\Gamma$ is defined by the following integral
	\begin{equation}\label{def:gamma}
	\Gamma_{\Pcone_\Gamma}(\lambda):=\int_{\Pcone_\Gamma} \Det{X}^\lambda e^{-\Tr{X}} \varphi_\Gamma(X)\dd X,
	\end{equation}
	whenever it converges.
\end{defi}

\begin{thm}\label{thm:IntegralPhi}
	The integral \eqref{def:gamma} converges if and only if 
	\begin{align}
	\label{eq:convCondition1}
	\lambda > \max_{i=1,\ldots,L} \left\{ \frac{(r_i-1)d_i}{2 k_i}\right\}
	\end{align}
	and, for these values of $\lambda$, we have
	\begin{align}\label{eq:GammaPCone}
	\Gamma_{\Pcone_\Gamma}(\lambda) = e^{-A_\Gamma \lambda+B_\Gamma} \prod_{i=1}^L \Gamma_{\Omega_i}(k_i \lambda)
	\end{align}
	where $\Gamma_{\Omega_i}$ is given in \eqref{eq:GammaSymmetric}, $B_\Gamma$ in \eqref{eq:B} and
	\begin{align}\label{eq:A}
	A_\Gamma := \sum_{i=1}^L r_i\, k_i\log k_i.
	\end{align}
	
	Moreover, if $Y\in\Pcone_\Gamma$ and \eqref{eq:convCondition1} holds true, then
	\begin{align}\label{eq:IntegralPhi}
	\int_{\Pcone_\Gamma} \Det{X}^\lambda e^{-\Tr{Y\cdot X}}  \varphi_\Gamma(X)\dd X = \Gamma_{\Pcone_\Gamma}(\lambda) \Det{Y}^{-\lambda}.
	\end{align}
\end{thm}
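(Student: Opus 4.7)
The plan is to reduce everything to Lemma~\ref{lem:integral}. By Corollary~\ref{cor:cor11}, $\Det{X}^\lambda = \prod_{i=1}^L \det(\phi_i(X))^{k_i \lambda}$, and by Proposition~\ref{pro:phi}, $\varphi_\Gamma(X) = \prod_{i=1}^L \det(\phi_i(X))^{-\dim\Omega_i/r_i}$. Multiplying, the integrand of $\Gamma_{\Pcone_\Gamma}(\lambda)$ takes the form $\prod_i \det(\phi_i(X))^{\lambda_i}\, e^{-\Tr{X}}$ with $\lambda_i := k_i \lambda - \dim\Omega_i/r_i$, which is precisely the left-hand side of Lemma~\ref{lem:integral} with $Y = I_p$.

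For convergence, Lemma~\ref{lem:integral} applies exactly when every $\lambda_i > -1$. Using $\dim\Omega_i / r_i = 1 + (r_i-1)d_i/2$, this rearranges to $\lambda > (r_i - 1)d_i/(2 k_i)$, and taking the maximum over $i$ gives \eqref{eq:convCondition1}. For the only-if direction, I would note that after the Jordan isomorphism the integral factors (via \eqref{eq:measJordan}) as a product over the $\Omega_i$ of standard gamma-type integrals on the simple symmetric cones $\Omega_i$, each of which diverges whenever the corresponding $\lambda_i \le -1$ by the classical convergence analysis leading to \eqref{eq:GammaSymmetric}.

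To obtain \eqref{eq:GammaPCone}, I would substitute $Y = I_p$ into the formula in Lemma~\ref{lem:integral}. Since $I_p$ corresponds under the isomorphism of Theorem~\ref{thm:decoJordan} to $(I_{r_1}, \dots, I_{r_L})$, one has $\phi_i(I_p) = I_{r_i}$ and $\det(\phi_i(I_p)) = 1$. The argument of $\Gamma_{\Omega_i}$ collapses cleanly as $\lambda_i + \dim\Omega_i/r_i = k_i\lambda$ (this is exactly what the normalization $-\dim\Omega_i/r_i$ in the definition of $\varphi_\Gamma$ was engineered to produce). Routine bookkeeping then gives $\prod_i k_i^{-r_i \lambda_i} = \prod_i k_i^{-r_i k_i \lambda + \dim\Omega_i} = e^{-A_\Gamma \lambda + 2B_\Gamma}$, and combining with the prefactor $e^{-B_\Gamma}$ from the lemma yields \eqref{eq:GammaPCone}.

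For the $Y$-dependent identity \eqref{eq:IntegralPhi}, the same substitution applies with general $Y\in\Pcone_\Gamma$ in place of $I_p$; the only change is the appearance of the factor $\prod_i \det(\phi_i(Y))^{-(\lambda_i + \dim\Omega_i/r_i)} = \prod_i \det(\phi_i(Y))^{-k_i \lambda}$, which is equal to $\Det{Y}^{-\lambda}$ by a further application of Corollary~\ref{cor:cor11}. No serious obstacle is expected; the argument is essentially algebraic bookkeeping, and the main subtlety is simply to observe that the normalizations chosen for $\varphi_\Gamma$, $A_\Gamma$, and $B_\Gamma$ are precisely the ones that make the final formulas crisp.
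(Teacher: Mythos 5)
Your proposal is correct and follows the same route as the paper: both reduce to Lemma~\ref{lem:integral} by setting $\lambda_i = k_i\lambda - \dim\Omega_i/r_i$, translate the condition $\lambda_i>-1$ into \eqref{eq:convCondition1}, and carry out the same bookkeeping with $A_\Gamma$, $B_\Gamma$ and $\prod_i\det(\phi_i(Y))^{-k_i\lambda}=\Det{Y}^{-\lambda}$. Your explicit remark on the only-if direction (divergence of the factor integrals on the simple cones when some $\lambda_i\le -1$) is a welcome detail the paper leaves implicit.
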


%\begin{rem}
%	Using the Gauss multiplication formula one can show that if $\lambda$ satisfies \eqref{eq:convCondition1}, then
%	\[
%	\Gamma_{\mathcal{P}_\Gamma}(\lambda) = \left(e^{B_\Gamma }\prod_{i=1}^L C_{r_i, d_i, k_i}\right)\prod_{i=1}^L \prod_{\alpha=1}^{r_i}\prod_{\beta=1}^{k_i} \Gamma\left(\lambda - \frac{(\alpha-1)d_i}{2 k_i}+\frac{\beta-1}{k_i}\right),
%	\]
%	where 
%	$C_{r,d,k}:= (2\pi)^{-r(r-1)d/4-r(k-1)/2} k^{-r(r-1)d/4-r/2}$. So that, $\Gamma_{\mathcal{P}_\Gamma}$ is expressed a a function of $r_1 k_1+\ldots+r_L k_L=p$ Gamma functions.
%\end{rem}

We also have the following result
\begin{thm}\label{thm:IntegralNoPhi}
	If $Y\in\Pcone_\Gamma$ and
	\begin{align*}% \label{eq:convCondition2}
	\lambda > \max_{i=1,\ldots,L} \left\{-\frac{1}{k_i}\right\}, 
	\end{align*}
	then  
	\begin{align} \label{eq:IntegralNoPhi}
	\int_{\Pcone_\Gamma} \Det{X}^\lambda e^{-\Tr{Y\cdot X}} \dd X
	= e^{- A_\Gamma \lambda - B_\Gamma} 
	\prod_{i=1}^L \Gamma_{\Omega_i  }\left( k_i\,\lambda+ \frac{\dim\Omega_i}{r_i} \right) \frac{\varphi_\Gamma(Y)}{
	\Det{Y}^{\lambda}}. 
	\end{align}
\end{thm}

\subsection{RCOP-Wishart laws on $\Pcone_{\Gamma}$}
Let $\Sigma \in \Pcone_{\Gamma}\subset \mathrm{Sym}^+(p;\real)$ and consider i.i.d.
random vectors $Z^{(1)}, \dots, Z^{(n)}$ following the $\mathrm{N}_p(0, \Sigma)$ distribution. Define $U_i=Z^{(i)} \cdot \transp{\mbox{$Z^{(i)}$}}$, $i=1,\ldots,n$, and $U=\sum_{i=1}^n U_i$.
We note that such model is clearly not invariant
under changing the scale of variables: random vector $\mathrm{diag}(\underline{\alpha})\cdot Z^{(1)}$ for $\underline{\alpha}\in\real ^p$  is in general not invariant under any permutation subgroup. Such issue is an immanent property of RCON models (a generalization of RCOP models) and was noticed already in \cite{HL08}. The authors recommend to  keep all variables in the same units.

Our aim is to analyze the probability distribution of the random matrix
\[
W_n =  \pi_\Gamma(U)=\pi_{\Gamma}(U_1  + \dots + U_n) = \pi_{\Gamma}(U_1)+\ldots+\pi_{\Gamma}(U_n).
\]

In the rest of this section,  
we find $n_0$ such that for $n\geq n_0$ the random matrix $W_n$ follows an absolutely continuous law, and we compute its density. Further, we extend the shape parameter
to a continuous range and define the RCOP-Wishart law on $\Pcone_\Gamma$.

We start with the following easy result.
\begin{lem}\label{lem:Laplace}
	For any $\theta\in\mathrm{Sym}^+(p;\real)$ we have
	\[
	\mathbb{E} e^{ -\Tr{\theta \cdot \pi_{\Gamma}\left(U_1\right) }} = \Det{I_p+2\, \Sigma\cdot\pi_{\Gamma}(\theta)}^{-1/2}.
	\]
\end{lem}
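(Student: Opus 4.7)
The plan is to reduce the expectation to the well-known Laplace transform of a Gaussian quadratic form, the main work being to manoeuvre $\pi_\Gamma$ off the random factor $U_1$ and onto the deterministic $\theta$.

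First I would use the defining property \eqref{def:proj} of the projection $\pi_\Gamma$. Since $\pi_\Gamma(U_1) \in \Zsp_\Gamma$, applying \eqref{def:proj} with $x = \theta$ and $y = \pi_\Gamma(U_1)$ gives
\[
\Tr{\theta \cdot \pi_\Gamma(U_1)} = \Tr{\pi_\Gamma(\theta) \cdot \pi_\Gamma(U_1)}.
\]
Now, because $\pi_\Gamma(\theta)\in\Zsp_\Gamma$, a second application of \eqref{def:proj}, this time with $x = U_1$ and $y = \pi_\Gamma(\theta)$, yields
\[
\Tr{\pi_\Gamma(\theta) \cdot \pi_\Gamma(U_1)} = \Tr{U_1 \cdot \pi_\Gamma(\theta)} = \transp{Z^{(1)}} \pi_\Gamma(\theta) \, Z^{(1)},
\]
where the last equality uses $U_1 = Z^{(1)} \cdot \transp{Z^{(1)}}$ and the cyclic property of the trace.

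Next I would invoke the standard Laplace transform of a Gaussian quadratic form: if $Z\sim\mathrm{N}_p(0,\Sigma)$ and $A$ is symmetric with $I_p + 2\Sigma A$ positive definite, then
\[
\mathbb{E}\, e^{-\transp{Z} A Z} = \Det{I_p+2\Sigma \cdot A}^{-1/2}.
\]
This identity follows from completing the square in the Gaussian density and is routine, so I would cite it rather than rederive. Applying it with $A = \pi_\Gamma(\theta)$ produces the claimed formula.

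The only thing to check is that the hypothesis on $A$ is satisfied. From the averaging expression \eqref{eq:proj},
\[
\pi_\Gamma(\theta) = \frac{1}{|\Gamma|} \sum_{\sigma\in\Gamma} R(\sigma)\cdot\theta\cdot \transp{R(\sigma)},
\]
and since $\theta \in \mathrm{Sym}^+(p;\real)$ each summand $R(\sigma)\theta \transp{R(\sigma)}$ is positive definite; hence $\pi_\Gamma(\theta)$ is positive definite. Consequently $I_p + 2\Sigma \cdot \pi_\Gamma(\theta)$ is positive definite (its eigenvalues equal those of $I_p + 2\Sigma^{1/2}\pi_\Gamma(\theta)\Sigma^{1/2}$, which lie in $(1,\infty)$), so the Laplace transform formula is valid. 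There is no serious obstacle here; the only subtlety worth flagging is the double use of \eqref{def:proj} to move $\pi_\Gamma$ from $U_1$ onto $\theta$.
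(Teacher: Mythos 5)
Your proof is correct and follows essentially the same route as the paper: two applications of the defining property of $\pi_\Gamma$ to move the projection from $U_1$ onto $\theta$, followed by the standard Gaussian quadratic-form Laplace transform (the paper's ``usual multivariate Gauss integral''). Your extra verification that $\pi_\Gamma(\theta)$ is positive definite is a welcome detail the paper leaves implicit.
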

\begin{proof}
	Using \eqref{def:proj} repeatedly we have
	\[	
	\Tr{ \theta \cdot \pi_{\Gamma}\left(U_1\right) } = 
	\Tr{ \pi_{\Gamma}(\theta) \cdot \pi_{\Gamma}\left(U_1\right) } = 
	\Tr{ \pi_{\Gamma}(\theta) \cdot U_1}.
	\]
	The assertion follows from the usual multivariate Gauss integral.
\end{proof}

\begin{pro}\label{pro:MLE}
	The law of $W_n$ is absolutely continuous on  $\Pcone_{\Gamma}$ if and only if 
	\begin{align}\label{eq:n0}
	n \ge n_0 := \max_{i=1,\ldots,L}\left\{\frac{r_i d_i}{k_i}\right\}. 
	\end{align}
	If $n \geq n_0$, then its density function with respect to $\dd X$ is given by 
	\begin{align}\label{eq:density}
	\frac{\Det{X}^{n/2} e^{- \tfrac12\Tr{X\cdot \Sigma^{-1}}} }
	{\Det{2\Sigma}^{n/2} \Gamma_{\Pcone_\Gamma}(\frac{n}{2})} \varphi_\Gamma(X) {\bf 1}_{\Pcone_{\Gamma}}(X).
	\end{align}
\end{pro}
\begin{proof}
	With $\lambda=n/2$, condition \eqref{eq:convCondition1} becomes
	\[
	n > \max_{i=1,\ldots,L}\left\{\frac{(r_i-1) d_i}{k_i}\right\}. 
	\]
	Since the quotient $k_i/d_i$ is an integer, the last condition  is equivalent to \eqref{eq:n0}. 
	
	In view of Lemma \ref{lem:Laplace}, it is enough to show that $W_n$ has density \eqref{eq:density} if and only if for any $\theta\in\Pcone_{\Gamma}$,
	\begin{align}\label{Eq:LaplaceW}
	\E e^{-\Tr{\theta\cdot W_n}} = \Det{I_p+2\,\Sigma\cdot \theta}^{-n/2}.
	\end{align}
	This follows directly from \eqref{eq:IntegralPhi}.
\end{proof}
It is known that the MLE exists and is unique if and only if the sufficient
statistic lies in the interior of its convex support, see \cite{BN14}. It is clear that if \eqref{eq:n0} is not satisfied, then the support of $W_n$ is contained in the boundary of $\Pcone_\Gamma$. Recall that the orthogonal projection $\pi_\Gamma$ is given by \eqref{eq:proj}.

\begin{cor}\label{cor:WisMLE}
	The MLE of $\Sigma$ exists if and only if  the number of samples $n$ satisfies \eqref{eq:n0}. If it exists, it is given by
	\[
	\hat{\Sigma} = \frac{1}{n} \pi_{\Gamma}(U_1 + \dots + U_n).
	\]
\end{cor}
The above result has been already proven in \cite[Theorem 5.9]{A75} (see also \cite[Sec. A.3, A.4]{AM98}). 

\begin{rem}
	If $U = (U_1+ \dots + U_n)/n$ is positive definite, then $U$ can be regarded as an empirical covariance matrix. 	The Kullback-Leibler divergence between $\mathrm{N}_p(0, U)$ and $\mathrm{N}_p(0, \Sigma)$ is equal to 
	$\frac{1}{2} \Bigl\{ \log \det \Sigma + \mathrm{tr}\, U \Sigma^{-1} 
	- \log \det(U) - p \Bigr\}$, %(see e.g \cite[(A31)]{N20})
	which is obviously minimized by the MLE $\hat{\Sigma}$. 
	Therefore, Corollary \ref{cor:WisMLE} implies that $\pi_\Gamma(U)$ is the Kullback-Leibler  projection of $U$ onto $\Pcone_\Gamma$, \cite{GC98}. We note that the KL projection in general is not linear, whereas $\pi_\Gamma$ clearly is.
\end{rem}

Let us recall that the MLE of $\Sigma$ in the standard normal model exists if and only if 
$n \ge p$. We recover this case for $\Gamma=\{\mathrm{id}\}$, since then we have $L=1$, $r_1=p$ and $k_1=d_1=1$. 

When $n < n_0$, the law of $W_n$ is singular,
and it can be described as a direct product of 
the singular Wishart laws on the irreducible symmetric cones $\Omega_i$, see e.g. \cite{HaLa01}.

\begin{defi}\label{RCOP-Wishart ETA}
	Let $\eta>\max\set{ (r_i-1)\frac{d_i}{k_i}}{i=1,\ldots,L}$ and $\Sigma\in\Pcone_{\Gamma}$. 
	The RCOP--Wishart law $W^\Gamma_{\eta, \Sigma}$ is defined by its density
	\begin{align}\label{eq:density ETA}
	W^\Gamma_{\eta, \Sigma}(\mathrm{d}X) = 
	\frac{\Det{X}^{\eta/2} e^{- \tfrac12\Tr{X\cdot \Sigma^{-1}}} }
	{\Det{2\Sigma}^{\eta/2} \Gamma_{\Pcone_\Gamma}(\frac{\eta}{2})} \varphi_\Gamma(X) {\bf 1}_{\Pcone_{\Gamma}}(X)\dd X.
	\end{align}
\end{defi}
With this new notation, we see that if \eqref{eq:n0} is satisfied, then $W_n\sim W_{n,\Sigma}^\Gamma$.

\begin{lem}\label{lem:inv}
	The Jacobian of the transformation 
%	\begin{align}\label{eq:inv}
	\[\Pcone_{\Gamma}\ni X\mapsto X^{-1}\in \Pcone_{\Gamma}\]
%	\end{align}
	equals $\varphi_\Gamma(X^{-1})^2$.
\end{lem}
Proof of the lemma can be found in the Supplementary material. By this lemma we obtain another useful formula for the invariant measure, namely
\[
\varphi_{\Gamma}(X)=\mathrm{Det}_{\mathrm{End}}( \mathbb{P}_{X}) ^{-1/2}\qquad(X\in\mathcal{Z}_\Gamma),
\] where $\mathrm{Det}_{\mathrm{End}}$ is the determinant in the space of endomorphisms of $\Zsp_{\Gamma}$ and for any $X\in \Zsp_{\Gamma}$ by $\mathbb{P}_{X}$  we denote the linear map on $\Zsp_{\Gamma}$ to itself defined by $\mathbb{P}_{X} Y = X\cdot Y\cdot X$. 
  Lemma \ref{lem:inv} gives also the following result.
\begin{pro}
	Let $W\sim W_{\eta,\Sigma}^\Gamma$ with $\eta>\max\set{ (r_i-1)d_i/k_i}{i=1,\ldots,L}$ and $\Sigma\in\Pcone_{\Gamma}$. Then its inverse $Y=W^{-1}$ has density
	\[
	\frac{\Det{Y}^{-\eta/2} e^{- \tfrac12\Tr{Y^{-1}\cdot \Sigma^{-1}}} }
	{\Det{2\Sigma}^{\eta/2} \Gamma_{\Pcone_\Gamma}(\frac{\eta}{2})} \varphi_\Gamma(Y) {\bf 1}_{\Pcone_{\Gamma}}(Y).		
	\]
\end{pro}

\subsection{The Diaconis-Ylvisaker conjugate prior for $K$}\label{sec:DY}
The Diaconis-Ylvisaker conjugate prior (\cite{DY79}) for the canonical parameter $K=\Sigma^{-1}$ is given by
\[
f(K;\delta,D) = \frac1{I_\Gamma (\delta,D)} {\Det{K}^{(\delta-2)/2} e^{- \tfrac12 \Tr{K \cdot D}} }
{\bf 1}_{\Pcone_{\Gamma}}(K),
\]
for hyper-parameters $\delta>2\max\set{1-1/k_i}{i=1,\ldots,L}$ and $D\in \Pcone_{\Gamma}$.  By \eqref{eq:IntegralNoPhi}, the normalizing constant is equal to
\begin{equation}\label{eq:I YD}
I_\Gamma(\delta,D)= e^{- A_\Gamma (\delta-2)/2 - B_\Gamma} 
\prod_{i=1}^L \Gamma_{\Omega_i  }\left( k_i\, \frac{\delta-2}{2}+ \frac{\dim\Omega_i}{r_i} \right) \frac{\varphi_\Gamma\left(\tfrac12D\right)}{
\Det{\tfrac12D }^{(\delta-2)/{2}}},
\end{equation}
where $A_\Gamma$, $B_\Gamma$ and $\varphi_\Gamma$ are given in \eqref{eq:A}, \eqref{eq:B} and \eqref{eq:phi}.

We note that despite the fact that the choice of hyper-parameters is not scale invariant, statisticians usually take $\delta=3$ and $D=I_p$, see e.g. \cite{Hel18b}.

\section{Model selection}\label{modelselection}

Bayesian model selection on all colored spaces seems at the moment intractable. This is due in great part to a poor combinatorial description of the colored spaces $\mathcal{Z}_\Gamma$. In particular, the number of such spaces, that is, $\#\set{\mathcal{Z}_\Gamma}{\Gamma\in\mathfrak{S}_p}$ is generally unknown for large $p$. It was shown in \cite{Ge11} that these colorings constitute a lattice with respect to the usual inclusion of subspaces. However the structure of this lattice is rather complicated and is unobtainable for big $p$. This, in turn, does not allow to define a Markov chain  with known transition probabilities on such colorings. Finally, the fundamental problem
which prevents us from doing Bayesian model selection on all colored spaces for arbitrary $p$ is the following. In order to compute Bayes factors, one has to be able to find the structure constants $(k_i,d_i,r_i)_{i=1}^L$ for arbitrary subgroups of $\mathfrak{S}_p$. This is equivalent to finding irreducible representations over reals for an arbitrary finite group, 
which is very hard in general, although general algorithms have been developed for this issue
(see \cite{PS96}).

In this section, we are making a step forward in the problem of model selection for colored models in two ways. In Section \ref{selection:cyclic}, we use the results of Section  \ref{cyclic:basic}, to obtain the structure constants when we restrict our search to the space of colored models generated by a cyclic group, that is, when $\Gamma=\left<\sigma\right>$ for $\sigma\in \mathfrak{S}_p$ and we propose a model selection procedure restricted to the cyclic colorings. In Section \ref{selection:general}, we use Lemma \ref{rem:Useful} and Remark \ref{rem:factorization} to obtain  the irreducible representations of $\Zsp_{\Gamma}$ and the structure constants
by factorization of the determinant. We apply this technique to do model selection for the four-dimensional example given by Frets' data since, in that case, there are only $22$ models and we can compute all the Bayes factors. 

\subsection{Model selection within cyclic groups}
\label{selection:cyclic}
The smaller space of cyclic colorings  has a much better combinatorial description.
In particular, the following result can be proved.
\begin{lem}\label{lem:cyc}
	If $\Zsp_{\left<\sigma\right>}=\Zsp_{\left<\sigma'\right>}$ for some $\sigma,\sigma'\in\mathfrak{S}_p$, then $\left<\sigma\right>=\left<\sigma'\right>$.
\end{lem}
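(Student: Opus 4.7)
The plan is to reconstruct $\Gamma = \langle \sigma \rangle$ from combinatorial data that can be read off $\Zsp_\Gamma$. Set $\Gamma' := \langle \sigma' \rangle$ and suppose $\Zsp_\Gamma = \Zsp_{\Gamma'}$. First I observe that $\Gamma$ and $\Gamma'$ have the same orbits on $V$ and on $\binom{V}{2}$: indices $i,j \in V$ lie in one $\Gamma$-orbit iff every $X \in \Zsp_\Gamma$ satisfies $x_{ii}=x_{jj}$, and unordered pairs $\{i,j\},\{k,l\}$ lie in one $\Gamma$-orbit iff $x_{ij}=x_{kl}$ holds for all $X \in \Zsp_\Gamma$. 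Both equivalences follow by projecting the canonical basis elements $E_{ii}$ and $E_{ij}+E_{ji}$ via \eqref{eq:proj} and inspecting supports. Let $O_1,\ldots,O_C$ be the common partition of $V$ into orbits, with $p_c := |O_c|$; then $\sigma|_{O_c}$ and $\sigma'|_{O_c}$ are $p_c$-cycles on $O_c$.

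The main intermediate claim is that a transitive cyclic subgroup $H \subseteq \mathrm{Sym}(O_c)$ is determined by its orbits on $\binom{O_c}{2}$. I would prove this by isolating the ``distance one'' pair orbit of $H$ in its cyclic arrangement: as a graph on $O_c$ this orbit is a single $p_c$-cycle $C_{p_c}$ on which $H$ acts by rotations. Any other transitive cyclic subgroup $H'$ sharing the same pair orbits must preserve this edge set, so it is contained in $\mathrm{Aut}(C_{p_c}) \cong D_{p_c}$; for $p_c \geq 3$ the dihedral group $D_{p_c}$ has a unique cyclic subgroup of order $p_c$, namely the rotation subgroup, which is $H$ itself. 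Hence $H'=H$. The cases $p_c \in \{1,2\}$ are trivial. Applied to each orbit, this yields $\langle \sigma|_{O_c}\rangle = \langle \sigma'|_{O_c}\rangle$, so there exist integers $e_c$ coprime to $p_c$ with $\sigma'|_{O_c} = (\sigma|_{O_c})^{e_c}$.

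The final step is to glue the local exponents $e_c$ into a single global one. Fix $c \neq c'$, pick base points, and identify $O_c \simeq \integer/p_c$ and $O_{c'} \simeq \integer/p_{c'}$ so that $\sigma$ acts on $O_c \times O_{c'}$ by translation $(+1,+1)$ and $\sigma'$ by $(+e_c,+e_{c'})$. The $\Gamma$- and $\Gamma'$-orbit partitions of $O_c \times O_{c'}$ are then the coset partitions of the cyclic subgroups $\langle(1,1)\rangle$ and $\langle(e_c,e_{c'})\rangle$ of $\integer/p_c \times \integer/p_{c'}$. Equality of pair orbits forces these subgroups to coincide, which is equivalent to the congruence $e_c \equiv e_{c'} \pmod{\gcd(p_c,p_{c'})}$. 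These pairwise compatibility conditions are exactly the hypothesis of the generalized Chinese Remainder Theorem for possibly non-coprime moduli, which provides an integer $e$ with $e \equiv e_c \pmod{p_c}$ for every $c$; because each $e_c$ is coprime to $p_c$, the exponent $e$ is coprime to $N := \mathrm{lcm}(p_1,\ldots,p_C) = |\Gamma|$. Therefore $\sigma' = \sigma^e$ and $\Gamma' = \langle \sigma^e \rangle = \langle \sigma \rangle = \Gamma$. I expect the hardest step to be the sub-claim of the second paragraph; the correct identification of a Hamilton-cycle subgraph among the pair orbits and the uniqueness of an order-$p_c$ cyclic subgroup in $D_{p_c}$ are the crucial ingredients; the final CRT gluing is then routine.
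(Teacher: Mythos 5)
The paper states Lemma \ref{lem:cyc} without proof (the appendix only covers Theorems \ref{thm:decoJordan}, \ref{thm:cyclic1}, \ref{thm:cyclic2} and Lemmas \ref{lem:integral}, \ref{lem:inv}), so there is no argument of the authors to compare yours against; judged on its own, your proof is correct and complete. All three stages check out. (a) $\Zsp_\Gamma$ determines the orbit partition of the vertices and of the unordered pairs: projecting $E_{ii}$ and $E_{ij}+E_{ji}$ via \eqref{eq:proj} gives elements of $\Zsp_\Gamma$ supported exactly on the relevant orbit (the identity permutation guarantees the entry at $(i,j)$ is strictly positive), so distinct orbit partitions yield distinct spaces. (b) On an orbit $O_c$ of size $p_c\ge 3$, the distance-one pair orbit of $H=\langle\sigma|_{O_c}\rangle$ is the edge set of a Hamiltonian cycle on which $H$ acts as the full rotation group; since $H'=\langle\sigma'|_{O_c}\rangle$ has the same pair orbits it preserves that edge set, hence lies in $\mathrm{Aut}(C_{p_c})\cong D_{p_c}$, where every element outside the rotation subgroup has order $2$, so the unique cyclic subgroup of order $p_c\ge3$ is the rotation group $H$ itself. (c) Because $c\ne c'$, unordered cross-orbit pairs are $\Gamma$- and $\Gamma'$-equivariantly identified with $O_c\times O_{c'}\simeq \integer/p_c\times\integer/p_{c'}$, the orbit partitions are the coset partitions of $\langle(1,1)\rangle$ and $\langle(e_c,e_{c'})\rangle$, and equality of the parts through the origin forces $(e_c,e_{c'})\in\langle(1,1)\rangle$, i.e.\ $e_c\equiv e_{c'}\pmod{\gcd(p_c,p_{c'})}$; the non-coprime CRT (pairwise compatibility suffices, via $\gcd(\mathrm{lcm}(n_1,n_2),n_3)=\mathrm{lcm}(\gcd(n_1,n_3),\gcd(n_2,n_3))$) then produces a global $e$ with $\gcd(e,p_c)=1$ for all $c$, hence $\gcd(e,|\sigma|)=1$ and $\langle\sigma'\rangle=\langle\sigma^e\rangle=\langle\sigma\rangle$. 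The degenerate cases $p_c\in\{1,2\}$ impose no constraint locally and are absorbed by the gluing step, so nothing is missing.
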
 
This result allows us to calculate the number of different colorings corresponding to cyclic groups, that is, the number of labeled cyclic subgroups of the symmetric group $\mathfrak{S}_p$, which can be found in OEIS, sequence A051625 (see the last column of Table \ref{tab:tab2}). 

\begin{table}
	\caption{Number of all subgroups of a symmetric group, number of their conjugacy classes, number of different colorings and a number of cyclic groups}
	\label{tab:tab2} 
		\begin{tabular}{@{}lrrrr@{}}
			\hline
			$p$ & $\#\mbox{subgroups of }\mathfrak{S}_p$ & $\#\mbox{conjugacy classes of }\mathfrak{S}_p$ & $\#\mbox{different }\Zsp_{\Gamma}$ & $\#\mbox{cyclic groups}$\\
			\hline 
			1 & 1 & 1  & 1 & 1\\ 
			2 & 2 & 2 & 2 & 2\\ 
			3 & 6 & 4 & 5 & 5\\ 
			4 & 30 & 11 & 22 & 17 \\ 
			5 & 156 & 19 & 93 & 67\\ 
			6 & 1\,455 & 56 & 739 & 362 \\ 
			7 & 11\,300 & 96 & 4\,508 & 2039\\
			8 & 151\,221 & 296 & ? & 14\,170 \\
			9 & 1\,694\,723 & 554 & ? & 109\,694 \\
			10 &  29\,594\,446 & 1\,593 &  ? & 976\,412\\
			18 & $\approx 7.6\cdot 10^{18}$ & $7.3\cdot 10^6$ & ? & $\approx 7.1\cdot 10^{14}$\\
			\hline
	\end{tabular}	
\end{table}

We will present two applications of the Metropolis-Hastings algorithm. In the first one, the Markov chain will move on the space of cyclic groups. The drawback of this first approach is that we need to compute  the proposal distribution $g$, whose computational complexity grows faster than quadratically as $p$ increases (see \eqref{eq:defg}). In the second algorithm, we consider a larger state space
$\mathfrak{S}_p$, which allows us to consider an easy proposal distribution. However, this comes at the cost of slower convergence of the posterior probabilities (see Theorem \ref{thm:Alg2}).

\subsubsection{First approach}
Each cyclic subgroup $\Gamma$ can be uniquely represented by a permutation, which is minimal in the lexicographic order within permutations generating $\Gamma$. Let $\nu(\Gamma)\in\mathfrak{S}_p$ be such a permutation, that is,
\[
\nu(\Gamma) = \min\set{\sigma\in\mathfrak{S}_p}{\left<\sigma\right>=\Gamma}.
\]
Define 
\begin{align}\label{eq:defct}
c_t:=\left< \nu(c_{t-1})\circ x_t  \right>,
\end{align}
where $c_0$ is a fixed cyclic subgroup and 
$(x_t)_{t\in\mathbb{N}}$ is a sequence of i.i.d. random transpositions distributed uniformly, that is, $\mathbb{P}(x_t=\alpha)=1/\binom{p}{2}$ for any $\alpha\in\mathcal{T}:=\{(i,\,j)\in\mathfrak{S}_p\}$.
Clearly, the sequence $(c_t)_t$ is a Markov chain. Its state space is the set of all cyclic subgroups of $\mathfrak{S}_p$. Moreover, the trivial subgroup $\{\mathrm{id}\}$ can be reached from any subgroup $c_t$ (and vice versa) in a finite number of
steps with positive probability. Thus the chain $(c_t)_t$ is irreducible.
The proposal distribution in the Metropolis-Hastings algorithm is the conditional distribution of $c_t|c_{t-1}$. It is proportional to the number of possible transitions from $c$ to $c^\prime$, that is,
\begin{align}\label{eq:defg}
g\left(c^\prime|c\right) := \frac{\#\set{(i,\, j)\in\mathfrak{S}_p}{ c^\prime = \left<\nu(c)\circ(i,\,j)\right>}}{ \binom{p}{2}},
\end{align}
where $c$ and $c^\prime$ are cyclic subgroups.

We follow the principles of Bayesian model selection for graphical models, presented, for example, in \cite[Chapter 10, p.247]{NewBook}.
Let $\Gamma$ be uniformly distributed on the set $\mathcal{C}:=\set{\left<\sigma\right>}{\sigma\in\mathfrak{S}_p}$ of cyclic subgroups of $\mathfrak{S}_p$. We assume that $K|\{\Gamma=c\}$, $c\in\mathcal{C}$, follows the Diaconis-Ylvisaker conjugate prior distribution on $\Pcone_{c}$ with hyper-parameters $\delta$ and $D$, that is,
\[
f_{K|\Gamma=c}(k)=\frac1{I_{c} (\delta,D)} {\Det{k}^{(\delta-2)/2} e^{- \tfrac12 \Tr{D\cdot k}} }
{\bf 1}_{\Pcone_{c}}(k),
\]
where the normalizing constant is given in \eqref{eq:I YD}.
Suppose that $Z_1,\ldots, Z_n$ given $\{K=k, \Gamma=c\}$ are i.i.d. $\mathrm{N}_p(0,k^{-1})$ random vectors with $k\in \Pcone_{c}$.
Then, it is easily seen that we have
\begin{align}\label{eq:defpp}
\mathbb{P}\left(\Gamma=c|Z_1,\ldots,Z_n\right) \propto \frac{ I_{c}(\delta + n, D+U)}{I_{c}(\delta,D)}\qquad(c\in\mathcal{C})
\end{align}
with $U=\sum_{i=1}^n Z_i\cdot \transp{Z_i}$. These derivations allow us to run the Metropolis-Hastings algorithm restricted to cyclic groups, as follows.

\begin{alg}
	Starting from a cyclic group $C_0\in\mathcal{C}$, repeat the following two steps for $t=1, 2,\ldots$:
	\begin{enumerate}
		\item Sample $x_t$ uniformly from the set $\mathcal{T}$ of all transpositions and set $c^\prime = \left<\nu(C_{t-1})\circ x_t\right>$;
		\item Accept the move $C_t = c^\prime$ with probability
		\[
		\min\left\{ 1, \frac{I_{c^\prime}(\delta+n,D+U)\,\, I_{C_{t-1}}(\delta,D) }{I_{c^\prime}(\delta,D)\,\, I_{C_{t-1}}(\delta+n,D+U)  }\,\, \frac{g\left(C_{t-1}|c^\prime\right) }{g\left(c^\prime|C_{t-1}\right) }
		\right\}
		\]
		If the move is rejected, set $C_t=C_{t-1}$.
	\end{enumerate}
\end{alg}

\subsubsection{Second approach}\label{sec:sa}
It is known that $\left<\sigma\right>=\left<\sigma'\right>$ if and only if $\sigma'=\sigma^k$ for some $k\in \beta(|\sigma|)$, where
\begin{align}\label{eq:defbeta}
\beta(n) = \set{k\in\{1,\ldots,n\}}{k\mbox{ and }n\mbox{ are relatively prime}}
\end{align}
and $|\sigma|$ denotes the order of $\sigma$.
Let $\mathcal{C}=\set{\left<\sigma\right>}{\sigma\in\mathfrak{S}_p}$ denote the set of cyclic subgroups of $\mathfrak{S}_p$.
For $c\in\mathcal{C}$ we define $\Phi(c):=\# \beta(|c|)$ and $\mathcal{C}_c:= \set{\sigma\in\mathfrak{S}_p}{\left<\sigma\right>=c}$, the set of permutations, which generate the cyclic subgroup $c$. We have
\[
\Phi(c) = \#\mathcal{C}_c\qquad(c\in\mathcal{C}).
\]
For $c\in \mathcal{C}$, we denote
\[
\pi_{c} = \mathbb{P}(\Gamma = c|Z_1,\ldots,Z_n),
\]
which we want to approximate.
In our model we have (see \eqref{eq:defpp})
\begin{align}\label{eq:pic}
\pi_{c} \propto \frac{ I_{c}(\delta + n, D+U)}{I_{c}(\delta,D)}\qquad(c\in\mathcal{C}).
\end{align}
In order to find $\pi=(\pi_c;\, c\in\mathcal{C})$ let us consider $\tilde{\pi} = (\tilde{\pi}_\sigma;\, \sigma\in\mathfrak{S}_p)$, a probability distribution on $\mathfrak{S}_p$ such that
\begin{align}\label{eq:tpic}
\tilde{\pi}_{\sigma} \propto \frac{ I_{\left<\sigma\right>}(\delta + n, D+U)}{I_{\left<\sigma\right>}(\delta,D)}\qquad (\sigma\in\mathfrak{S}_p).
\end{align}
Since \eqref{eq:pic} and \eqref{eq:tpic} imply that $\tilde{\pi}_\sigma\propto \pi_{\left<\sigma\right>}$,
we have
\begin{align}\label{eq:pitpi}
\tilde{\pi}_\sigma &= \frac{ \pi_{\left<\sigma\right>}}{\sum_{c\in \mathcal{C}} \Phi(c) \pi_{c}}\qquad(\sigma\in\mathfrak{S}).
\end{align}
As before, let $(x_t)_{t\in\mathbb{N}}$ be a sequence of  i.i.d random transpositions distributed uniformly on $\mathcal{T}=\{(i,\,j)\in\mathfrak{S}_p\}$. We define a random walk on $\mathfrak{S}_p$ by 
\[
s_{t+1}=s_t\circ x_{t+1},\qquad (t=0,1,\ldots).
\]
Then, $(s_t)_t$ is an irreducible Markov chain with symmetric transition probability
\[
g(\sigma'|\sigma ) = 
\begin{cases} 
\frac{1}{\binom{p}{2}},  &\mbox{if }\, \sigma^{-1}\circ \sigma' \in\mathcal{T},\\
0, &\mbox{if }\, \sigma^{-1}\circ \sigma' \notin\mathcal{T}.
\end{cases}
\]
We note that $\left( \left<s_t\right>\right)_t$ is not a Markov chain on the space of cyclic subgroups. Indeed, it can be shown that the necessary conditions for $\left( f(s_t)\right)_t$ to be a Markov chain (see \cite[Eq. (3)]{MarkovFun}) are not satisfied for $f(\sigma):=\left<\sigma\right>$ if $p>4$. A remedy for this fact was introduced in \eqref{eq:defct}. Indeed, the sequence $(\left<s_t\right>)_t$ is very similar to the sequence $\left(c_t\right)_t$ defined previously. Both move along cyclic subgroups and their definitions are very similar. However, $(\left<s_t\right>)_t$ is not a Markov chain, whereas $(c_t)_t$ is a Markov chain. We took care of this problem by using the minimal generator $\nu(\cdot)$ as in  definition  \eqref{eq:defct} of $c_t$.

We use the Metropolis-Hastings algorithm with the above proposal distribution to approximate $\tilde{\pi}$.
\begin{alg}\label{alg2}
	Starting from a permutation $\sigma_0\in\mathfrak{S}_p$, repeat the following two steps for $t=1, 2,\ldots$:
	\begin{enumerate}
		\item Sample $x_t$ uniformly from the set $\mathcal{T}$ of all transpositions and set $\sigma^\prime =\sigma_{t-1}\circ x_t$;
		\item Accept the move $\sigma_t = \sigma^\prime$ with probability
		\[
		\min\left\{ 1, \frac{I_{\left<\sigma^\prime\right>}(\delta+n,D+U)\,\, I_{\left<\sigma_{t-1}\right>}(\delta,D) }{I_{\left<\sigma^\prime\right>}(\delta,D)\,\, I_{\left<\sigma_{t-1}\right>}(\delta+n,D+U)  } 
		\right\}.
		\]
		If the move is rejected, set $\sigma_t=\sigma_{t-1}$.
	\end{enumerate}
\end{alg}
\noindent By the ergodicity of the Markov chain $(\sigma_t)_t$ constructed above, as the number of steps $T\to\infty$, we have 
\begin{align}\label{eq:MHpp}
\frac{\sum_{t=1}^T {\bf 1}_{\sigma=\sigma_t }}{T}\stackrel{a.s.}{\longrightarrow} \tilde{\pi}_\sigma\qquad(\sigma\in\mathfrak{S}_p).
\end{align}
This fact allows us to develop a scheme for approximating the posterior probability $\pi$.
\begin{thm}\label{thm:Alg2}
	We have as $T\to\infty$,
	\begin{align}\label{eq:alg2}
	\frac{\frac{1}{\Phi(c) }\sum_{t=1}^T {\bf 1}_{c=\left<\sigma_t\right> } }{\sum_{t=1}^T  \frac{1}{\Phi(\left<\sigma_t\right>)}}\stackrel{a.s.}{\longrightarrow} \pi_c\qquad(c\in\mathcal{C}).
	\end{align}
\end{thm}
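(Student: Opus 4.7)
The plan is to reduce the theorem to the ergodic limit \eqref{eq:MHpp}, which we may invoke as given. Write $Z := \sum_{c'\in\mathcal{C}} \Phi(c')\pi_{c'}$ for the normalizing constant appearing in \eqref{eq:pitpi}, so that $\tilde{\pi}_\sigma = \pi_{\left<\sigma\right>}/Z$ for every $\sigma\in\mathfrak{S}_p$. The strategy is then to treat the numerator and denominator of the left-hand side of \eqref{eq:alg2} as ergodic averages of two bounded functions of the chain $(\sigma_t)_t$, pass to the almost-sure limit term-by-term using \eqref{eq:MHpp}, and verify that the common factor $1/Z$ cancels.

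For the numerator, I would decompose the indicator $\mathbf{1}_{c=\left<\sigma_t\right>}$ as $\sum_{\sigma\in\mathcal{C}_c}\mathbf{1}_{\sigma_t = \sigma}$. Since $\mathfrak{S}_p$ is finite, I can interchange the finite sum with the time-average and apply \eqref{eq:MHpp} to each term, obtaining
\[
\frac{1}{T}\sum_{t=1}^T \mathbf{1}_{c=\left<\sigma_t\right>} \;\stackrel{a.s.}{\longrightarrow}\; \sum_{\sigma\in\mathcal{C}_c}\tilde{\pi}_\sigma \;=\; \Phi(c)\cdot\frac{\pi_c}{Z},
\]
using that $\#\mathcal{C}_c = \Phi(c)$ and that $\tilde{\pi}_\sigma = \pi_c/Z$ is constant on $\mathcal{C}_c$. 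Dividing by $\Phi(c)$ gives the limit $\pi_c/Z$ for the numerator of \eqref{eq:alg2}.

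For the denominator, I would similarly write
\[
\frac{1}{T}\sum_{t=1}^T \frac{1}{\Phi(\left<\sigma_t\right>)} \;=\; \sum_{\sigma\in\mathfrak{S}_p} \frac{1}{\Phi(\left<\sigma\right>)}\cdot \frac{1}{T}\sum_{t=1}^T\mathbf{1}_{\sigma_t=\sigma},
\]
and apply \eqref{eq:MHpp} to each (finitely many) term to obtain the almost-sure limit
\[
\sum_{\sigma\in\mathfrak{S}_p}\frac{\tilde{\pi}_\sigma}{\Phi(\left<\sigma\right>)} \;=\; \sum_{c\in\mathcal{C}}\frac{1}{\Phi(c)}\sum_{\sigma\in\mathcal{C}_c}\frac{\pi_c}{Z} \;=\; \sum_{c\in\mathcal{C}}\frac{\pi_c}{Z} \;=\; \frac{1}{Z},
\]
where the last equality uses that $(\pi_c)_{c\in\mathcal{C}}$ is a probability distribution. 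Taking the ratio of the two limits yields $\pi_c$, as claimed.

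There is no serious obstacle: everything is a finite sum, the chain is already known to be ergodic with stationary distribution $\tilde{\pi}$ by the construction of Algorithm \ref{alg2}, and the main point is simply the bookkeeping that ensures $1/Z$ cancels between numerator and denominator. The only minor subtlety is that \eqref{eq:MHpp} provides almost-sure convergence for each fixed $\sigma$; since $\mathfrak{S}_p$ is finite, the exceptional null sets may be combined into a single one, so the vector-valued convergence holds simultaneously in $\sigma$, which legitimizes the term-by-term passage to the limit in both the numerator and the denominator.
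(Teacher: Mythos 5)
Your proof is correct and follows essentially the same route as the paper's: decompose the indicator $\mathbf{1}_{c=\left<\sigma_t\right>}$ over the generators in $\mathcal{C}_c$, apply the ergodic limit \eqref{eq:MHpp} term by term over the finite set $\mathfrak{S}_p$, and cancel the normalizing constant between numerator and denominator. Your version is marginally more explicit in identifying the denominator's limit as $1/Z$ via $\sum_c \pi_c = 1$ (the paper stops at proportionality), but this is the same argument.
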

\begin{proof}
	Let us denote $n_\sigma^{(T)} = \sum_{t=1}^T {\bf 1}_{\sigma=\sigma_t }$, $\sigma\in\mathfrak{S}_p$. We have $T=\sum_{\sigma\in\mathfrak{S}_p} n_\sigma^{(T)}$ and $n_\sigma^{(T)}/T\stackrel{a.s.}{\longrightarrow}\tilde{\pi}_\sigma$.
	Moreover,
	\begin{align*}
	\frac{\frac{1}{\Phi(c) }\sum_{t=1}^T {\bf 1}_{c=\left<\sigma_t\right> } }{\sum_{t=1}^T  \frac{1}{\Phi(\left<\sigma_t\right>)}} &=  
	\frac{\frac{1}{\Phi(c) }\sum_{\sigma\in\mathcal{C}_c} n_\sigma^{(T)}}{\sum_{t=1}^T  \sum_{\gamma\in\mathcal{C}}
		\frac{1}{\Phi(\gamma)}{\bf 1}_{\gamma=\left<\sigma_t\right>} } \\
	& = 
	\frac{\frac{1}{\Phi(c) }\sum_{\sigma\in\mathcal{C}_c} \frac{n_\sigma^{(T)}}{T}}{\sum_{\gamma\in\mathcal{C}} \frac{1}{\Phi(\gamma)}  \sum_{\sigma\in\mathcal{C}_\gamma} \frac{n_\gamma^{(T)}}{T}} \\
	& \stackrel{a.s.}{\longrightarrow} 
	\frac{\frac{1}{\Phi(c) }\sum_{\sigma\in\mathcal{C}_c} \tilde{\pi}_\sigma }{\sum_{\gamma\in\mathcal{C}} \frac{1}{\Phi(\gamma)}  \sum_{\sigma\in\mathcal{C}_\gamma} \tilde{\pi}_\gamma}.
	\end{align*}
	Finally, by \eqref{eq:pitpi} we have 
	\[
	\frac{1}{\Phi(c) }\sum_{\sigma\in\mathcal{C}_c} \tilde{\pi}_\sigma  = \frac{ \pi_{c}}{\sum_{\gamma\in \mathcal{C}}\Phi(\gamma) \pi_{\gamma}}\propto \pi_c,
	\]	
	which completes the proof.
\end{proof}
In order to approximate the posterior probability $\pi$, we allowed the Markov chain to travel on the larger space $\mathfrak{S}_p$. In particular, each state $c\in\mathcal{C}$ was multiplied $\Phi(c)\geq 1$ times, where $\Phi(c)$ is the number of permutations generating $c$. This procedure should result in slower convergence to the stationary distribution in \eqref{eq:alg2}.
By comparing with \eqref{eq:MHpp}, we see that \eqref{eq:alg2} can be interpreted as follows: let us assign to each cyclic subgroup $c$ a weight $1/\Phi(c)\leq 1$. Then, the denominator $N_T:=\sum_{t=1}^T 1/\Phi(\left<\sigma_t\right>)$ can be thought of as an ``effective'' number of steps and the numerator is the number of  ``effective'' steps spent in state $c$. In general, for large $T$ we expect $N_T\ll T$ (see an example in Section \ref{sec:sim}).

\subsection{Model selection for $p=4$}
\label{selection:general}
Our numbering of colored models on four vertices is in accordance with \cite[Fig. 15 and 16, p. 674--675]{Ge11}. However, we identify models by the largest group with the same coloring $\Gamma^\ast$ rather than the smallest as in \cite{Ge11}. There are $30$ different subgroups of $\mathfrak{S}_4$, which generate $22$ different colored spaces.
Up to conjugacy (renumbering of vertices), there are $8$ different conjugacy classes. Within a conjugacy class, constants $(k_i,r_i,d_i)_{i=1}^L$ remain the same. Groups $\Gamma^\ast_k$ for $k=1,\ldots,17$ correspond to cyclic colorings.

\begin{table}
	\caption{Structure constants for all colorings with four vertices}
	\label{tab:tab1}
		\begin{tabular}{@{}lrrr@{}}
			\hline
			Group & $(k_i)$ & $(r_i)$ & $(d_i)$ \\ 
			\hline
			$\Gamma_1^\ast\,\,=\{\mathrm{id}\}$ & (1) & (4) & (1) \\ 
			\hline
			$\Gamma_2^\ast\,\,=\left\langle {(1,2)} \right\rangle$ &   (1,1) & (3,1) & (1,1)   \\ 
			$\Gamma_3^\ast\,\,=\left\langle {(1,3)} \right\rangle$ &  &  &  \\ 
			$\Gamma_4^\ast\,\,=\left\langle {(1,4)} \right\rangle$ & & & \\ 
			$\Gamma_5^\ast\,\,=\left\langle {(2,3)} \right\rangle$ &  &  & \\ 
			$\Gamma_6^\ast\,\,=\left\langle {(2,4)} \right\rangle$ &  &  &  \\ 
			$\Gamma_7^\ast\,\,=\left\langle {(3,4)} \right\rangle$ &  &  &  \\ 
			\hline
			$\Gamma_8^\ast\,\,=\left\langle {(1,2,3),\,\, (1,2)} \right\rangle$ &  (1,2) & (2,1) & (1,1)   \\ 
			$\Gamma_9^\ast\,\,=\left\langle {(1,2,4),\,\, (1,2)} \right\rangle$ &  & &  \\ 
			$\Gamma_{10}^\ast=\left\langle {(1,3,4),\,\, (1,3)} \right\rangle$ &  &  &  \\ 
			$\Gamma_{11}^\ast=\left\langle {(2,3,4),\,\, (2,3)} \right\rangle$ &  &  &  \\ 
			\hline
			$\Gamma_{12}^\ast=\left\langle {(1,2)(3,4)} \right\rangle$ & (1,1) & (2,2) & (1,1) \\ 
			$\Gamma_{13}^\ast=\left\langle {(1,3)(2,4)} \right\rangle$ &  &  &  \\ 
			$\Gamma_{14}^\ast=\left\langle {(1,4)(2,3)} \right\rangle$ &  &  &  \\ 
			\hline
			$\Gamma_{15}^\ast=\left\langle {(1,2,3,4),\,\,  (1,3)} \right\rangle$ & (1,1,2) & (1,1,1) & (1,1,1) \\ 
			$\Gamma_{16}^\ast=\left\langle {(1,2,4,3),\,\, (1,4)} \right\rangle$ &  &  &  \\ 
			$\Gamma_{17}^\ast=\left\langle {(1,3,2,4),\,\, (1,2)} \right\rangle$ &  &  &  \\ 
			\hline
			$\Gamma_{18}^\ast=\left\langle {(1,2),\,\, (3,4)} \right\rangle$ & (1,1,1) & (2,1,1) & (1,1,1) \\ 
			$\Gamma_{19}^\ast=\left\langle {(1,3),\,\, (2,4)} \right\rangle$ &  &  &  \\ 
			$\Gamma_{20}^\ast=\left\langle {(1,4),\,\, (2,3)} \right\rangle$ &  &  &  \\ 
			\hline
			$\Gamma_{21}^\ast=\left\langle {(1,2)(3,4),\,\,(1,4)(2,3)} \right\rangle$ & (1,1,1,1) & (1,1,1,1) & (1,1,1,1)  \\ 
			\hline
			$\Gamma_{22}^\ast=\mathfrak{S}_4$ & (1,3) & (1,1) & (1,1) \\
			\hline
	\end{tabular}
\end{table}

We apply our results and methods in order to do Bayesian model selection for the celebrated example of Frets' heads, \cite{Frets, Whitt90}. The head dimensions (length $L_i$ and breadth $B_i$, $i=1,2$) of 25 pairs of first and second sons were measured.
Thus we have $n=25$ and $p=4$. The following sample covariance matrix is obtained (we have $Z=(L_1,B_1,L_2,B_2)^\top$),
\[
U=\sum_{i=1}^n Z^{(i)}\cdot \transp{\mbox{$Z^{(i)}$}}= \begin{pmatrix}
2287.04 & 1268.84 & 1671.88 & 1106.68 \\
1268.84 & 1304.64 & 1231.48 & \,\,\,841.28 \\
1671.88 & 1231.48 & 2419.36 & 1356.96 \\
1106.68 & \,\,\,841.28 & 1356.96 & 1080.56
\end{pmatrix}
.
\]
We perform Bayesian model selection within all RCOP models, not just the ones corresponding to cyclic subgroups. In Table \ref{tab:tab1} we list all RCOP models on full graph with four vertices, along with corresponding structure constants. Structure constants remain the same within a conjugacy class, however the invariant measure $\varphi_{\Gamma}$ is always different. Since there are only $22$ such models, we calculate all exact posterior probabilities.
The Table \ref{tab:tab1} and the  invariant measures $\varphi_{\Gamma}$ were obtained by using Lemma \ref{rem:Useful}. 

In Table \ref{tab:tab3} we summarize the results when $\delta=3$ (a parameter of the prior distribution, Section \ref{sec:DY}), giving the three best coloring models with the highest posterior probability, for each given $D$. Results are very similar for $\delta=10$ and the given values of $D$. For comparison, the three best models according to BIC are $\Gamma_{19}^\ast$, $\Gamma_{13}^\ast$ and $\Gamma_8^\ast$ with the BIC $834.5$, $835.4$ and $835.5$ respectively. 

\begin{table}
	\caption{Posterior probabilities in Frets' heads for three best models, $\delta=3$ and given $D$.}
\label{tab:tab3} 
\begin{tabular}{@{}r|ll|ll|ll@{}}
	\hline
			$D$ & \multicolumn{2}{c}{Best model}	& \multicolumn{2}{c}{2nd best} & \multicolumn{2}{c}{3rd best} \\
			\hline
			$I_4$ &	$\Gamma_{22}^\ast$ &($95.2\%$) & $\Gamma_{16}^\ast$ &($2.5\%$) &	$\Gamma_{17}^\ast$ &($1.3\%$) \\
			$50I_4$ &	$\Gamma_{19}^\ast$ &($33.8\%$) &	$\Gamma_{13}^\ast$ &($29.6\%$)	& $\Gamma_{8}^\ast$ &($13.3\%$)\\
			$100I_4$ &	$\Gamma_{13}^\ast$ &($39.6\%$) &	$\Gamma_{19}^\ast$ &($29.8\%$) &	$\Gamma_{8}^\ast$ &($7.2\%$)	\\
			$1000I_4$ & $\Gamma_{1}^\ast$ &($38.9\%$) &	$\Gamma_{13}^\ast$ &($10.5\%$) & 	$\Gamma_{3}^\ast$ &($10.3\%$)	\\
			\hline
	\end{tabular}
\end{table}

For different values of $D=d I_4$, the only models that have highest posterior probability are the 4 models: $\Gamma_{22}^\ast = \mathfrak{S}_4$,
$\Gamma_{19}^\ast = \langle (1,3),\,\, (2,4) \rangle$,
$\Gamma_{13}^\ast = \langle {(1,3)(2,4)} \rangle$,
$\Gamma_1^\ast = \{\mathrm{id}\}$. 
These four subgroups form a path in the Hasse diagram
of subgroups of $\mathfrak{S}_4^\ast$, i.e.
$\Gamma_{22}^\ast \supset 
\Gamma_{19}^\ast \supset
\Gamma_{13}^\ast \supset	
\Gamma_1^\ast$.
Thus the four selected colorings, corresponding to the permutation groups 
are in some way consistent.
Moreover, each of them has a good statistical interpretation.
Let us interpret  models $\Gamma_{13}^\ast$ and $\Gamma_{19}^\ast$.
Recall the enumeration of vertices $(1,2,3,4) = (L_1, B_1, L_2, B_2)$.
The invariance with respect to the transposition ${(1,3)}$ means that $L_1$ is exchangeable with $L_2$ and, similarly, the invariance with respect to the transposition ${(2,4)}$ implies  exchangeability of $B_1$ and $B_2$. Both together correspond to the fact that sons should be exchangeable in some way.

We observe that only the $\Gamma_{22}^\ast$ model  appeared
in former attempts of model selection for Frets' heads data.
It was considered in \cite[Fig. S7 p.28 of the Supplementary
material]{Hel18b} with eleven other models.
Note that the only complete RCOP model selected in \cite{Ge11} (who used the Edwards-Havr\'anek model selection procedure) among the $9$ minimally accepted models on p. 676 of her article is $\Gamma_{10}^\ast$, which is not selected by our exact Bayesian procedure for any choice of $D=d \, I_4$. %Colored graphical model  selection using the colored $G$-Wishart distribution as the prior on $K=\Sigma^{-1}$ was also done in the supplementary file of  \cite{Hel18b}.

%In the supplementary file of  \cite{MLG15}, the authors  do model selection using Bayes factors and the colored $G$-Wishart distribution as the prior on $K=\Sigma^{-1}$. The selection is done among the models pre-selected  in \cite{Ge11} and in \cite{HL08}. The normalizing constants of the colored $G$-Wishart distribution are computed with the help of the MCMC sampling scheme developed in that paper for the colored $G$-Wishart distribution. However, this way of computing the normalizing constants is not recommended for high dimensions.

\section{Simulations}\label{practical}
Let the covariance matrix $\Sigma=(c_{ij})_{ij}\in\mathrm{Sym}^+(p;\real)$  be the symmetric circulant matrix defined by 
%\[
%\Sigma = \begin{pmatrix}
%c_0 & c_1 & \dots & c_2 & c_1 \\
%c_1 & c_0 & c_1  & \dots & c_2 \\
%\vdots & c_1 & c_0 &  \ddots & \vdots \\
%c_2 & & \ddots & \ddots & c_1 \\
%c_1 & c_2  & \dots & c_1 & c_0
%\end{pmatrix},
%\]
\[
c_{ij}=\begin{cases}
	1-|i-j|/p, & i\neq j, \\
	1+1/p, & i=j.
	\end{cases}
\]
It is easily seen that this matrix belongs to $\Pcone_{\langle\sigma^\ast\rangle}$ with $\sigma^\ast=(1,2,\ldots,p-1,p)$.

\subsection{First approach}

For $p=10$ and $n=20$, we sampled $Z^{(1)},\ldots, Z^{(n)}$ from the $\mathrm{N}_p(0,\Sigma)$ distribution and obtained $U=\sum_{i=1}^n Z^{(i)}\cdot \transp{\mbox{$Z^{(i)}$}}$ depicted in Fig. \ref{fig01} (b).

We run the Metropolis-Hastings algorithm starting from the group $\left<\sigma_0\right>=\{\mathrm{id}\}$ with hyper-parameters $\delta=3$ and $D=I_{10}$. After $1\,000\,000$ steps, the five most visited states are given in the Tab. \ref{tab:tab5}.

\begin{figure}
	\includegraphics[width=0.35\textwidth]{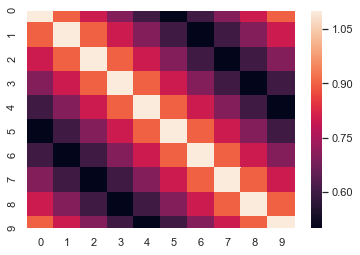}\,\,\,\,\,\,
	\includegraphics[width=0.35\textwidth]{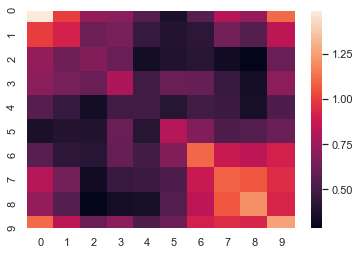}\\
	\makebox{{(a) \qquad\qquad\qquad\qquad\qquad\qquad\qquad\quad(b)}}
	\caption{Heat map of matrix $\Sigma$ (a) and matrix $U/n$ (b).}
	\label{fig01}
\end{figure}

\begin{table}
	\caption{Five most visited cyclic subgroups}
	\label{tab:tab5}
	\begin{tabular}{@{}rl@{}}
		\hline
		generator of a cyclic group & number of visits\\
		\hline
		(1, 2, 3, 4, 5, 6, 7, 8, 9,10) &  457\,725
		\\(1, 6, 2, 7)(3, 5, 9)(4, 8, 10)&  110\,677 
		\\(1, 6)(2, 7)(3, 5, 9)(4, 8, 10)&  51\,618
		\\(1, 7)(2, 6)(3, 5, 9)(4, 8, 10)&  40\,895
		\\(1, 2, 6, 7)(3, 5, 9)(4, 8, 10)&  34\,883\\
		\hline
	\end{tabular}
\end{table}

The Metropolis-Hastings (M-H) algorithm  recovered the true pattern of the covariance matrix. The acceptance rate was $2.5\%$ 
and the Markov chain visited $746$ different cyclic groups. The acceptance rate can be increased by a suitable choice of the hyper-parameters (e.g. for $D=10 I_{10}$ the acceptance rate is around $10\%$).

In order to grasp how randomness may influence results, we performed $100$ simulations, where each time we sample $Z^{(1)},\ldots, Z^{(n)}$ from $\mathrm{N}_p(0,\Sigma)$ and we run M-H for $100\,000$ steps with the same parameters as before. In Table \ref{tab:tabX} we present how many times a given cyclic subgroup was most visited during these $100$ simulations (second column). There were $53$ distinct cyclic subgroups, which were most visited at least in one of the $100$ simulations; below we present $10$ such subgroups. The average acceptance rate is $1.4\%$  (see the histogram in Fig. \ref{fig02}). 
\begin{table}
	\caption{Cyclic subgroups which were chosen by M-H algorithm most often}
	\label{tab:tabX}
\begin{tabular}{@{}lcl@{}}
	\hline
	\mbox{generator of a cyclic group} & \#\mbox{most visited} & \mbox{ARI}\\
	\hline
	(1, 2, 3, 4, 5, 6, 7, 8, 9,10) & 25 & 1.00
	\\(1, 3, 5, 7, 9)(2, 4, 6, 8,10)&  13 & 0.60
	\\(1, 2, 4, 3, 5, 6, 7, 9, 8, 10)&  3 & 0.43
	\\(1, 2, 4, 3, 5, 6, 7, 8, 9, 10) & 2 & 0.46
	\\(1, 3, 2, 4, 5, 6, 8, 7, 9, 10) & 2 & 0.43
	\\(1, 3, 5, 9, 2, 6, 8, 10, 4, 7) & 2 & 0.43
	\\(1, 4, 3, 5, 2, 6, 9, 8, 10, 7) & 2 & 0.35
	\\(1, 4, 5, 7, 8)(2, 3, 6, 9, 10) & 2 & 0.24
	\\(1, 8, 10, 9)(2, 7)(3, 5, 4, 6) & 2 & 0.19
	\\(1, 2, 10, 3)(4, 9)(5, 8, 6, 7) & 2 & 0.19\\
	%\\(1, 3, 6, 8)(2, 5, 7, 10)(4, 9) & 2 & 0.16
	%\\(1, 6)(2, 4, 7, 9)(3, 5, 8, 10) & 2 & 0.16
	%\\(1, 2, 3, 4, 5, 6, 7, 8, 10, 9) & 1 & 0.46\\
	\hline
\end{tabular}
\end{table}
When we regard colorings as partitions of the set $V\cup E$ according to group orbit decomposition, the two colorings may be compared using the so-called adjusted Rand index (ARI, see \cite{Partit}), a similarity measure comparing partitions which takes values between $-1$ and $1$, where $1$ stands for perfect match and independent random labelings have score close to $0$. In the third column of Table \ref{tab:tabX}, we give the adjusted Rand index between the colorings generated by given cyclic subgroup and the true coloring.

We see that groups which were most visited by the Markov chain have positive ARI and the true pattern was recovered in a quarter of cases. We stress that even though the colorings generated by $\left<(1, 2, 3, 4, 5, 6, 7, 8, 9,10)\right>$ and $\left< (1, 3, 5, 7, 9)(2, 4, 6, 8,10)\right>$ are very similar, the distance between these subgroups  is $9$, that is, the Markov chain $(C_t)_t$ needs at least $9$ steps to get from one subgroup to the other. We performed similar simulations for $n=p=10$ and the results were only slightly worse: the true pattern was recovered in $18$ out of $100$ runs of the algorithm.

This indicates that the Markov chain may encounter many local maxima and one should always tune the hyper parameters in order to have higher acceptance rate or to allow the Markov chain $(C_t)_t$ to make bigger steps.

\begin{figure}
	\centering
	\makebox{\includegraphics[width=0.4\textwidth]{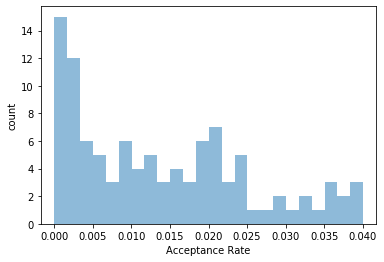}}
	\caption{\label{fig02}Histogram of acceptance rates in $100$ simulations of Metropolis-Hastings algorithm.}
\end{figure}

\subsection{Second approach}\label{sec:sim}
We performed $T=100\,000$ steps of Algorithm \ref{alg2} with $\sigma_0=\mathrm{id}$, $p=100$, $n=200$, $\delta=3$ and $D=I_{100}$. Let us note that for $p=100$, there are about $4\cdot10^{155}$  cyclic subgroups and this is the number of models we consider in our model search. 

We have used Theorem \ref{thm:Alg2} to approximate the posterior probability distribution $(\pi_c;\,c\in\mathcal{C} )$ (see \eqref{eq:defpp}).
The highest estimated posterior probability was obtained for $c^\ast:=\left<\sigma^\ast\right>$, where
\begin{align*}
\sigma^\ast = & 
(1, 2, 3, 4)(6, 8, 15)(7, 10, 9)(11, 16, 12)(13, 17, 14)(18, 19, 20, 22, 21)(23, 26) \\ 
& (24, 42, 28, 44)
(25, 31, 30, 32)(27, 34)(29, 37)(33, 45)(35, 39, 36, 40)\\
& (38, 47, 41, 48)(43, 51, 46, 49)(50, 52, 53, 54)(56, 58, 57)
(59, 66, 67)\\
&(60, 65, 63)(61, 62, 64)(68, 71, 72, 70, 69)(73, 93)(74, 77)(75, 98, 81, 100)\\
& (76, 84, 78, 83)(79, 85)(80, 94, 82, 91)(86, 92, 87, 90)(88, 96, 89, 97)(95, 99).
\end{align*}
The order of $c^\ast$ is $|c^\ast|=60$ and $\Phi(c^\ast)=16$. The estimate of the posterior probability $\pi_{c^\ast}$ is equal to (recall \eqref{eq:alg2})
\[
\frac{\frac{1}{\Phi(c^\ast) }\sum_{t=1}^T {\bf 1}_{c^\ast=\left<\sigma_t\right> } }{\sum_{t=1}^T  \frac{1}{\Phi(\left<\sigma_t\right>)}} \approx \frac{2\,361.5}{6\,381.5} \approx 37\%.
\]

The true covariance matrix $\Sigma$, the data matrix $U/n$ and the projection $\Pi_{c^\ast}(U/n)$ are illustrated in Fig. \ref{fig03}.

\begin{figure}
	\includegraphics[width=0.3\textwidth]{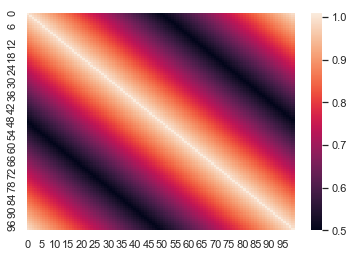}\ \ \,
	\includegraphics[width=0.3\textwidth]{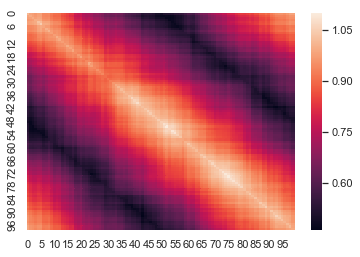}\ \ \,
	\includegraphics[width=0.3\textwidth]{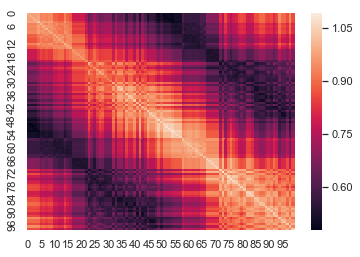}
	\makebox{{(a) \qquad\qquad\qquad\qquad\qquad\quad (b)   \quad\qquad\qquad\qquad\qquad\qquad(c)}}
	\caption{Heat map of matrix $\Sigma$ (a) and matrix $U/n$ (b) and projection of $U/n$ onto $\Zsp_{c^\ast}$.}
	\label{fig03}
\end{figure}
We visualize the performance of the algorithm on Fig \ref{fig04}. In red color, a sequence $\left( \sum_{t=1}^k  \frac{1}{\Phi(\left<\sigma_t\right>)}\right)_k$ is depicted, which can be thought of as  an ``effective'' number of steps of the algorithm (for an explanation, see the paragraph at the end of Subsection \ref{sec:sa}). In blue, we present a sequence $\left( \sum_{t=1}^k  \frac{1}{\Phi(\left<\sigma_t\right>)} {\bf 1}_{\left<\sigma_t\right>\neq\left<\sigma_{t-1}\right>}\right)_k$, which represents the number of weighted accepted steps, where the weight of the $k$th step equals $\frac{1}{\Phi(\left<\sigma_k\right>)}$. We restricted the plot to steps $k=1,\ldots, 10\,000$, because after $10\,000$ steps, the Markov chain
$(\sigma_t)_{10\,000\leq t\leq 100\,000}$ changed its state only $9$ times. For $k=100\,000$, the value of the blue curve is $25.75$, while the value of red one is $6\,381.5$.

\begin{figure}
	\includegraphics[width=0.5\textwidth]{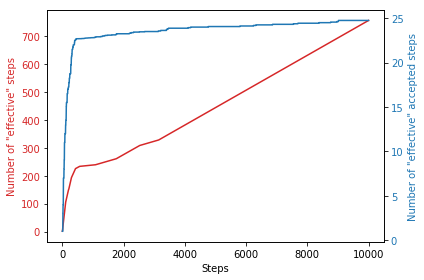}
	\caption{Number of ``effective'' steps (red) and number of ``effective'' accepted steps (blue).}
	\label{fig04} 
\end{figure}

The model suffers from poor acceptance rate, which could be improved by an appropriate choice of the hyper-parameter $D$ or by allowing the Markov chain to do bigger steps. 

%%%%%%%%%%%%%%%%%%%%%%%%%%%%%%%%%%%%%%%%%%%%%%
%% Single dix:                         %%
%%%%%%%%%%%%%%%%%%%%%%%%%%%%%%%%%%%%%%%%%%%%%%
%\begin{appendix}
%
%\end{appendix}
%%%%%%%%%%%%%%%%%%%%%%%%%%%%%%%%%%%%%%%%%%%%%%
%% Support information (funding), if any,   %%
%% should be provided in the                %%
%% Acknowledgements section.                %%
%%%%%%%%%%%%%%%%%%%%%%%%%%%%%%%%%%%%%%%%%%%%%%
\begin{acks}[Acknowledgments]
	The authors would like to thank Steffen Lauritzen for his interest and encouragements.
	We also thank M. Bogdan from Wroc{\l}aw University, A. Descatha from INSERM and Centre Hospitalier  Universitaire Angers and V. Seegers from Institut de Cancerologie de l'Ouest Nantes for explaining the specific nature of medical and genetic data.  The paper benefited from the comments of an anonymous referee to whom the authors are grateful. 
\end{acks}

%They also thank Hideto Nakashima for informing us of the reference \cite{Upmeier},
% and Hyohe Miyachi for discussion about irreducible decomposition of representations of finite groups.

% The authors would like to thank ...
% 
% The first author was supported by ...
% 
% The second author was supported in part by ...

%%%%%%%%%%%%%%%%%%%%%%%%%%%%%%%%%%%%%%%%%%%%%%
%% Supplementary Material, if any, should   %%
%% be provided in {supplement} environment  %%
%% with title inside \textbf{} and short    %%
%% description below.                       %%
%%%%%%%%%%%%%%%%%%%%%%%%%%%%%%%%%%%%%%%%%%%%%%
\begin{supplement}
	
	\stitle{}%Supplement to ``Model selection in the space of Gaussian models invariant by symmetry''.}
	
	\sdescription{}Supplement contains proofs and examples. 
We provide proofs of Theorems \ref{thm:intro}, \ref{thm:cyclic1}, \ref{thm:cyclic2} along with a background on representation theory that is needed to understand proofs. Moreover, we present proofs of Proposition \ref{pro:phi}, Theorems \ref{thm:IntegralPhi} and  \ref{thm:IntegralNoPhi}, an example to Section \ref{sec:25}, proof of Lemma \ref{lem:inv} and the real data example considered in \cite{Miller} and \cite{HL08}.
	
	In this document, references to equations are sometimes to the
	main file and sometimes to this supplementary file. For the reader's convenience we put a subindex $(\ )_{mf}$ to
	equation, section and theorem numbers referring to the main file.
	
	\section{Basics  of representation theory over reals}
	Representation theory has long been known to be very useful in statistics, cf. \cite{Diaconis}. However, the representation theory over $\real$ that we need in our work, is less known to the statisticians than the  standard one over $\complex$  (see Subsection \ref{sec:newHI} for a contrast
	between the theories over $\real$ and $\complex$).  In this section we recall some basic notions and results of the representation theory of groups over the reals. We intend to introduce the reader with all background needed to understand proofs of Theorem 1$_{mf}$ as well as Theorems 5$_{mf}$ and 6$_{mf}$. For  further details, the reader is referred to \cite{JPS77}.

	For a real vector space $V$, we denote by $\mathrm{GL}(V)$ the group of linear automorphisms on $V$.
	Let $G$ be a finite group. 
	\begin{defi}
		A function $\rho\colon G\to \mathrm{GL}(V)$ is called
		a representation of $G$ over $\real$ if it is a homomorphism, that is 
		\[
		\rho(g\, g')=\rho(g)\,\rho(g')\qquad (g,g'\in G).
		\]
		The vector space $V$ is called the representation space of $\rho$. 
	\end{defi}
	If $\dim V = n$, taking a basis $\{v_1, \dots, v_n\}$ of $V$, 
	we can identify $\mathrm{GL}(V)$ with the group $\mathrm{GL}(n; \real)$ of all $n \times n$ non-singular real matrices.
	Then a representation $\rho\colon G \to \mathrm{GL}(V)$ corresponds to a group homomorphism $B\colon G \to \mathrm{GL}(n; \real)$
	for which 
	\begin{equation}
		\label{action}
		\rho(g) v_j = \sum_{i=1}^n B_{ij}(g) v_i. 
	\end{equation}
	We call $B$ the matrix expression of $\rho$ with respect to the basis $\{v_1, \dots, v_n\}$.

	\begin{defi}
		A linear subspace $W\subset V$ is said to be $G$-invariant if 
		\[
		\rho(g)w\in W\qquad (w\in W,\, g\in G).
		\]
		A representation $\rho$ is said to be irreducible if the only $G$-invariant subspaces are non-proper, that is, whole $V$ and $\{0\}$.
		A restriction of $\rho$ to a $G$-invariant subspace $W$ is a subrepresentation.
		Two representations, 
		$\rho\colon G \to \mathrm{GL}(V)$ and $\rho'\colon G \to \mathrm{GL}(V')$ are equivalent
		if there exists an isomorphism of vector spaces $\ell\colon V\mapsto V'$ with
		\[
		\ell(\rho(g)v) = \rho'(g)\ell(v)\qquad (v\in V,\, g\in G).
		\]
	\end{defi}
	We note that a group homomorphism $B\colon G \to \mathrm{GL}(n; \real)$ 
	defines a representation of $G$ on $\real^n$
	naturally.
	We see that
	$B$ is a matrix expression of a representation $(\rho, V)$ 
	if and only if $B$ and $\rho$ are equivalent via the map 
	$\ell\colon \real^n \owns (x_i)_{i=1}^n  \mapsto \sum_{i=1}^n x_i v_i \in V$, that is, $\ell(B(g)\underline{x}) = \rho(g)\ell(\underline{x})$ for $\underline{x}\in\real^n$. Here $\{v_1,\ldots,v_n\}$ denotes a fixed basis of $V$.
	Therefore,  
	two representations $(\rho, V)$ and $(\rho', V')$ are equivalent 
	if and only if 
	they have the same matrix expressions with respect to appropriately chosen bases.
	We shall write $\rho \sim B$ if $\rho$ has a matrix expression $B$ with respect to some basis.
	
	Let $(\rho, V)$ be a representation of $G$, and $B\colon G \to \mathrm{GL}(n; \real)$ be a matrix expression of $\rho$
	with respect to a basis $\{v_1, \dots, v_n\}$ of $V$.
	Then it is known that  the function 
	$\chi_\rho\colon G \owns g \mapsto \mathrm{Tr}\,B(g) = \sum_{i=1}^n B_{ii}(g) \in \real$
	is independent of the choice of the basis $\{v_1, \dots, v_n\}$. 
	The function $\chi_\rho$ is called a character of the representation $\rho$. 
	The function $\chi_\rho$ characterizes the representation $\rho$ in the following sense.
	\begin{lem} \label{lem:character}
		Two representations $(\rho, V)$ and $(\rho', V')$ of a group $G$ are equivalent if and only if $\chi_\rho = \chi_{\rho'}$. 
	\end{lem}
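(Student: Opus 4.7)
The forward implication is immediate from the conjugation invariance of trace: if $\rho\sim B$ and $\rho'\sim B'$ are equivalent via an isomorphism $\ell\colon V\to V'$, then choosing a basis $\{v_1,\dots,v_n\}$ of $V$ together with its image basis $\{\ell(v_1),\dots,\ell(v_n)\}$ of $V'$ makes $B(g)=B'(g)$ for all $g\in G$, so $\chi_\rho=\chi_{\rho'}$. With respect to any other pair of bases the two matrix expressions differ by conjugation, and $\mathrm{Tr}$ is invariant under this.

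For the nontrivial direction, the plan is to reduce to irreducible constituents and then invoke linear independence of irreducible characters. Since $G$ is finite, Maschke's theorem applies over $\real$ (as $|G|$ is invertible and an invariant inner product is obtained by averaging any inner product over $G$), so every $G$-invariant subspace admits a $G$-invariant complement; by induction on dimension, both $(\rho, V)$ and $(\rho', V')$ decompose as direct sums of irreducible subrepresentations. Let $\{(\rho_\alpha, V_\alpha)\}_{\alpha\in A}$ be representatives of the equivalence classes of real irreducible representations of $G$ appearing in either decomposition. Then there exist nonnegative integers $m_\alpha, m'_\alpha$ with $\rho\sim\bigoplus_\alpha m_\alpha\rho_\alpha$ and $\rho'\sim\bigoplus_\alpha m'_\alpha\rho_\alpha$, which at the level of characters gives $\chi_\rho=\sum_\alpha m_\alpha\chi_{\rho_\alpha}$ and $\chi_{\rho'}=\sum_\alpha m'_\alpha\chi_{\rho_\alpha}$. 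The lemma therefore reduces to showing that the family $\{\chi_{\rho_\alpha}\}_{\alpha\in A}$ is $\real$-linearly independent as a set of functions on $G$.

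The latter follows from real Schur orthogonality. For any linear map $T\colon V_\alpha\to V_\beta$, the averaged operator $T^{\sharp}:=\frac{1}{|G|}\sum_{g\in G}\rho_\beta(g^{-1})\,T\,\rho_\alpha(g)$ is $G$-equivariant, so by Schur's lemma (over $\real$) it is zero when $\alpha\neq\beta$ and belongs to the division algebra $D_\alpha=\mathrm{End}_G(V_\alpha)\in\{\real,\complex,\quat\}$ when $\alpha=\beta$. Taking $T$ to range over matrix units in a basis and taking traces yields
\[
\frac{1}{|G|}\sum_{g\in G}\chi_{\rho_\alpha}(g)\,\chi_{\rho_\beta}(g)=\bigl(\dim_\real D_\alpha\bigr)\,\delta_{\alpha\beta},
\]
so the $\chi_{\rho_\alpha}$ are pairwise orthogonal, hence linearly independent, with respect to the inner product $\langle\phi,\psi\rangle=\frac{1}{|G|}\sum_g \phi(g)\psi(g)$. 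From $\chi_\rho=\chi_{\rho'}$ we then conclude $m_\alpha=m'_\alpha$ for every $\alpha$, and therefore $\rho$ and $\rho'$ are equivalent.

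The main obstacle, and the one point where the argument genuinely departs from the standard complex treatment, is the appearance of the division algebras $D_\alpha\in\{\real,\complex,\quat\}$ in the normalization of Schur orthogonality; over $\complex$ the constant is simply $1$, whereas over $\real$ it is $\dim_\real D_\alpha\in\{1,2,4\}$. This extra factor does not obstruct linear independence and is in fact the same trichotomy governing the decomposition \eqref{genform} that motivates the paper. An alternative route would be to complexify both representations, apply the classical complex version of the lemma to the extensions of scalars (both of which have the same character as their real originals), and then descend using the general fact that isomorphism of modules over a finite group is preserved under faithfully flat base change in characteristic zero; but the direct real argument above is self-contained and fits the framework already in use.
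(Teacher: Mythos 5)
Your proof is correct, but note that the paper does not actually prove this lemma: it is quoted as a known fact from the representation theory of finite groups over $\real$ (the reader is referred to \cite{JPS77}), so there is no in-paper argument to compare against. What you supply is the standard textbook route: the forward direction from conjugation-invariance of the trace, and the converse via Maschke's theorem, decomposition into irreducibles, and linear independence of irreducible characters obtained from real Schur orthogonality. You correctly isolate the one genuine difference from the complex case, namely the normalization $\langle\chi_{\rho_\alpha},\chi_{\rho_\alpha}\rangle=\dim_\real D_\alpha\in\{1,2,4\}$ rather than $1$; since this constant is positive it does not affect linear independence, and the multiplicities $m_\alpha=m'_\alpha$ follow by pairing $\chi_\rho$ against each $\chi_{\rho_\alpha}$. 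One small looseness: for $\alpha\neq\beta$ the averaged operator $T^{\sharp}$ is a map $V_\alpha\to V_\beta$, not an endomorphism, so ``taking traces'' does not literally apply; the clean formulation is $\frac{1}{|G|}\sum_{g}\chi_{\rho_\alpha}(g)\chi_{\rho_\beta}(g^{-1})=\dim_\real\mathrm{Hom}_G(V_\alpha,V_\beta)$, which is $0$ for $\alpha\neq\beta$ by Schur's lemma and $\dim_\real D_\alpha$ for $\alpha=\beta$ (and $\chi(g^{-1})=\chi(g)$ for orthogonal representations). Your alternative via complexification and descent (Noether--Deuring) is also valid. Either way the argument is complete and consistent with the framework the paper uses.
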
 
	We apply this lemma  in practice to know whether two given representations are equivalent or not.
	
	It is known that, for a finite group $G$, 
	the set $\Lambda(G)$ of equivalence classes of irreducible representations of $G$ is a finite set.
	We fix the group homomorphisms 
	$B_{\alpha}\colon G \to \mathrm{GL}(k_{\alpha}; \real)$, $\alpha \in A,$
	indexed by a finite set $A$
	so that
	$\Lambda(G) = \set{[B_\alpha]}{\alpha \in A}$, 
	where $[B_\alpha]$ denotes the equivalence class of $B_\alpha$.
	
	Let $(\rho, V)$ be a representation of $G$.
	Then
	there exists a $G$-invariant inner product on $V$.
	In fact, from any inner product $\cpling{\cdot}{\cdot}_0$ on $V$,
	one can define such an invariant inner product $\cpling{\cdot}{\cdot}$ by
	$\cpling{v}{v'} := \sum_{g \in G} \cpling{ \rho(g)v}{\rho(g)v'}_0$ for  $v, v' \in V$.  
	In what follows, we fix a $G$-invariant inner product on $V$. 
	
	If $W$ is a $G$-invariant subspace, 
	the orthogonal complement $W^\bot$ 
	is also a $G$-invariant subspace. 
	Thus, any representation $\rho$ can be decomposed into a finite number of irreducible subrepresentations
	\begin{align}\label{eq:IrrepDecomposition}
		\rho=\rho_1\oplus \ldots\oplus \rho_K
	\end{align}
	along the orthogonal decomposition $V = V_1 \oplus \dots \oplus V_K$,
	where $\rho_i$ is the restriction of $\rho$ to the $G$-invariant subspace $V_i$, $i=1,\ldots,K$. 
	Let $r_\alpha$ be the number of subrepresentations $\rho_i$  such that $\rho_i\sim B_\alpha$. 
	Although the irreducible decomposition \eqref{eq:IrrepDecomposition} of $V$ is not unique in general, 
	$r_\alpha$ is uniquely determined.
	We have 
	\begin{align} \label{eqn:Balpha_decomp}
		\rho &\sim \bigoplus_{r_\alpha >0} B_\alpha^{\oplus r_\alpha}, 
	\end{align}
	where $\sum_{r_{\alpha>0}}r_{\alpha}=K.$  
	To see this,
	let $V(B_\alpha)$ be the direct sum of subspaces $V_i$ for which $\rho_i \sim B_\alpha$.
	The space $V(B_\alpha)$ is called the $B_\alpha$-component of $V$.
	If $r_\alpha>0$,
	gathering an appropriate  basis of each $V_i$, 
	the matrix expression of 
	the subrepresentation of $\rho$ on $V(B_\alpha)$ becomes (recall that $B_\alpha(g)\in 
	\mathrm{GL} (k_\alpha;\real)$)
	\[
	B_\alpha(g)^{\oplus r_\alpha} =
	\begin{pmatrix} B_\alpha(g) & & & \\ & B_\alpha(g) & & \\ & & \ddots & \\ & & & B_\alpha(g) \end{pmatrix}
	= I_{r_\alpha} \otimes B_\alpha(g) 
	\in \mathrm{GL}(r_\alpha k_\alpha; \real)
	\qquad (g \in G).
	\]
	Moreover,
	$V$ is decomposed as $V = \bigoplus_{r_\alpha >0} V(B_\alpha)$.
	Therefore, 
	taking a basis of $V$ by gathering the bases of $V(B_\alpha)$, 
	we obtain \eqref{eqn:Balpha_decomp}.
	
	\section{The proof of Theorem 1$_{mf}$}
	In this section we apply general results on representation theory from previous section to the mapping $\sigma \mapsto R(\sigma)$ defined in $(3)_{mf}$. 
	
	Let $\Gamma$ be a subgroup of the symmetric group $\mathfrak{S}_p$. By definition, we have $R\colon \Gamma\to \mathrm{GL}(p;\real)$ and $R(\sigma\circ\sigma')=R(\sigma)\cdot R(\sigma')$ for all $\sigma, \sigma'\in\Gamma$. Thus, $R$ is a representation of $\Gamma$ over $\real$.

	We will show, in this section, that 
	for $R$, as for all representations of a finite group, 
	through an appropriate change of basis, matrices $R(\sigma)$, $\sigma \in \Gamma$, can be simultaneously  written as  block diagonal matrices with the number and dimensions of these block matrices being the same for all $\sigma\in \Gamma$. This, in turn, will imply that any matrix in $\Zsp_{\Gamma}$ can be written under the form described by Theorem 1$_{mf}$. For reader’s convenience we repeat its  statement below.
	\begin{thm}\label{thm:intro}
		Fix a permutation subgroup $\Gamma\subset\mathfrak{S}_p$. Then, there exist constants $L\in\mathbb{N}$, $(k_i, d_i, r_i)_{i=1}^L$ and orthogonal matrix $U_\Gamma$ such that if $X\in\mathcal{Z}_\Gamma$, i.e. $X\in\mathrm{Sym}(p;\real)$ and
		\[
		X_{ij}=X_{\sigma(i)\sigma(j)}\qquad(\sigma\in\Gamma,\, i,j\in\{1,\ldots,p\}),
		\]
		then
		\begin{equation}
			\label{genform}
			X
			= U_\Gamma\cdot
			\begin{pmatrix} 
				M_{\mathbb{K}_1}(x_1) \otimes I_{k_1/d_1} & & & \\
				& M_{\mathbb{K}_2}(x_2) \otimes I_{k_2/d_2} & & \\
				& & \ddots & \\
				& & & M_{\mathbb{K}_L}(x_L) \otimes I_{k_L/d_L} 
			\end{pmatrix}\cdot \transp{U_\Gamma},
		\end{equation}
		where $M_{\mathbb{K}_i}(x_i)$ is a real matrix representation of 
		an $r_i \times r_i$ Hermitian matrix $x_i$ with entries in  $\mathbb{K}_i=\real, \complex$ or $\quat$, $i=1,\ldots, L$,
		and $A \otimes B$ denotes the Kronecker product of matrices $A$ and $B$. 
	\end{thm}

	\subsection{Irreducible decomposition of representation $R$}
	
	Regarding $\rho(\sigma)=R(\sigma) \in \mathrm{GL}(\real^p)$ as an operator on $V = \real^p$ via the standard basis $v_i=e_i \in \real^p$, $i=1,\ldots, p$, we see that  \eqref{action} holds trivially with $B=R$.
	
	We will apply \eqref{eqn:Balpha_decomp} to $G=\Gamma \subset \mathfrak{S}_p$ and $(\rho, V)=(R, \real^p)$. If we let $\set{\alpha \in A}{r_\alpha >0} =: \{\alpha_1,\alpha_2, \dots, \alpha_L\}$ 
	and if we denote by $U_{\Gamma}$ 
	an orthogonal matrix whose column vectors form  orthonormal bases
	of $V(B_{\alpha_1}),\ldots,V(B_{\alpha_L})$ successively, 
	then for $\sigma\in \Gamma$, we have
	\begin{equation} \label{eqn:RU_diagonal}
		\transp{U_\Gamma} \cdot  R(\sigma) \cdot U_\Gamma
		= \begin{pmatrix}  I_{r_1} \otimes B_{\alpha_1}(\sigma) & & & \\ & I_{r_2} \otimes B_{\alpha_2}(\sigma) & & \\ & & \ddots & \\
			& & & I_{r_L} \otimes B_{\alpha_L}(\sigma) \end{pmatrix}.
	\end{equation}
	Note that, since the left hand side of \eqref{eqn:RU_diagonal} is an orthogonal matrix, matrices $B_{\alpha}(\sigma)$, $\alpha\in A$, are orthogonal.
	In the general case, $B_{\alpha}(g)$ are orthogonal if we work with a $G$-invariant inner product. Note that the usual inner product on $V = \real^p$  
	is clearly $\Gamma$-invariant. 
	
	Example below gives an illustration of the representation $R$ and also an illustration of all the notions and results we already stated.
	\begin{exa} \label{ex3}
		Let $p=4$
		and let $\Gamma=\{ \mathrm{id}, (1,2)(3,4)\}$ 
		be the subgroup of $\mathfrak{S}_4$ generated by $\sigma= (1,2)(3,4)$.
		The matrix representation of $\sigma$ in the standard basis $(e_i)_i$ of $\real^4$ is
		\[
		R(\sigma)=
		\begin{pmatrix} 0& 1& 0& 0 \\ 1 &0 &0 &0 \\ 0& 0& 0 &1\\ 0 &0& 1 &0
		\end{pmatrix},
		\]
		which has  the two eigenvalues $1$ and $-1$ with multiplicity 2 for each.
		We choose  the following orthonormal eigenvectors of $R(\sigma)$:
		\[
		u_1=\frac1{\sqrt{2}}(e_1+e_2),
		u_2=\frac1{\sqrt{2}}(e_3+e_4),
		u_3= \frac1{\sqrt{2}}(e_1-e_2), 
		u_4= \frac1{\sqrt{2}}(e_3-e_4)
		\]
		and let $U_\Gamma=(u_1,u_2,u_3,u_4)$.
		The corresponding eigenspaces $V_i=\real u_i$ 
		are invariant under $R(\sigma)$ and  $R(\mathrm{id})=I_4$.
		As $V_i$, $i=1,\ldots,4$, are 1-dimensional, the subrepresentations
		defined by 
		\[
		\rho_i(\gamma)=R(\gamma)|_{V_i}\qquad (\gamma\in \Gamma)
		\]
		are irreducible.
		We have the decomposition \eqref{eq:IrrepDecomposition} of $R$:
		\[
		R =\rho_1\oplus \rho_2 \oplus \rho_3\oplus \rho_4.
		\]
		The  matrix expressions of $\rho_1$ and
		$\rho_2$
		are equal to $B_1(\gamma)=(1)$
		for all $\gamma\in \Gamma$, since $\rho_i(\gamma)v=v$ for $v\in V_i$,
		$i=1,2$.
		We have $r_1=2$. 
		
		The matrix expressions of  $\rho_3$
		and  $\rho_4$ are both  
		equal to $B_2(\gamma)=\mathrm{sign}(\gamma)$
		for all $\gamma\in \Gamma$, since $\rho_i(\mathrm{id})v=v$ 
		and  $\rho_i(\sigma)v=-v$ for $v\in V_i$ for $i=3,4$.
		We have $r_2=2$. 
		
		The representations $\rho_1$ and $\rho_3$ are not equivalent, which can be seen by looking at the characters:  $\chi_{\rho_1}=1$,
		$\chi_{\rho_3}(\gamma)=\mathrm{sign}(\gamma)$, which are not equal.

		In the basis $u_1, u_2, u_3,u_4$, the matrix of $R(\gamma)$ is (compare with \eqref{eqn:RU_diagonal})
		\[
		\begin{pmatrix}
			1 &0&0&0\\0 &1&0&0\\
			0&0&  \mathrm{sign}(\gamma)&0\\
			0&0&0&  \mathrm{sign}(\gamma)
		\end{pmatrix} 
		= B_1(\gamma)^{\oplus 2}  \oplus B_2(\gamma)^{\oplus 2} =
		\transp{U_\Gamma}\cdot R(\gamma)\cdot U_\Gamma.
		\]
		This is the decomposition \eqref{eqn:Balpha_decomp} of $R$
		in the basis $(u_1, u_2, u_3,u_4)$.
	\end{exa}

	\subsection{Block diagonal decomposition of $\Zsp_{\Gamma}$}
	So far, we have shown that through an appropriate change of basis,
	the representation  
	$(R,\real^p)$ of $\Gamma$
	can be expressed as the direct sum  \eqref{eqn:Balpha_decomp} of irreducible subrepresentations. We now want to turn our attention to the elements of $\Zsp_{\Gamma}.$
	
	A linear operator $T\colon V \to V$ is said to be an intertwining operator of the representation $(\rho, V)$
	if $T \circ \rho(g) = \rho(g) \circ T$ holds for all $g \in G$.
	In our context,
	since $(4)_{mf}$ can be rewritten as
	\begin{align}\label{eq:newZ}
		\Zsp_{\Gamma}
		= \set{x \in \mathrm{Sym}(p;\real)}
		{x \cdot R(\sigma) = R(\sigma)\cdot  x \mbox{ for all } \sigma \in \Gamma},
	\end{align}
	$\Zsp_\Gamma$ is the set of symmetric intertwining operators of the representation 
	$(R, \real^p)$. 
	
	Let $\mathrm{End}_{\Gamma}(\real^p)$ denote the set of all intertwining operators of the representation $(R, \real^p)$ of $\Gamma$. Recall that the set $A$ enumerates the elements of $\Lambda(\Gamma)$, the finite set of all equivalence classes of irreducible representations of $\Gamma$.
	From \eqref{eqn:Balpha_decomp} and \eqref{eqn:RU_diagonal}, it is clear that
	to study $\mathrm{End}_{\Gamma}(\real^p)$, it is sufficient to study  the sets,  
	\[
	\mathrm{End}_\Gamma(V_\alpha) = 
	\set{T \in \mathrm{Mat}(k_\alpha, k_\alpha;\real)}{T \cdot B_\alpha(\sigma) = B_{\alpha}(\sigma) \cdot T \mbox{ for all }\sigma \in \Gamma},
	\]
	$\alpha \in A$,
	of all intertwining operators of the irreducible representation $B_{\alpha}$,
	where $V_\alpha := \real^{k_\alpha}$ is the representation space of $B_\alpha$
	equipped with a $\Gamma$-invariant inner product. 
	Indeed, we have $V(B_\alpha)=I_{r_\alpha}\otimes V_\alpha$.
	
	%In what follows, we will consider only the  case $G=\Gamma$ and $\rho=R$.

	The actual formula for $B_{\alpha}(\sigma)$ obviously depends on the choice of $U_\Gamma$ and hence, on the choice of orthonormal basis of $\real^p$. To ensure simplicity of formulation of our next result (Lemma \ref{lem:BalphaProperty}), we will work with special orthonormal bases
	of $V(B_{\alpha_1}),\ldots,V(B_{\alpha_L})$, which together constitute a basis of $\real^p$. Such bases always exist and will be defined in the next section.
	Usage of these bases is not indispensable for the proof of Theorem \ref{thm:intro}, but simplifies it greatly.
	
	The result from \cite[Page 108]{JPS77} implies that, since the representation $B_\alpha$ is irreducible,
	the space $\mathrm{End}_{\Gamma}(V_\alpha)$ is 
	isomorphic either to $\real$, $\complex$, or the quaternion algebra $\quat$.
	Let
	\[
	f_{\alpha}\colon \mathbb{K}_\alpha \to \mathrm{End}_{\Gamma}(V_\alpha),
	\]
	denote this isomorphism, 
	where $\mathbb{K}_\alpha$ is $\real$, $\complex$, or $\quat$. Let 
	\[
	d_\alpha := \dim_{\real} \mathrm{End}_{\Gamma }(V_\alpha) =  \dim_{\real}\, \mathbb{K}_\alpha\in\{1,2,4\}.
	\]
	The representation space $V_\alpha$
	becomes a vector space over $\mathbb{K}_\alpha$ of dimension $k_{\alpha} /d_{\alpha}$
	via 
	\[
	q \cdot v := f_{\alpha}(q) v \qquad (q \in \mathbb{K}_\alpha,\,v \in V_\alpha).
	\]
	Clearly the space $\real I_{k_\alpha}$ of scalar matrices is contained 
	in $\mathrm{End}_\Gamma(V_\alpha)$. 
	If  $d_\alpha = 1 = \dim_{\real} \real I_{k_\alpha}$, 
	we have $\mathrm{End}_\Gamma(V_\alpha) = \real I_{k_\alpha}$. 
	Further, if $d_\alpha=2$, 
	take a $\complex$-basis $\{v_1, \dots, v_{k_\alpha/2}\}$ of $V_\alpha$
	in such a way that
	$\{v_1, \dots, v_{k_\alpha/2},  i  \cdot v_1, \dots, i \cdot v_{k_\alpha/2}\}$ is 
	an orthonormal $\real$-basis of $V_\alpha$. We identify $\real^{k_\alpha}$ and $V_\alpha$ via this $\real$-basis. Then, 
	the action of $q=a + b i \in \complex$ on
	$w 
	\in \real^{k_\alpha}\simeq V_\alpha$
	is expressed as
	\[
	q\cdot w= \begin{pmatrix} a I_{k_\alpha/2} & -b I_{k_\alpha/2} \\ b I_{k_\alpha/2} & a I_{k_\alpha/2} \end{pmatrix} w
	= \left\{M_{\complex}(a+ bi) \otimes I_{k_\alpha/2}\right\}w.
	\]
	Thus, if $d_\alpha=2$, then
	\[
	\mathrm{End}_{\Gamma}(V_\alpha) = \set{M_{\complex}(q) \otimes I_{k_\alpha/2}}{q\in\complex}=	M_{\complex}(\complex) \otimes I_{k_\alpha / 2}.
	\]
	Similarly,
	when $\mathbb{K}_\alpha = \quat$, 
	take an $\quat$-basis $\{v_1, \dots, v_{k_\alpha/4}\}$ of $V_\alpha$
	so that 
	\[
	\{v_1, \dots, v_{k_\alpha/4}, i \cdot v_1, \dots, i \cdot v_{k_\alpha/4},  j \cdot v_1, \dots, j \cdot v_{k_\alpha/4}, 
	k \cdot v_1, \dots, k \cdot v_{k_\alpha/4}\}
	\]
	is an orthonormal $\real$-basis of $V_\alpha$.
	The action of $Q \in \quat$ on $V_\alpha$ is expressed as $M_\quat(Q) \otimes I_{k_\alpha/4}$
	with respect to this basis.
	
	In this way we have proved the following result.
	
	\begin{lem} \label{lem:BalphaProperty}
		For each $\alpha \in A$,  one has
		\begin{align} \label{eqn:Kalpha}
			\mathrm{End}_\Gamma(V_\alpha) = 	M_{\mathbb{K}_{\alpha}}(\mathbb{K}_{\alpha}) \otimes I_{k_\alpha / d_\alpha}.
		\end{align}
	\end{lem}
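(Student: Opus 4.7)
The plan is to apply the real version of Schur's lemma to pin down $\mathrm{End}_\Gamma(V_\alpha)$ up to isomorphism as one of the three real division algebras $\real$, $\complex$, $\quat$, and then translate this abstract algebra isomorphism into a concrete matrix identity by choosing an orthonormal $\real$-basis of $V_\alpha$ that is compatible with the $\mathbb{K}_\alpha$-module structure. Since the displayed formula \eqref{eqn:Kalpha} is a case-by-case statement in $d_\alpha\in\{1,2,4\}$, the argument naturally splits into three parallel cases once the basis has been chosen.

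First, I would invoke \cite[Page 108]{JPS77} to assert that, since $B_\alpha$ is irreducible, $\mathrm{End}_\Gamma(V_\alpha)$ is a finite-dimensional real associative division algebra, hence isomorphic to exactly one of $\real$, $\complex$, $\quat$. Denote this algebra $\mathbb{K}_\alpha$, fix an $\real$-algebra isomorphism $f_\alpha\colon \mathbb{K}_\alpha\to\mathrm{End}_\Gamma(V_\alpha)$, and set $d_\alpha:=\dim_\real\mathbb{K}_\alpha$. Turn $V_\alpha$ into a left $\mathbb{K}_\alpha$-module by $q\cdot v:=f_\alpha(q)v$; counting real dimensions shows $\dim_{\mathbb{K}_\alpha}V_\alpha=k_\alpha/d_\alpha$.

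Next, I would construct the basis. When $d_\alpha=1$, the identity $\mathrm{End}_\Gamma(V_\alpha)=\real I_{k_\alpha}=M_\real(\real)\otimes I_{k_\alpha}$ is immediate. When $d_\alpha=2$, pick a $\complex$-basis $v_1,\dots,v_{k_\alpha/2}$ of $V_\alpha$ such that $v_1,\dots,v_{k_\alpha/2},i\cdot v_1,\dots,i\cdot v_{k_\alpha/2}$ is an orthonormal $\real$-basis; the matrix of $q=a+bi$ acting by $f_\alpha(q)$ then splits into four $(k_\alpha/2)\times(k_\alpha/2)$ scalar blocks, giving exactly $M_\complex(a+bi)\otimes I_{k_\alpha/2}$. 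The case $d_\alpha=4$ is identical, using a $\quat$-basis whose $\real$-span together with the images under $i,j,k$ forms an orthonormal $\real$-basis. Since every intertwining operator is $f_\alpha(q)$ for some $q\in\mathbb{K}_\alpha$, this establishes \eqref{eqn:Kalpha} in each case.

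The one point that requires care, and which I would flag as the main obstacle, is producing such an orthonormal $\real$-basis that is adapted to the $\mathbb{K}_\alpha$-action. This requires that left multiplication by the imaginary units $i$ (and $j,k$ in the quaternionic case) be $\real$-isometric with respect to the $\Gamma$-invariant inner product on $V_\alpha$, so that an $\mathbb{K}_\alpha$-orthonormal family automatically yields a $\real$-orthonormal family after adjoining its images under the imaginary units. This is arranged by normalizing the isomorphism $f_\alpha$ appropriately: the subgroup of $\mathrm{End}_\Gamma(V_\alpha)^{\times}$ generated by the images of the unit imaginaries is finite, so averaging the inner product over this subgroup (or equivalently, rescaling $f_\alpha(i),f_\alpha(j),f_\alpha(k)$) yields a compatible structure without altering the $\Gamma$-invariance already in place. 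Once this normalization is in hand, the Gram--Schmidt construction produces the desired basis and the three case computations finish the proof.
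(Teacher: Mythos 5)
Your proof follows essentially the same route as the paper's: the real form of Schur's lemma from \cite[Page 108]{JPS77} identifies $\mathrm{End}_\Gamma(V_\alpha)$ with $\real$, $\complex$ or $\quat$, and an orthonormal $\real$-basis adapted to the resulting $\mathbb{K}_\alpha$-module structure turns each intertwiner into $M_{\mathbb{K}_\alpha}(q)\otimes I_{k_\alpha/d_\alpha}$, case by case in $d_\alpha$. The one difference is that you explicitly justify the existence of such an adapted orthonormal basis by averaging the $\Gamma$-invariant inner product over the finite group generated by the images of the unit imaginaries --- a point the paper takes for granted --- and this averaging argument is sound (it preserves $\Gamma$-invariance because these images commute with all $B_\alpha(\sigma)$), though your parenthetical ``rescaling'' alternative would not by itself preserve the relation $f_\alpha(i)^2=-I$ and is best dropped.
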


	For the proof of Theorem \ref{thm:intro}, we will need the following result.
	\begin{lem} \label{lem:CommutRel}
		Let $i,j = 1, 2, \dots, L$,
		and 
		assume that $Y \in \mathrm{Mat}(r_i k_i,\, r_j k_j; \real)$ satisfies the condition
		\begin{equation} \label{eqn:CommutRel}
			[I_{r_i} \otimes B_i(\sigma)] \cdot Y = Y \cdot [I_{r_j} \otimes B_j(\sigma)] \qquad (\sigma \in \Gamma).
		\end{equation}
		If $i = j$, 
		then there exists $C \in \mathrm{Mat}(r_i,\, r_i; \mathbb{K}_i)$
		such that $Y = M_{\mathbb{K}_i}(C) \otimes I_{k_i/d_i}$.
		On the other hand, if $i \ne j$, then $Y = 0$.
	\end{lem}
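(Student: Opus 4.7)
The plan is a block-by-block application of Schur's lemma for real representations. Partition $Y$ into an $r_i \times r_j$ array of $k_i \times k_j$ blocks, $Y = (Y_{ab})_{1 \le a \le r_i,\, 1 \le b \le r_j}$. Since $I_{r_i} \otimes B_i(\sigma)$ acts on $Y$ from the left by multiplying each block row uniformly by $B_i(\sigma)$, and $I_{r_j} \otimes B_j(\sigma)$ acts on $Y$ from the right by multiplying each block column uniformly by $B_j(\sigma)$, the hypothesis \eqref{eqn:CommutRel} is equivalent to
\begin{equation*}
B_i(\sigma) \cdot Y_{ab} = Y_{ab} \cdot B_j(\sigma) \qquad (\sigma \in \Gamma,\ 1 \le a \le r_i,\ 1 \le b \le r_j),
\end{equation*}
so that each block $Y_{ab}$ is an intertwiner from $(B_j, V_j)$ to $(B_i, V_i)$.

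If $i \ne j$, the irreducible representations $B_i$ and $B_j$ are by construction of $A$ inequivalent, hence Schur's lemma over $\real$ (see \cite[Ch.~12]{JPS77}) gives $Y_{ab} = 0$ for all $(a,b)$ and therefore $Y = 0$. If $i = j$, each $Y_{ab}$ lies in $\mathrm{End}_\Gamma(V_i)$, and Lemma \ref{lem:BalphaProperty} furnishes a unique scalar $c_{ab} \in \mathbb{K}_i$ such that $Y_{ab} = M_{\mathbb{K}_i}(c_{ab}) \otimes I_{k_i/d_i}$. Assembling these coefficients into the matrix $C := (c_{ab})_{a,b=1}^{r_i} \in \mathrm{Mat}(r_i, r_i; \mathbb{K}_i)$ produces the natural candidate for the desired global description of $Y$.

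What remains, and is the only real work, is to check that the blockwise description we just obtained coincides with $M_{\mathbb{K}_i}(C) \otimes I_{k_i/d_i}$. This is a Kronecker-product bookkeeping statement: by definition of $M_{\mathbb{K}_i}$, the $r_i d_i \times r_i d_i$ matrix $M_{\mathbb{K}_i}(C)$ has $(a,b)$-block (of size $d_i \times d_i$) equal to $M_{\mathbb{K}_i}(c_{ab})$; tensoring with $I_{k_i/d_i}$ promotes each of those scalar positions to a $(k_i/d_i) \times (k_i/d_i)$ identity factor, yielding an $r_i k_i \times r_i k_i$ matrix whose $(a,b)$-block of size $k_i \times k_i$ is exactly $M_{\mathbb{K}_i}(c_{ab}) \otimes I_{k_i/d_i} = Y_{ab}$. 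I would carry this out by a direct entrywise comparison using $(A_1 \otimes A_2)(B_1 \otimes B_2) = (A_1 B_1) \otimes (A_2 B_2)$ and the multiplicativity of $M_{\mathbb{K}_i}$. The main obstacle is purely notational: ensuring that the two natural block structures on the $r_i k_i$-dimensional space, one coming from $V(B_i) = I_{r_i} \otimes V_i$ and the other from the nested Kronecker product $M_{\mathbb{K}_i}(C) \otimes I_{k_i/d_i}$, are aligned by the special bases chosen just before Example \ref{ex3}.
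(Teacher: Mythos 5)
Your proof is correct and follows essentially the same route as the paper's: block-decompose $Y$ into $k_i\times k_j$ blocks, observe each block intertwines $B_j$ with $B_i$, then apply Lemma \ref{lem:BalphaProperty} when $i=j$ and inequivalence of irreducibles when $i\ne j$. The only cosmetic difference is that you cite Schur's lemma for the $i\ne j$ case where the paper reproduces its standard kernel/image argument inline, and you spell out the Kronecker-product assembly of the blocks into $M_{\mathbb{K}_i}(C)\otimes I_{k_i/d_i}$ (which does check out, since the $(a,b)$-block of $A\otimes I_m$ is $A_{ab}\otimes I_m$) slightly more explicitly than the paper does.
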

	\begin{proof}
		Let us consider a block decomposition of $Y$ as
		$$
		Y = \begin{pmatrix} Y_{11} & Y_{12} & \dots & Y_{1,r_j} \\
			Y_{21} & Y_{22} & \dots & Y_{2, r_j} \\
			\vdots & \vdots & \ddots & \vdots \\
			Y_{r_i, 1} & Y_{r_i, 2} & \dots & Y_{r_i, r_j} \end{pmatrix},
		$$
		where each $Y_{ab}$ is a $k_i \times k_j$ matrix.
		Then \eqref{eqn:CommutRel} implies that 
		\begin{equation} \label{eqn:CommutRel2}
			B_i(\sigma) \cdot Y_{ab} = Y_{ab} \cdot B_j(\sigma) \qquad (\sigma \in \Gamma) 
		\end{equation}
		for all $a,b$.
		If $i = j$, then $Y_{ab} \in \mathrm{End}_\Gamma(\real^{k_i})$, 
		so that there exists $C_{ab} \in \mathbb{K}_i$ for which
		$Y_{ab} = M_{\mathbb{K}_i}(C_{ab}) \otimes I_{k_i/d_i}$ 
		thanks to  Lemma \ref{lem:BalphaProperty}.
		Let us consider the case $i \ne j$.
		Eq. \eqref{eqn:CommutRel2} tells us that $\mathrm{Ker}\, Y_{ab} \subset \real^{k_j}$ is a $\Gamma$-invariant subspace,
		which then equals $\{0\}$ or $\real^{k_j}$ because of the irreducibility of $B_j$.
		Similarly, since $\mathrm{Image}\,Y_{ab} \subset \real^{k_i}$ is a $\Gamma$-invariant subspace by \eqref{eqn:CommutRel2},
		$\mathrm{Image}\,Y_{ab}$ equals $\{0\}$ or $\real^{k_i}$.  
		Now suppose that $Y_{ab} \ne 0$. 
		Then $\mathrm{Ker}\,Y_{ab} = \{0\}$ and $\mathrm{Image}\,Y_{ab} = \real^{k_i}$ by the argument above,
		and it means that $Y_{ab}$ induces an isomorphism from $(B_j, \real^{k_j})$ onto
		$(B_{i }, \real^{k_i})$. 
		But this contradicts the fact that 
		the representations $B_i$ and $B_j$ are not equivalent for $i \ne j$. 
		Hence we get $Y_{ab} = 0$.
	\end{proof}
	
	\begin{proof}[Proof of Theorem \ref{thm:intro}]
		Take $y \in \transp{U_{\Gamma}} \cdot \Zsp_\Gamma \cdot U_\Gamma$ and consider the block decomposition 
		of $y$ as
		\[
		y = \begin{pmatrix} Y_{11} & Y_{12} & \dots & Y_{1L} \\ Y_{21} & Y_{22} & \dots & Y_{2 L} \\
			\vdots & \vdots & \ddots & \vdots \\ Y_{L1} & Y_{L2} & \cdots & Y_{LL} \end{pmatrix}
		\]
		with $Y_{ij} \in \mathrm{Mat}(r_i k_i, \,r_j k_j;\real)$.
		Then $x := U_\Gamma \cdot y \cdot \transp{U_\Gamma}$ belongs to $\Zsp_\Gamma$,
		so that \eqref{eq:newZ} implies
		\[
		R(\sigma) \cdot U_\Gamma \cdot y \cdot \transp{U_\Gamma} \cdot \transp{R(\sigma)}
		= U_\Gamma \cdot y \cdot \transp{U_\Gamma}
		\] 
		for $\sigma \in \Gamma$, and this equality is rewritten as
		$$
		[\transp{U_\Gamma} \cdot R(\sigma) \cdot U_\Gamma] \cdot y
		= y \cdot [\transp{U_\Gamma} \cdot R(\sigma) \cdot U_\Gamma]. 
		$$
		By (\ref{eqn:RU_diagonal}), we have
		\[
		[I_{r_i} \otimes B_i(\sigma)] \cdot Y_{ij} = Y_{ij} \cdot [I_{r_j} \otimes B_j(\sigma)].
		\]
		Lemma \ref{lem:CommutRel} tells us that $Y_{ij} = 0$ if $i \ne j$, 
		and that $Y_{ii} = M_{\mathbb{K}_i}(x_i) \otimes I_{k_i/d_i}$ with some $x_i \in \mathrm{Mat}(r_i,\,r_i; \mathbb{K}_i)$.
		Since $y$ is a symmetric matrix, the block $Y_{ii}$ is also symmetric, which implies that 
		$x_i \in \mathrm{Herm}(r_i; \mathbb{K}_i)$. Actually, the map
		$\iota\colon \bigoplus_{i=1}^L  \mathrm{Herm}(r_i; \mathbb{K}_i) \owns (x_i)_{i=1}^L
		\mapsto X \in \Zsp_\Gamma$
		given by \eqref{genform}
		gives a Jordan algebra isomorphism.
	\end{proof}

	\subsection{A comparison to the representation theory over the complex number field}\label{sec:newHI}
	Theorem \ref{thm:intro} has a much simpler counterpart
	in the representation theory over $\complex$,
	which we state in a spirit of  \cite{Sh13} and \cite{Sh132}.
	Let $\Gamma$ be a subgroup of $\mathfrak{S}_p$. 
	We regard the natural representation $R$ of $\Gamma$ as a complex representation
	$R\colon \Gamma \to \mathrm{GL}(p; \complex)$.
	Assume that $R$ is decomposed %into a direct sum of irreducible complex representations
	as $R \sim \bigoplus_{k=1}^K \vartheta_k^{\oplus s_k}$, 
	where $\vartheta_k\colon \Gamma \to \mathrm{GL}(m_k; \complex)$, $k=1, \ldots, K$,
	are mutually inequivalent irreducible complex representations.
	Let $W_\Gamma^{\complex}$ be the vector space consisting of 
	$A \in \mathrm{Mat}(p,p; \complex)$
	such that $A \cdot R(\sigma) = R(\sigma) \cdot A$ for all $\sigma \in \Gamma$.
	Then there exists a unitary $p\times p$ matrix $T_\Gamma$ for which 
	all the matrices $A \in W_{\Gamma}^{\complex}$ are simultaneously diagonalized as
	\begin{equation} \label{eqn:W_Gamma}
		T_\Gamma^* \cdot A \cdot T_\Gamma = 
		\begin{pmatrix} a_1 \otimes I_{m_1} & & & \\ & a_2 \otimes I_{m_2} & & \\
			& & \ddots & \\ & & & a_K \otimes I_{m_K} \end{pmatrix}, 
		\quad 
		\begin{array}{l}	a_k \in \mathrm{Mat}(s_k, \complex),\\
			k=1, \dots, K.
		\end{array}
	\end{equation}
	Precisely, the diagonal blocks in the right-hand side are of the form 
	$I_{m_k} \otimes a_k$ in \cite{Sh132}, but the difference can be made up by an appropriate permutation of the columns of $T_\Gamma$.
	Clearly the constants $(m_k, s_k)_k$ correspond to our structure constants $(k_i, r_i)_i$,
	while we can consider that 
	a complex counterpart for $d_i$ takes always the value $1$.
	Since $T_\Gamma$ is a unitary matrix, 
	we see that if $A$ is Hermitian, then 
	the corresponding matrices $a_k$, $k=1, \ldots, K$, are also Hermitian.
	This fact together with (\ref{eqn:W_Gamma}) is efficiently utilized
	in a study of complex covariance matrices with group symmetry in \cite{STW16}.
	On the other hand, 
	even though $A \in W_\Gamma^{\complex}$ is a real matrix, 
	the matrices $a_k$ are not necessarily real, 
	as \cite{Sh13} seem to misunderstand implicitly. 
	For instance,
	let us consider the case where $p=3$ 
	and $\Gamma \subset \mathfrak{S}_3$ is a cyclic group generated by 
	$\begin{pmatrix} 1 & 2 & 3 \end{pmatrix}$.
	Then $A \in W_\Gamma^{\complex}$ is of the form 
	$\begin{pmatrix} a & b & c \\ c & a & b \\ b & c & a \end{pmatrix}$
	with $a,b,c \in \complex$.
	Taking $T_\Gamma := \frac{1}{\sqrt{3}}
	\begin{pmatrix} 1 & 1 & 1 \\ 1 & \omega & \lbar{\omega} \\ 
		1 & \lbar{\omega} & \omega \end{pmatrix}$
	with $\omega := e^{2\pi \iu /3}$,
	we have
	$$
	T_\Gamma^\ast
	\begin{pmatrix} a & b & c \\ c & a & b \\ b & c & a \end{pmatrix}
	T_\Gamma
	= \begin{pmatrix} a_1 & & \\ & a_2 & \\ & & a_3 \end{pmatrix},
	$$
	where $a_1 := a+ b  + c$, $a_2 := a + b \omega + c \lbar{\omega}$, 
	and $a_3 := a + b \lbar{\omega} + c \omega$.
	In this case $m_k = s_k = 1$ for $k=1,2,3$
	(confer \cite[Remark 3.1]{Sh13}).
	Even if $a, b, c$ are real, the right-hand side above is not necessarily real but
	of the form $\mathrm{diag}(a_1, a_2, \lbar{a_2})$ with $a_1 \in \real$
	and $a_2 \in \complex$.
	Furthermore, if the matrix $A$ is symmetric, that is, real and Hermitian,
	then right-hand side becomes $\mathrm{diag}(a_1, a_2, a_2)$ with $a_1, a_2 \in \real$
	as is seen from Theorem \ref{thm:intro} with 
	$(k_1, k_2) = (1, 2),\, (r_1, r_2) = (1,1)$ and $(d_1, d_2) = (1,2)$.
	This observation tells us that
	the constants $(m_k, s_k)_k$ from complex representation theory
	are not sufficient for the description of the simultaneous diagonalization of 
	real symmetric matrices with group symmetry.

	\section{Additions to Section 2$_{mf}$}
	\subsection{Example to Section 2.3$_{mf}$}
	\begin{exa}\label{ex2}
		In this example we present a colored space $\Zsp_\Gamma \subset \mathrm{Sym}(16;\real)$, which has a component
		$\mathrm{Herm}(2; \quat)$.
		Let $\Gamma=\left<\sigma_1,\sigma_2\right>$ be the subgroup of $\mathfrak{S}_{16}$ generated by the two permutations
		\begin{align*}
			\sigma_1= (1, 2, 5, 6)(3, 4, 7, 8)(9, 10, 13, 14)(11, 12, 15, 16),\\
			\sigma_2=(1, 3, 5, 7)(2, 8, 6, 4)(9, 11, 13, 15)(10, 16, 14, 12).
		\end{align*}
		The space $\mathcal{Z}_\Gamma$ consists of matrices of the form
		\[ 
		X=\left(
		\begin{array}{cccccccccccccccc}
			\alpha_1 & \alpha_2 & \alpha_3 & \alpha_4 & \alpha_5 & \alpha_2 & \alpha_3 & \alpha_4 & \gamma_1 & \gamma_2 & \gamma_3 & \gamma_4 & \gamma_5 & \gamma_6 & \gamma_7 & \gamma_8 \\
			\alpha_2 & \alpha_1 & \alpha_4 & \alpha_3 & \alpha_2 & \alpha_5 & \alpha_4 & \alpha_3 & \gamma_6 & \gamma_1 & \gamma_8 & \gamma_3 & \gamma_2 & \gamma_5 & \gamma_4 & \gamma_7 \\
			\alpha_3 & \alpha_4 & \alpha_1 & \alpha_2 & \alpha_3 & \alpha_4 & \alpha_5 & \alpha_2 & \gamma_7 & \gamma_4 & \gamma_1 & \gamma_6 & \gamma_3 & \gamma_8 & \gamma_5 & \gamma_2 \\
			\alpha_4 & \alpha_3 & \alpha_2 & \alpha_1 & \alpha_4 & \alpha_3 & \alpha_2 & \alpha_5 & \gamma_8 & \gamma_7 & \gamma_2 & \gamma_1 & \gamma_4 & \gamma_3 & \gamma_6 & \gamma_5 \\
			\alpha_5 & \alpha_2 & \alpha_3 & \alpha_4 & \alpha_1 & \alpha_2 & \alpha_3 & \alpha_4 & \gamma_5 & \gamma_6 & \gamma_7 & \gamma_8 & \gamma_1 & \gamma_2 & \gamma_3 & \gamma_4 \\
			\alpha_2 & \alpha_5 & \alpha_4 & \alpha_3 & \alpha_2 & \alpha_1 & \alpha_4 & \alpha_3 & \gamma_2 & \gamma_5 & \gamma_4 & \gamma_7 & \gamma_6 & \gamma_1 & \gamma_8 & \gamma_3 \\
			\alpha_3 & \alpha_4 & \alpha_5 & \alpha_2 & \alpha_3 & \alpha_4 & \alpha_1 & \alpha_2 & \gamma_3 & \gamma_8 & \gamma_5 & \gamma_2 & \gamma_7 & \gamma_4 & \gamma_1 & \gamma_6 \\
			\alpha_4 & \alpha_3 & \alpha_2 & \alpha_5 & \alpha_4 & \alpha_3 & \alpha_2 & \alpha_1 & \gamma_4 & \gamma_3 & \gamma_6 & \gamma_5 & \gamma_8 & \gamma_7 & \gamma_2 & \gamma_1 \\
			\gamma_1 & \gamma_6 & \gamma_7 & \gamma_8 & \gamma_5 & \gamma_2 & \gamma_3 & \gamma_4 & \beta_1 & \beta_2 & \beta_3 & \beta_4 & \beta_5 & \beta_2 & \beta_3 & \beta_4 \\
			\gamma_2 & \gamma_1 & \gamma_4 & \gamma_7 & \gamma_6 & \gamma_5 & \gamma_8 & \gamma_3 & \beta_2 & \beta_1 & \beta_4 & \beta_3 & \beta_2 & \beta_5 & \beta_4 & \beta_3 \\
			\gamma_3 & \gamma_8 & \gamma_1 & \gamma_2 & \gamma_7 & \gamma_4 & \gamma_5 & \gamma_6 & \beta_3 & \beta_4 & \beta_1 & \beta_2 & \beta_3 & \beta_4 & \beta_5 & \beta_2 \\
			\gamma_4 & \gamma_3 & \gamma_6 & \gamma_1 & \gamma_8 & \gamma_7 & \gamma_2 & \gamma_5 & \beta_4 & \beta_3 & \beta_2 & \beta_1 & \beta_4 & \beta_3 & \beta_2 & \beta_5 \\
			\gamma_5 & \gamma_2 & \gamma_3 & \gamma_4 & \gamma_1 & \gamma_6 & \gamma_7 & \gamma_8 & \beta_5 & \beta_2 & \beta_3 & \beta_4 & \beta_1 & \beta_2 & \beta_3 & \beta_4 \\
			\gamma_6 & \gamma_5 & \gamma_8 & \gamma_3 & \gamma_2 & \gamma_1 & \gamma_4 & \gamma_7 & \beta_2 & \beta_5 & \beta_4 & \beta_3 & \beta_2 & \beta_1 & \beta_4 & \beta_3 \\
			\gamma_7 & \gamma_4 & \gamma_5 & \gamma_6 & \gamma_3 & \gamma_8 & \gamma_1 & \gamma_2 & \beta_3 & \beta_4 & \beta_5 & \beta_2 & \beta_3 & \beta_4 & \beta_1 & \beta_2 \\
			\gamma_8 & \gamma_7 & \gamma_2 & \gamma_5 & \gamma_4 & \gamma_3 & \gamma_6 & \gamma_1 & \beta_4 & \beta_3 & \beta_2 & \beta_5 & \beta_4 & \beta_3 & \beta_2 & \beta_1 
		\end{array}
		\right).
		\]
		The irreducible factorization of the determinant is given by
		{\tiny
			\begin{align*}
				\Det{X} &= \left((\gamma_1-\gamma_5)^2+(\gamma_2-\gamma_6)^2+(\gamma_3-\gamma_7)^2+(\gamma_4-\gamma_8)^2 -(\alpha_1-\alpha_5)(\beta_1-\beta_5)\right)^4 \\
				&\cdot\left((\alpha_1 - 2 (\alpha_2 + \alpha_3 - \alpha_4) + \alpha_5) (\beta_1 - 2 (\beta_2 + \beta_3 - \beta_4) + 
				\beta_5) - (\gamma_1 - \gamma_2 - \gamma_3 + \gamma_4 + \gamma_5 - \gamma_6 - \gamma_7 + \gamma_8)^2\right) \\
				&\cdot\left((\alpha_1 - 2 (\alpha_2 - \alpha_3 + \alpha_4) + \alpha_5) (\beta_1 - 2 (\beta_2 - \beta_3 + \beta_4) + 
				\beta_5) - (\gamma_1 - \gamma_2 + \gamma_3 - \gamma_4 + \gamma_5 - \gamma_6 + \gamma_7 - \gamma_8)^2\right) \\
				&\cdot\left(
				(\alpha_1 + 2 (\alpha_2 - \alpha_3 - \alpha_4) + \alpha_5) (\beta_1 + 2 (\beta_2 - \beta_3 - \beta_4) + 
				\beta_5) - (\gamma_1 + \gamma_2 - \gamma_3 - \gamma_4 + \gamma_5 + \gamma_6 - \gamma_7 - \gamma_8)^2
				\right) \\
				& \cdot\left(
				(\alpha_1 + 2 (\alpha_2 + \alpha_3 + \alpha_4) + \alpha_5) (\beta_1 + 2 (\beta_2 + \beta_3 + \beta_4) + 
				\beta_5) - (\gamma_1 + \gamma_2 + \gamma_3 + \gamma_4 + \gamma_5 + \gamma_6 + \gamma_7 + \gamma_8)^2
				\right).
			\end{align*}
		}\noindent Thus, Lemma 4$_{mf}$ gives us that $L=5$ and
		\begin{align*}
			r  = (2,2,2,2,2), \quad
			k  = (4,1,1,1,1), \quad 
			d  = (4,1,1,1,1).
		\end{align*}
		This in turn implies
		\[ 
		\mathcal{Z}_\Gamma \,{\simeq}\, \mathrm{Herm}(2; \mathbb{H}) \oplus \mathrm{Sym}(2;\real)^{\oplus 4}.
		\]
		As a matter of fact,
		the group $\Gamma$  has four $1$-dimensional representations and one $4$-dimensional real irreducible representation,
		and each representation appears twice in $\real^{16}$.
	\end{exa}	
	
	\subsection{Proofs of Theorem 5$_{mf}$ and Theorem 6$_{mf}$}
	\begin{proof}[Proof of Theorem 5$_{mf}$ and Theorem 6$_{mf}$]
		Let $M:=\left\lfloor\frac{N}{2}\right\rfloor$ and denote the  irreducible representations of $\Gamma$ by
		\begin{align*}
			B_0 &\colon \Gamma \owns \sigma^k \mapsto 1 \in \mathrm{GL}(1; \real),\\
			B_\alpha &\colon \Gamma \owns \sigma^k \mapsto \mathrm{Rot}\left(\frac{2\pi \alpha k}{N}\right) \in \mathrm{GL}(2; \real)
			\quad (1 \le \alpha < N/2),\\
			B_{N/2} &\colon \Gamma \owns \sigma^k \mapsto (-1)^k \in \mathrm{GL}(1;\real)
			\quad (\mbox{if }N \mbox{ is even}),
		\end{align*}
		where 
		$\mathrm{Rot}(\theta)$
		denotes the rotation matrix  
		$\begin{pmatrix} \cos \theta & - \sin \theta \\ \sin \theta & \cos \theta \end{pmatrix}$
		for $\theta \in \real$.
		Then all the equivalence classes of the irreducible representations of $\Gamma$ are	
		$[B_0],\, [B_1], \dots, [B_M]$ 
		whether $N = 2M$ or $N = 2M+1$.
		We have
		$k_\alpha = d_\alpha = \begin{cases} 1 & (\alpha = 0 \mbox{ or }N/2), \\ 2 & \mbox{(otherwise)}. \end{cases}$
		
		Recall that $\{i_1, \dots, i_C\}$ is a complete system of representatives of the $\Gamma$-orbits,
		and, for each $c=1, \dots, C$, 
		$p_c$ is the cardinality of the $\Gamma$-orbit through $i_c$.
		Let $\zeta_c := \exp(2\pi \sqrt{-1}/p_c)$.
		When $1 \le \beta < p_c/2$, 
		we have 
		\[
		v^{(c)}_{2\beta} + \sqrt{-1} v^{(c)}_{2 \beta +1} 
		= \sqrt{\frac{2}{p_c}} \sum_{k=0}^{p_c-1} \zeta_c^{\beta k} e_{\sigma^k(i_c)}.
		\]
		Thus
		\begin{align*}
			R&(\sigma)( v^{(c)}_{2\beta} + \sqrt{-1} v^{(c)}_{2 \beta +1})
			= \sqrt{\frac{2}{p_c}} \sum_{k=0}^{p_c-1} \zeta_c^{\beta k} e_{\sigma^{k+1}(i_c)}
			= \sqrt{\frac{2}{p_c}} \sum_{k=0}^{p_c-1} \zeta_c^{\beta (k-1)} e_{\sigma^{k}(i_c)}
			\\
			&= \zeta_c^{-\beta}( v^{(c)}_{2\beta} + \sqrt{-1} v^{(c)}_{2 \beta +1})\\
			&= \Bigl\{ \cos \Bigl( \frac{2\pi \beta}{p_c}\Bigr) v^{(c)}_{2\beta} 
			+ \sin \Bigl( \frac{2\pi \beta}{p_c}\Bigr) v^{(c)}_{2\beta +1} \Bigr\} 
			+\sqrt{-1} \Bigl\{ -\sin \Bigl( \frac{2\pi \beta}{p_c}\Bigr) v^{(c)}_{2\beta}
			+ \cos \Bigl( \frac{2\pi \beta}{p_c}\Bigr) v^{(c)}_{2\beta+1} \Bigr\},   
		\end{align*}
		where we have used $\sigma^{p_c}(i_c) = i_c$ and $\zeta_c^{p_c}=1$ at the second equality.
		It follows that 
		\begin{equation} \label{eqn:alpha-beta}
			R(\sigma) \begin{pmatrix} v^{(c)}_{2\beta} & v^{(c)}_{2 \beta +1} \end{pmatrix}
			= \begin{pmatrix} v^{(c)}_{2\beta} & v^{(c)}_{2 \beta +1} \end{pmatrix} \mathrm{Rot}\Bigl( \frac{2\pi \beta}{p_c} \Bigr)
			= \begin{pmatrix} v^{(c)}_{2\beta} & v^{(c)}_{2 \beta +1} \end{pmatrix} B_{\alpha}(\sigma)
		\end{equation}
		with ${\beta}/{p_c} = {\alpha}/{N}$.
		Similarly, we have
		\begin{align*}
			R(\sigma) v^{(c)}_1 &= v^{(c)}_1 = B_0(\sigma) v^{(c)}_1, \\
			R(\sigma) v^{(c)}_{p_c} &= -v^{(c)}_{p_c} = B_{N/2}(\sigma) v^{(c)}_{p_c}
			\quad (\mbox{if }p_c \mbox{ and }N \mbox{ are even}).
		\end{align*}
		Therefore, for $\alpha = 0, \dots, \lfloor N/2\rfloor$,
		the multiplicity $r_\alpha$ of the representation $B_\alpha$ of $\Gamma$ in $(R, \real^p)$ is equal to the number of $c$
		such that ${\beta}/{p_c} = {\alpha}/{N}$ with some $\beta\in\mathbb{N}$.
		In other words,
		\begin{equation} \label{eqn:r_alpha}
			r_\alpha = \#\set{c}{\alpha p_c  \mbox{ is a multiple of }N} \quad (0 \le \alpha \le \lfloor N/2\rfloor).
		\end{equation}    
		Then we have
		\[
		\Zsp_\Gamma \simeq  
		\bigoplus_{r_\alpha>0} \mathrm{Herm}(r_\alpha; \mathbb{K}_\alpha). 
		\]
	\end{proof}
	
	\subsection{Finding structure constants when $\Gamma$ is not cyclic}
	In Section 2.3$_{mf}$, we gave  a general algorithm for determining the  structure constants as well as the invariant measure $\varphi_\Gamma$. In principle, the factorization of a determinant can be done e.g. in \textsc{Python}, however there are some limitations regarding the dimension of a matrix. If the $p\times p$ matrix is not sparse, then the number of terms in the usual Laplace expansion of a determinant produces a polynomial with $p!$ terms. The RAM memory requirements for calculating such a polynomial would be in excess of $p!$, which cannot be handled on a standard PC even for moderate $p$. Depending on the subgroup and the method of calculating the determinant, we were able to obtain the determinant for models of dimensions up to $10$-$20$. 
	In order to factorize the determinant for moderate to high dimensions, we want to find an orthogonal  matrix $U$ such that $\transp{U}\cdot X\cdot U$ is sparse enough for a computer to calculate its determinant $\Det{\transp{U}\cdot X\cdot U}=\Det{X}$. The matrix $U_\Gamma$ from \eqref{genform} is in general very hard to obtain, but we propose an easy surrogate.
	
	\begin{pro}\label{pro:0}
		Assume that $\Gamma$ be a subgroup of $\mathfrak{S}_p$. Take $\sigma_0 \in \mathfrak{S}_p$
		for which the cyclic group $\Gamma_0 := \langle \sigma_0 \rangle$ generated by $\sigma_0$
		has the same orbits in $V$ as $\Gamma$. 
		Then, for any $X\in\mathcal{Z}_\Gamma$ one has
		\begin{align}\label{eq:xy}
			\transp{U_{\Gamma_0}} \cdot X \cdot U_{\Gamma_0} = \begin{pmatrix} x_1 & 0 \\ 0 & y \end{pmatrix}
		\end{align}
		with $x_1 \in \mathrm{Sym}(C; \real)$
		and $y \in \mathrm{Sym}(p-C; \real)$, where $C$ is the number of cycles of $\sigma_0$. Matrix $U_{\Gamma_0}$ is the orthogonal matrix constructed in Theorem 6$_{mf}$ for the cyclic subgroup $\Gamma_0$.		
	\end{pro}
	\begin{proof}
		%Let $\Gamma$ be a subgroup of $\mathfrak{S}_p$, which is not necessarily a cyclic group. Let $\{i_1, i_2, \dots, i_C\}$ be  a complete family of representatives of the $\Gamma$-orbits  in $V=\{1, \dots, p\}$.
		%	Take $\sigma_0 \in \mathfrak{S}_p$
		%	for which the cyclic group $\Gamma_0 := \langle \sigma_0 \rangle$ generated by $\sigma_0$
		%	has the same orbits in $V$ as $\Gamma$ does.
		
		We observe that a vector $v \in \real^p$ is $\Gamma$-invariant
		(i.e. $R(\sigma) v = v$ for all $\sigma \in \Gamma$) 
		if and only if $R(\sigma_0) v = v$ if and only if $v$ is constant on $\Gamma$-orbits (i.e. $v_i = v_j$ if $i$ and $j$ belong to the same orbit of $\Gamma$).
		%	Let $U_{\Gamma_0}$ be the orthogonal matrix constructed as in Theorem \ref{thm:cyclic2}	from the cyclic group $\Gamma_0$.
		The first $C$ column vectors of $U_{\Gamma_0}$ are 
		$v^{(1)}_1, v^{(2)}_1, \dots, v^{(C)}_1$, which are $\Gamma$-invariant.
		The space $V_1 := \mathrm{span}\set{ v^{(c)}_1 }{c=1, \dots, C} \subset \real^p$ 
		is the trivial-representation-component of $\Gamma$ as explained after \eqref{eqn:Balpha_decomp}.
		Therefore, 
		if $\alpha_1$ is the trivial representation of 
		$\Gamma$, then $r_1 = C$ and $d_1 = k_1 = 1$.
		
		The orthogonal complement $V_1^{\perp}$ of $V_1$ is spanned by the rest of
		$v^{(c)}_{\beta}$, $1 \le c \le C$, $1 < \beta \le 2\lfloor p_c/2\rfloor$.
		For $X \in \Zsp_{\Gamma}$, 
		we see that $X\cdot v \in V_1$ for $v \in V_1$ 
		and that $X\cdot w \in V_1^{\perp}$ for $w \in V_1^{\perp}$.
		In this way we obtain \eqref{eq:xy}.
	\end{proof}
	
	We note that in general, there are no inclusion relations between groups $\Gamma$ and $\Gamma_0$. Moreover, the correspondence $\phi_1\colon \Zsp_{\Gamma} \owns X \mapsto x_1 \in \mathrm{Sym}(C; \real)$
	is exactly the Jordan algebra homomorphism 
	defined before Corollary 3$_{mf}$.
	By Proposition \ref{pro:0} we obtain
	\begin{align}\label{eq:Detxy}
		\Det{X} = \Det{x_1}\Det{y},
	\end{align}
	while the factor $\Det{x_1} = \det(\phi_1(X))$ 
	is an irreducible polynomial of degree $r_1 = C$. In this way, for any subgroup $\Gamma$, we are able to factor out the polynomial of degree equal to the number of $\Gamma$-orbits in $V$ easily. On the other hand, 
	the factorization of $\Det{y}$ requires 
	study of the subrepresentation $R$ of $\Gamma$ on $V_1^{\perp}$, 
	where the group $\Gamma_0$ is useless in general.

	\begin{exa}\label{ex:ex4}
		Let $\Gamma=\langle (1,2,3), \, (4,5,6)\rangle \subset \mathfrak{S}_6$, which is not a cyclic group.
		The space $\Zsp_\Gamma$ consists of symmetric matrices of the form
		\[
		X= \begin{pmatrix} a & b & b & e & e & e \\ b & a & b & e & e & e \\ b & b & a & e & e & e \\
			e & e & e & c & d & d \\ e & e & e & d & c & d \\ e & e & e & d & d & c \end{pmatrix}
		\]
		and moreover, $\Zsp_{\Gamma}$ does not coincide with $\Zsp_{\left<\sigma\right>}$ for any $\sigma\in\mathfrak{S}_6$.
		Noting that the group
		$\Gamma$ has two orbits: $\{1,2,3\}$ and $\{4,5,6 \}$,
		we define 
		$\sigma_0 := (1,2,3)(4,5,6)$.
		Taking $i_1 = 1$ and $i_2 = 4$, 
		we have
		\[
		U_{\Gamma_0}
		= \begin{pmatrix}
			1/ \sqrt{3} & 0 &  \sqrt{2/3} & 0 & 0 & 0\\
			1/\sqrt{3} & 0 & -1/ \sqrt{6} & 1/\sqrt{2} & 0 & 0 \\
			1/\sqrt{3} & 0 & -1/ \sqrt{6} &-1/\sqrt{2} & 0 & 0 \\
			0 & 1/ \sqrt{3} & 0 & 0 &  \sqrt{2/3} & 0 \\
			0 & 1/ \sqrt{3} & 0 & 0 &  -1/\sqrt{6} & 1/\sqrt{2} \\
			0 & 1/ \sqrt{3} & 0 & 0 &  -1/\sqrt{6} &-1/\sqrt{2} 
		\end{pmatrix}.
		\]
		Note that the first two column vectors
		$\transp{\begin{pmatrix} 1/ \sqrt{3}, & 1/ \sqrt{3},  & 1/ \sqrt{3}, & 0, & 0, & 0 \end{pmatrix} }$
		and\\
		$\transp{ \begin{pmatrix} 0, & 0, & 0, & 1/ \sqrt{3}, & 1/ \sqrt{3},  & 1/ \sqrt{3} \end{pmatrix} }$
		of $U_{\Gamma_0}$ are $\Gamma$-invariant.
		By direct calculation we verify that 
		$\transp{U_{\Gamma_0}} \cdot X \cdot U_{\Gamma_0}$ is of the form
		\[
		\begin{pmatrix} 
			A & B & 0 & 0 & 0 & 0 \\
			B & C & 0 & 0 & 0 & 0 \\
			0 & 0 & D & 0 & 0 & 0 \\
			0 & 0 & 0 & D & 0 & 0 \\
			0 & 0 & 0 & 0 & E & 0 \\
			0 & 0 & 0 & 0 & 0 & E 
		\end{pmatrix},
		\]
		where $A,B, \cdots, E$ are linear functions of $a,b, \cdots, e$.
		The matrices $x_1$ and $y$ are
		$\begin{pmatrix} A & B \\ B & C \end{pmatrix}$
		and 
		$\begin{pmatrix} D & 0 & 0 & 0 \\ 0 & D & 0 &  0 \\ 0 & 0 & E & 0 \\ 0 & 0 & 0 & E \end{pmatrix}$
		respectively. The matrix $y$ is of such simple form, because $\Gamma_0$ is a subgroup of $\Gamma$ in this case.
	\end{exa}
	
	We cannot expect that the matrix $y$ in \eqref{eq:xy} is always of a nice form, as in the example above. 
	However, we note that in many examples we considered, the matrix $y$ was sparse, which also makes the problem of calculating $\Det{X}$ much more feasible on a standard PC.
	
	In general $\Gamma_0$ defined above is not a subgroup of $\Gamma$. As we argue below, valuable insight about  the factorization of $\Zsp_{\Gamma}$ can be obtained by studying 
	cyclic subgroups of $\Gamma$. The argument is based on the following easy result.
	\begin{lem}\label{lem:incl}
		Let $\Gamma_1$ be a subgroup of $\Gamma$. Then $\Zsp_{\Gamma}\subset \Zsp_{\Gamma_1}$.
	\end{lem}
	Let $\Gamma_1$ be a cyclic subgroup of $\Gamma$. Then using Theorem 5$_{mf}$, we can easily calculate structure constant corresponding to $\Gamma_1$. Let $U_{\Gamma_1}$ be the orthogonal matrix constructed in Theorem 6$_{mf}$. By Lemma \ref{lem:incl} and \eqref{genform}, for any $X\in\Zsp_{\Gamma}$ the matrix $\transp{U_{\Gamma_1}} \cdot X \cdot U_{\Gamma_1}$ is of the form
	\[
	\begin{pmatrix} 
		M_{\mathbb{K}_1}(x_1^\prime) \otimes I_{\tfrac{k_1}{d_1}} & & & \\
		& M_{\mathbb{K}_2}(x_2^\prime) \otimes I_{\tfrac{k_2}{d_2}} & & \\
		& & \ddots & \\
		& & & M_{\mathbb{K}_L}(x_L^\prime ) \otimes I_{\tfrac{k_L}{d_L}} 
	\end{pmatrix},
	\]
	where $x_i^\prime \in \mathrm{Herm}(r_i; \K_i)$, $i=1,\ldots,L$.
	In particular, we have $k_1=d_1=1$ and $r_1$ is the number of $\Gamma_1$-orbits in $\{1,\ldots,p\}$. Thus, we have 
	$M_{\mathbb{K}_1}(x_1^\prime) \otimes I_{k_1/d_1}  = x_1^\prime\in \mathrm{Sym}(r_1;\real)$.
	In contrast to \eqref{eq:xy}, $x_1^\prime$ in general can be further factorized
	and we know that $\Det{x_1}$ from \eqref{eq:Detxy} is an irreducible factor 
	of $\Det{x_1^\prime}$.
	In conclusion,
	each cyclic subgroup of  the general group $\Gamma$ brings various information about the factorization.
	
	\begin{exa}
		We continue Example \ref{ex:ex4}. Let $\Gamma_1 = 
		\langle (1,2,3) \rangle$, which is a subgroup of $\Gamma$.
		There are four $\Gamma_1$-orbits in $V$, that is, $\{1,2,3\}$, $\{4\}$, $\{5\}$, and $\{6\}$.
		We have
		\[
		U_{\Gamma_1} = \begin{pmatrix}
			1/\sqrt{3} & 0 & 0 & 0 & \sqrt{2/3} & 0 \\
			1/\sqrt{3} & 0 & 0 & 0 &-1/\sqrt{6} & 1/\sqrt{2} \\
			1/\sqrt{3} & 0 & 0 & 0 &-1/\sqrt{6} &-1/\sqrt{2} \\
			0              & 1 & 0 & 0 & 0 & 0 \\
			0              & 0 & 1 & 0 & 0 & 0 \\
			0              & 0 & 0 & 1 & 0 & 0 
		\end{pmatrix}. 
		\]
		For $X \in Z_{\Gamma}$, 
		we see that
		$\transp{U_{\Gamma_1}} \cdot X \cdot U_{\Gamma_1}$ is of the form
		\[\begin{pmatrix}
			A_{11} & A_{21} & A_{31} & A_{41} & 0 & 0 \\
			A_{21} & A_{22} & A_{32} & A_{42} & 0 & 0 \\
			A_{31} & A_{32} & A_{33} & A_{43} & 0 & 0 \\
			A_{41} & A_{42} & A_{43} & A_{44} & 0 & 0 \\
			0 & 0 & 0 & 0 & D & 0 \\
			0 & 0 & 0 & 0 & 0 & D 
		\end{pmatrix}, 
		\]
		where $A_{ij}$ are linear functions of $a,b, \dots, e$, 
		but they are not linearly independent.  
		Indeed, we have
		\[ 
		\mathrm{Det} \begin{pmatrix}
			A_{11} & A_{21} & A_{31} & A_{41}\\
			A_{21} & A_{22} & A_{32} & A_{42}\\
			A_{31} & A_{32} & A_{33} & A_{43}\\
			A_{41} & A_{42} & A_{43} & A_{44} \end{pmatrix}
		= E^2 \det \begin{pmatrix} A & B \\ B & C \end{pmatrix},
		\]
		which exemplifies the fact that $\Det{x_1}$ is an irreducible factor of $\Det{x_1^\prime}$.
	\end{exa}

	\subsection{Gamma integrals}
	In this section we prove Proposition 7$_{mf}$, Theorem 8$_{mf}$ and Theorem 9$_{mf}$. Proofs of these results are based on Lemma \ref{lem:integral} below.
	
	The key ingredient to compute the Gamma integral on $\Pcone_\Gamma$ is the block decomposition of $\mathcal{Z}_\Gamma$. 	We assume that $\mathcal{Z}_\Gamma$ is in the form of \eqref{genform}.
	Let $\Omega_i$ denote the symmetric cone of the simple Jordan algebra $\A_i =\mathrm{Herm}(r_i; \K_i)$ and $d_i=\dim_{\real} \K_i$, $i=1,\ldots,L$. 
	We have $\dim\Omega_i= r_i + r_i(r_i-1)d_i/2$. 
	Recall that, for $X\in \Zsp_\Gamma$ represented as in \eqref{genform}, we
	write $\phi_i(X)= x_i \in \A_i$ for $i=1, \dots, L$.
	
	\begin{lem}\label{lem:integral}
		For any $Y\in\Pcone_\Gamma$ and $\lambda_i>-1$, $i=1,\ldots,L$, we have 
		\begin{align}\label{eq:integral}
			\int_{\Pcone_\Gamma} \prod_{i=1}^L \det(\phi_i(X))^{\lambda_i} e^{-\Tr{Y\cdot X}} \dd X = 
			e^{-B_\Gamma} \left( \prod_{i=1}^L  k_i^{-r_i\lambda_i }\right) \prod_{i=1}^L  \frac{\Gamma_{\Omega_i}(\lambda_i+\dim\Omega_i/r_i)}{ \det(\phi_i(Y))^{\lambda_i+\dim\Omega_i/r_i}},
		\end{align}
		where $B_\Gamma$ is defined in $(15)_{mf}$.
	\end{lem}
	
	\begin{proof}[Proof of Lemma \ref{lem:integral}]
		Denote the left hand side of \eqref{eq:integral} by $I$. Let us change variables $x_i=\phi_i(X)$ for $i=1,\ldots,L$.
		By $(10)_{mf}$ and $(14)_{mf}$ we obtain
		\begin{align*}
			I &= e^{B_\Gamma}  \prod_{i=1}^L \int_{\Omega_i} \det(x_i)^{\lambda_i} e^{- k_i \mathrm{tr}[\phi_i(Y)\cdot x_i]}  m_i(\mathrm{d}x_i).
			\intertext{Each integral can be calculated using $(13)_{mf}$ for $\lambda_i>-1$ and $\phi_i(Y)\in\Omega_i$, $i=1,\ldots,L$. Hence,}
			I& = e^{B_\Gamma}  \prod_{i=1}^L \Gamma_{\Omega_i}(\lambda_i+\dim\Omega_i/r_i) \det\left(k_i\, \phi_i(Y)\right)^{-\lambda_i-\dim\Omega_i/r_i} \\
			&= e^{B_\Gamma}  \left( \prod_{i=1}^L  k_i^{-r_i\lambda_i - \dim \Omega_i }\right)  \prod_{i=1}^L\frac{\Gamma_{\Omega_i}(\lambda_i+\dim\Omega_i/r_i)}{ \det(\phi_i(Y))^{\lambda_i+\dim\Omega_i/r_i}} \\
			&= e^{-B_\Gamma}  \left( \prod_{i=1}^L  k_i^{-r_i\lambda_i}\right) \prod_{i=1}^L \frac{\Gamma_{\Omega_i}(\lambda_i+\dim\Omega_i/r_i)}{ \det(\phi_i(Y))^{\lambda_i+\dim\Omega_i/r_i}} 
		\end{align*}
		and so we obtain \eqref{eq:integral}.
	\end{proof}
	
	\begin{proof}[Proof of Proposition 7$_{mf}$]
		Recall that $\Pcone_{\Gamma}$ is a symmetric cone, so that it coincides with its dual cone, $\Pcone_{\Gamma}^\ast$.  Thus,
		\[
		\varphi_\Gamma(Y) =  e^{B_\Gamma} { \left( \prod_{i=1}^L \frac{1}{ \Gamma_{\Omega_i}(\dim \Omega_i /r_i)}\right) }
		\int_{\Pcone_{\Gamma}} e^{-\Tr{Y\cdot X}}\dd X\qquad (Y\in \Pcone_\Gamma).
		\]
		Setting $\lambda_1=\ldots=\lambda_L=0$ in \eqref{eq:integral} we obtain the expression for $\varphi_\Gamma(Y)$.
	\end{proof}
	
	\begin{proof}[Proof of Theorem 8$_{mf}$ and Theorem 9$_{mf}$]
		Recall that for $X\in\Pcone_\Gamma$ we have $\Det{X}=\prod_{i=1}^L \det(\phi_i(X))^{k_i}$, 
		where the map
		$\phi_i: \Zsp_\Gamma \to \mathrm{Herm} (r_i; \K_i)$ is a Jordan algebra homomorphism, $i=1,\ldots,L$.
		
		If $\lambda_i=k_i\lambda-\dim\Omega_i/r_i$ with
		$\lambda > \max_{i=1,\ldots,L} \left\{ (r_i-1)d_i/(2 k_i)\right\}$,
		then \eqref{eq:integral} implies
		\[
		\prod_{i=1}^L  k_i^{-r_i\lambda_i} = e^{-A_\Gamma \lambda+2B_\Gamma}
		\quad\mbox{and}\quad 
		\prod_{i=1}^L \det(\phi_i(Y))^{-\lambda_i-\dim\Omega_i/r_i} = \left(\prod_{i=1}^L \det(\phi_i(Y))^{k_i}\right)^{-\lambda}
		\]
		
		If $\lambda_i=k_i\lambda$ with 	$\lambda > \max_{i=1,\ldots,L} \left\{-1/k_i\right\}$, 
		then by \eqref{eq:integral} we obtain
		\[
		\prod_{i=1}^L  k_i^{-r_i\lambda_i} = e^{-A_\Gamma \lambda}
		\quad\mbox{and}\quad 
		\prod_{i=1}^L \det(\phi_i(Y))^{-\lambda_i-\dim\Omega_i/r_i} = \left(\prod_{i=1}^L \det(\phi_i(Y))^{k_i}\right)^{-\lambda}\varphi_\Gamma(Y).
		\]
	\end{proof}
	
	\subsection{Jacobian}
	\begin{proof}[Proof of Lemma 13$_{mf}$]
		First observe that
		\begin{align*}
			(X+h)^{-1}-X^{-1}  = (X+h)^{-1}\cdot \left[X - (X+h) \right] \cdot X^{-1}
			= - X^{-1}\cdot h\cdot X^{-1} +o(h),
		\end{align*}
		so that, 
		the Jacobian of $\Pcone_{\Gamma}\ni X\mapsto X^{-1}\in \Pcone_{\Gamma}$ equals $\mathrm{Det}_{\mathrm{End}}( \mathbb{P}_{X^{-1}})$, where $\mathrm{Det}_{\mathrm{End}}$ is the determinant in the space of endomorphisms of $\Zsp_{\Gamma}$ and for any $X\in \Zsp_{\Gamma}$ by $\mathbb{P}_{X}$  we denote the linear map on $\Zsp_{\Gamma}$ to itself defined by $\mathbb{P}_{X} Y = X\cdot Y\cdot X$. It is easy to see that for any $X\in\Pcone_{\Gamma}$ we have $\mathbb{P}_{X}\in \mathrm{G}(\Pcone_{\Gamma})$. Indeed, since $\mathbb{P}_X Y$ is positive definite for $Y\in\Pcone_{\Gamma}$, it is enough to verify that 
		\[ 
		R(\sigma)\cdot \left[ \mathbb{P}_XY \right] = \left[ \mathbb{P}_XY \right]\cdot R(\sigma)\qquad (\sigma\in\Gamma).
		\]
		This follows quickly by the fact that $X, Y\in\Pcone_{\Gamma}$. 
		Further, by the $\mathrm{G}(\Pcone_{\Gamma})$ invariance of $\varphi_\Gamma$, we have
		\[
		\varphi_{\Gamma}( g X) = \left| \mathrm{Det}_{\mathrm{End}} (g) \right|^{-1} \varphi_{\Gamma}(X)\qquad (g\in \mathrm{G}(\Pcone_{\Gamma})).
		\]
		Taking $g=\mathbb{P}_{X^{-1}}$, we eventually obtain
		\[
		\mathrm{Det}_{\mathrm{End}}( \mathbb{P}_{X^{-1}}) = \frac{ \varphi_{\Gamma}(X)}{\varphi_{\Gamma}(X^{-1})} = \left[\varphi_{\Gamma}(X)\right]^2,
		\]
		where the latter inequality can be easily verified by $(16)_{mf}$.
	\end{proof}

	\section{Additions to Section 6$_{mf}$}
	\subsection{Real data example}
	We applied our procedure to the breast cancer data set considered in Section 7 of \cite{HL08}, see also \cite{Miller}. Following approach of H{\o}jsgaard and Lauritzen, we consider set of $p=150$ genes (designations of these genes can be read from \cite[Fig. 11]{HL08}) and $n=58$ samples with mutation in the p53 sequence. We numbered the variables alphabetically. Since $p>n$, only parsimonious models can be fitted at all. 
	
	In \cite{HL08}, the variables were standardized to have zero mean and unit variance. As the authors write, due to ``an issue of scaling of the variables'', model selection within RCOR models (a superclass of RCOP models) was performed. However, when the search is done among RCOP models, the scaling ensuring unit variances favors transitive subgroups. Recall that a cyclic subgroup is transitive if it is generated by a permutation consisting of one big cycle. Therefore we only centered the data.
	
	We run the Metropolis-Hastings algorithm (Algorithm 13$_{mf}$) with hyper-parameters $D=I_p$ and $\delta=3$ for $150\,000$ iterations.  
	Cardinality of the search space is not easy to compute, but already for $p=130$, the number of cyclic subgroups is of magnitude $10^{217}$, see OEIS sequence A051625. The cyclic subgroup with highest estimated posterior probability ($7.1\%$) is given by $\hat{\Gamma}=\left\langle \sigma^\ast\right\rangle$, where
	{\tiny \begin{align*}
			\sigma^\ast=&(1, 2, 139, 149, 61, 52, 8, 145)
			(3, 11, 9, 89, 6, 102, 120, 4)
			(5, 47, 90)
			(7,
			13,
			138,
			91,
			117,
			142,
			143,
			72,
			146,
			50,
			136,
			22,
			57,
			87,
			124,\\
			&
			114,
			84,
			30)
			(10, 99, 39, 21, 101, 26, 37, 73)
			(12, 77, 100, 133, 122)
			(14, 19, 76, 147)
			(15, 71, 127, 110)
			(16, 92, 83, 34, 140, 27, 49, 137)\\
			&
			(17, 98, 69)
			(18, 65, 134, 88, 107, 75, 108, 106, 82, 109, 123, 68)
			(20, 51, 135, 105, 38, 96, 25, 45)
			(23, 111, 24, 42, 67, 43, 131, 112)\\
			&		(31, 58, 66, 94, 81)
			(32, 33)
			(35, 93, 64, 86, 128, 148, 132, 103, 60, 150, 144, 129, 118, 70, 97, 121)
			(36, 85, 141)
			(44, 56, 119, 126, 104, \\
			&78, 79, 48)
			(46, 130, 115, 74, 116, 59, 113, 125, 95).
	\end{align*}}
	The order of $\hat{\Gamma}$ is $720$. 
	The structure constants of $\Zsp_{\hat{\Gamma}}$ are $L=21$,
	\begin{align*}
		r=(29,  1,  1,  1,  2,  8,  2,  1,  2,  2, 11,  1,  1,  6,  8,  1,  2,1,  1,  2, 13),\\
		d=k=(1, 2, 2, 2, 2, 2, 2, 2, 2, 2, 2, 2, 2, 2, 2, 2, 2, 2, 2, 2, 1),
	\end{align*}
	which imply that  $\dim\Zsp_{\hat{\Gamma}}=844$. Although the number of parameters (colors) of $\Zsp_{\hat{\Gamma}}$ is rather high, the MLE of $\Sigma$ exists in this model. Indeed, in view of Corollary 12$_{mf}$, we have
	\[
	n_0= \max_{i=1,\ldots,L}\left\{\frac{r_i d_i}{k_i}\right\} = 29
	\]
	and so $(23)_{mf}$ is satisfied. 
	
	The color pattern of the space of $p\times p$ matrices from $\mathcal{Z}_{\hat{\Gamma}}$ is depicted in Fig. \ref{pic1} (a). Entries which correspond to the same color in a figure are the same. To make the picture more readable, we renumbered the variables so that the block structure is visible. For comparison, in Fig. \ref{pic1} we present the heat map of data matrix $U$ (rows and columns are permuted in the same way as in Fig. 
	\ref{pic1}(a)). 	We can interpret this result as an indication of hidden symmetry in genes and evidence that our procedure can be used as an	exploratory tool for finding such symmetries.
	
	Finally, we carry out the heuristic procedure introduced in Section 1.2$_{mf}$ for finding an RCOP model when the true graph is not complete. We threshold the entries of the partial correlation matrix at the level $\alpha=0.15$ and obtain a connected graph with $925$ edges, see Figure \ref{pic2}. The largest clique consists of $12$ vertices.
	\begin{figure}
		\includegraphics[width=0.45\textwidth]{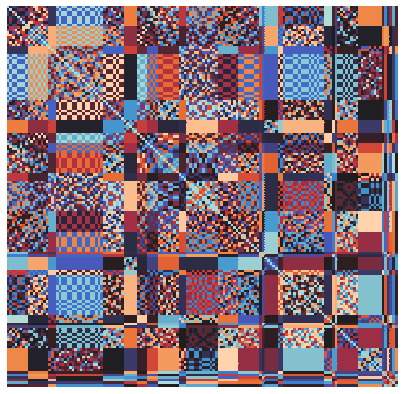}\includegraphics[height=6.23cm,width=0.5\textwidth]{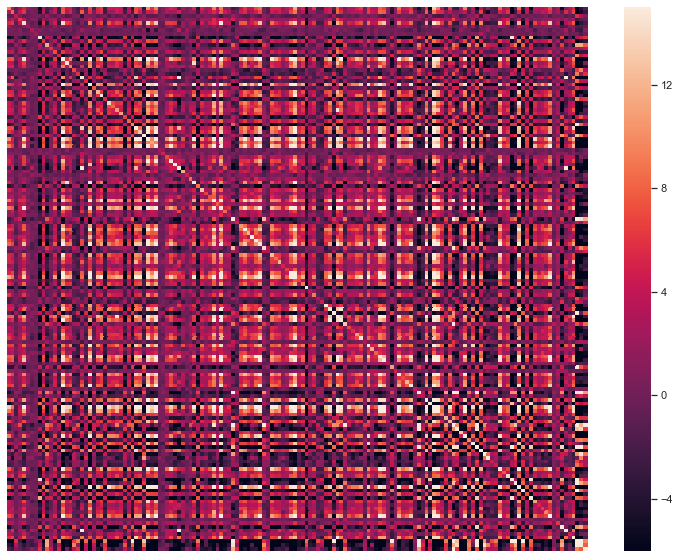}\\
		\makebox{{(a) \qquad\qquad\qquad\qquad\qquad\quad  \quad\qquad\qquad\qquad(b)}}
		\caption{(a) The color pattern of (permuted) space $\mathcal{Z}_{\hat{\Gamma}}$. (b) The heat map of (permuted) matrix $U$.}
		\label{pic1}
	\end{figure}
	
	\begin{figure}
		\includegraphics[width=\textwidth]{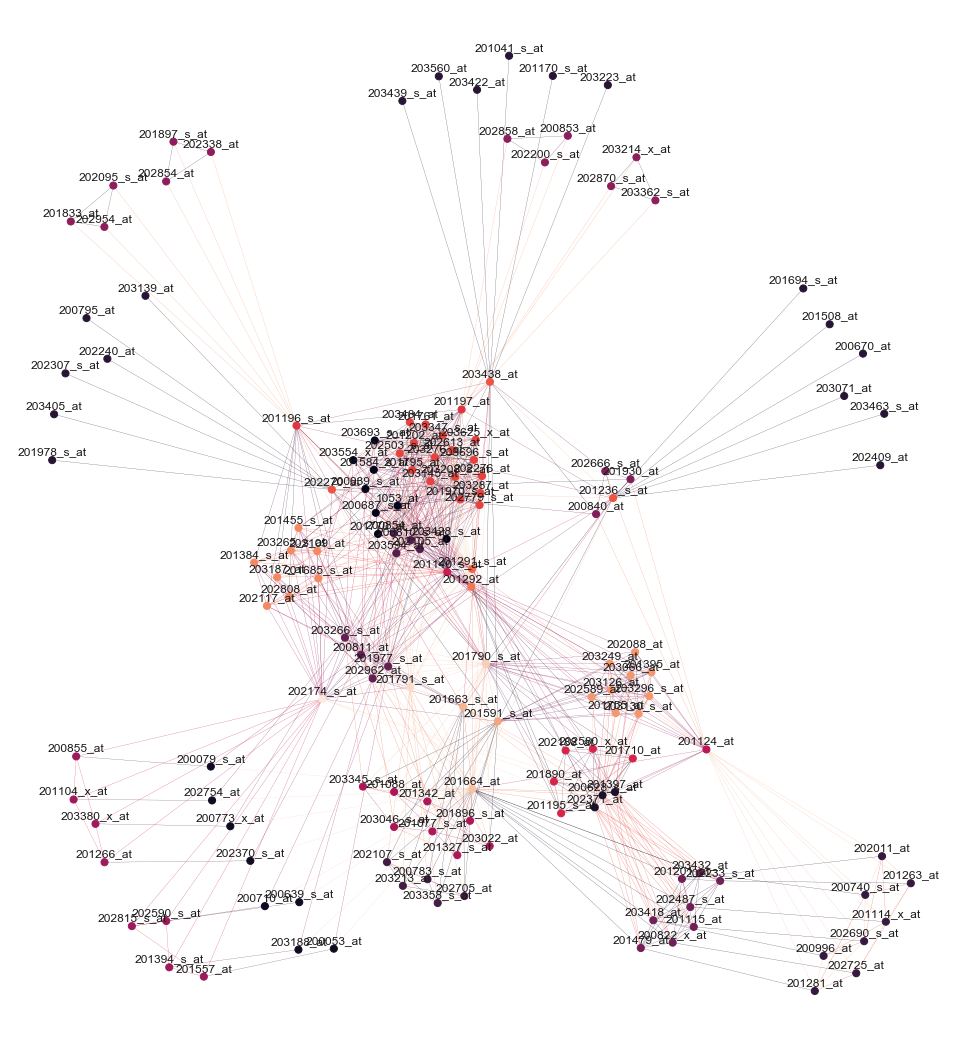}
		\caption{Graph corresponding to thresholded ($\alpha=0.15$) partial correlation matrix.}
		\label{pic2}
	\end{figure}

	\end{supplement}

%%%%%%%%%%%%%%%%%%%%%%%%%%%%%%%%%%%%%%%%%%%%%%%%%%%%%%%%%%%%%
%%                  The Bibliography                       %%
%%                                                         %%
%%  imsart-???.bst  will be used to                        %%
%%  create a .BBL file for submission.                     %%
%%                                                         %%
%%  Note that the displayed Bibliography will not          %%
%%  necessarily be rendered by Latex exactly as specified  %%
%%  in the online Instructions for Authors.                %%
%%                                                         %%
%%  MR numbers will be added by VTeX.                      %%
%%                                                         %%
%%  Use \cite{...} to cite references in text.             %%
%%                                                         %%
%%%%%%%%%%%%%%%%%%%%%%%%%%%%%%%%%%%%%%%%%%%%%%%%%%%%%%%%%%%%%

%% if your bibliography is in bibtex format, uncomment commands:
%\bibliographystyle{imsart-nameyear} % Style BST file (imsart-number.bst or imsart-nameyear.bst)
%\bibliography{Bibl}       % Bibliography file (usually '*.bib')

\end{document}